%% file: main.tex
\documentclass[reqno, 11pt]{amsart}
\input{preambule}

\usepackage{xcolor}

\title[Operations on valuations and constructible functions]{Operations on constructible functions and generalized valuations}
\author{Andreas Bernig}
\address{Institut f\"ur Mathematik, Goethe-Universit\"at Frankfurt, Robert-Mayer-Str. 10, 60054 Frankfurt, Germany}
\email{bernig@math.uni-frankfurt.de}
\author{Vadim Lebovici}
\address{Sorbonne Université, Université Paris Cité, CNRS, IMJ-PRG, F-75005 Paris, France}
\email{lebovici@imj-prg.fr}

\thanks{VL was supported in part by FSMP and in part by EPSRC EP/R018472/1. AB was supported by DFG grant BE 2484/10-1}

\begin{document}

\begin{abstract}
    Alesker's theory of generalized valuations unifies smooth measures and constructible functions on real analytic manifolds, extending classical operations on functions and measures. Alesker showed that these operations agree with the sheaf-theoretic ones on constructible functions under restrictive assumptions, leaving key aspects conjectural. In this article, we close this gap by proving that the two approaches indeed coincide on constructible functions under mild transversality assumptions. Our proof is based on a comparison with the corresponding operations on characteristic cycles. As applications, we extend additive kinematic formulas from convex bodies to compact subanalytic sets in Euclidean spaces and derive new kinematic formulas on the 3-sphere.
\end{abstract}

\maketitle

\tableofcontents

\renewcommand{\thethm}{\arabic{thm}}

\section{Introduction}
\subsection{Context}
A valuation is a finitely additive measure defined on sufficiently regular subsets of some fixed ambient space. The theory of valuations originates from Dehn's solution to Hilbert's third problem on scissors congruence of polytopes~\cite{dehn1901} and was first formalized and systematically studied by Hadwiger~\cite{Hadwiger57} for compact convex subsets of Euclidean spaces. In a series of papers, Alesker~\cite{alesker_val_man1,alesker_val_man2,alesker_val_man4,alesker_val_man3} introduced a theory of valuations on manifolds as a generalization of the classical theory of valuations from convex geometry. 
These smooth valuations encompass classical tools from integral geometry, such as smooth measures, the Euler characteristic, and intrinsic volumes. Formally, they can be described by a pair of differential forms, an $n$-form on the manifold and an $(n-1)$-form on its cosphere bundle~\cite{bernig_quat09}. 

An important breakthrough by Alesker and Fu~\cite{alesker_val_man3} is the discovery of a product structure on the space of smooth valuations which, combined with an integration functional introduced in~\cite{alesker_val_man4}, yields a perfect bilinear pairing. This so-called \emph{Alesker-Poincaré duality} implies in particular that the space~$\val$ of smooth valuations on a smooth manifold~$X$ is densely embedded into the dual~$\gval$ of the space of compactly supported smooth valuations, so that elements of this dual space can legitimately be called \emph{generalized valuations}~\cite{alesker_val_man4}. It has been shown in~\cite{BB07} that a generalized valuation can be described as a pair of currents, an $n$-current on the manifold and an $(n-1)$-current on its cosphere bundle.

Although the most general family of sets on which a smooth valuation can be evaluated is not yet clear, Alesker proved in~\cite{alesker_val_man4} that it contains the class of subanalytic sets when the manifold~$X$ is real analytic. Subanalytic sets have a tame behaviour: they can be stratified (even satisfying Whitney's conditions), they have locally finitely many connected components and so on, see \cite{hardt75, KS90}. 

Alesker has constructed an embedding with sequentially dense image~\cite{Ale17}:
\begin{equation*}
    \emCFgval:\CF \otimes \C \hookrightarrow \gval,
\end{equation*}
where~$\CF$ is the group of \emph{constructible functions} on~$X$, that is, the group of functions~$\phi:X\to \Z$ such that the sets~$\phi^{-1}(m)$ are subanalytic for all~$m\in\Z$ and the family~$\{\phi^{-1}(m)\}_{m\in \Z}$ is locally finite. This embedding is constructed using the notion of \emph{characteristic cycle}~\cite{Fu94,KS90} of a constructible function, which is an $n$-current on the cotangent bundle of the manifold that can easily be turned into a pair of currents defining a generalized valuation. 

The groups of characteristic cycles and of constructible functions on a real analytic manifold are both isomorphic to the Grothendieck group of constructible sheaves on this manifold, as shown by Kashiwara~\cite{Ka85,KS90}. Moreover, Schapira has shown in~\cite{Schapira1991} that this group isomorphism translates fundamental operations on sheaves (exterior product, pullback, pushforward, Poincaré-Verdier duality) into rich topological operations on constructible functions. Notably, the pushforward of a constructible function by a real analytic map integrates the constructible function on the fiber of the map with respect to the Euler characteristic. These operations found remarkable applications to various areas of mathematics, from the generalization of Akbulut-King's number for real semi-algebraic sets in algebraic geometry~\cite{McCP97}, to the introduction of topological integral transforms satisfying powerful injectivity theorems~\cite{curry2022many,GLM18, KP92, Sch95} used for shape description in applied geometry~\cite{CMCMR20}.

Alesker defines Poincaré-Verdier duality for smooth and generalized valuations in~\cite{alesker_val_man4} and the exterior product, the pullback and the pushforward in a follow-up paper~\cite{alesker_intgeo}. These operations generalize well-known operations from measure theory, such as the classical pushforward of smooth measures. Moreover, these operations can be combined to induce rich algebraic structures on generalized valuations. Namely, the product of generalized valuations~\cite{AB09} can be defined as the pullback of the exterior product by the diagonal embedding, and the convolution~\cite{AB17} (say, on a Euclidean space) is defined as the pushforward of the exterior product by the addition. Fu and the first author have shown in~\cite{bernig_fu06} that these operations are dual to the celebrated kinematic formulas in integral geometry~\cite{C66}, and this result was key to establishing explicit kinematic formulas on complex spaces in~\cite{BF11, bernig_fu_solanes}.

As the structure of this introduction suggests, the operations on generalized valuations were developed with the expectation that they would also generalize the operations on constructible functions coming from sheaf theory. Alesker proves this fact for Poincaré-Verdier duality, the exterior product, and for the pullback and the pushforward under rather restrictive assumptions detailed below~\cite{alesker_intgeo}.

\subsection{Contributions} 
In this article, we show that the operations on generalized valuations restrict on constructible functions to the operations coming from sheaf theory under mild transversality assumptions. We state our results below, postponing the precise definitions to relevant sections.

Since the generalized valuation associated with a constructible function is defined using characteristic cycles, proving our result reduces to translating the operations on characteristic cycles---viewed as Borel-Moore homology classes and provided in the vocabulary of microlocal sheaf theory in~\cite{KS90}---into classical operations from geometric measure theory on the currents representing generalized valuations. To do so, we use a description of cycle operations due to Schmid and Vilonen~\cite{SV96} in terms of classical operations on Borel-Moore homology classes. The key is then that characteristic cycles can be naturally considered as Borel-Moore homology classes or as currents, and that classical operations on homology classes translate into classical operations on currents under mild transversality assumptions.

\subsubsection*{Pullback and product} Our first result concerns the pullback $f^*$ of generalized valuations by a real analytic map $f$, which is defined for submersions and for immersions satisfying some transversality assumptions~\cite{alesker_intgeo}. On constructible functions, the pullback is nothing but the precomposition. We prove our result under a transversality assumption on the constructible function which implies that the pullback is well-defined in the sense of generalized valuations.
\begin{thm}\label{thm:pullback-CFgval}
    Let~$f:X\to Y$ be a real analytic map between real analytic manifolds and let~$\psi\in\CF[][Y]$. If
    \begin{enumerate}
        \item $f$ is a submersion, or
        \item $f$ is an immersion which is transverse to the strata of a Whitney stratification of~$Y$ on which~$\psi$ is constant,
    \end{enumerate}
    then the pullback~$f^*\!\emCFgval[\psi]\in\gval[X]$ is well-defined and~$f^*\!\emCFgval[\psi] = \emCFgval[f^*\psi]$.
\end{thm}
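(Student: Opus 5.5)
The strategy is to reduce everything to characteristic cycles. By construction, $\emCFgval[\psi]$ is the generalized valuation given by the pair of currents $(\psi\cdot[Y]$-type $n$-current, $\pi_*$ of the conormal part of $\mathrm{CC}(\psi)$ on the cosphere bundle$)$; more precisely, the $(n-1)$-current on $\mathbb{P}_Y$ is the image of $\mathrm{CC}(\psi)$ under the projectivization $T^*Y\setminus 0\to \mathbb{P}_Y$, and the $n$-current is $\psi\,d\mathrm{vol}$. Alesker defines $f^*$ on generalized valuations (in the submersion case, and for immersions under a wave-front-set condition) through explicit operations on this pair of currents. These operations are the geometric-measure-theoretic correspondence $\mathbb{P}_Y \xleftarrow{\;} X\times_Y \mathbb{P}_Y \supset \cdots \to \mathbb{P}_X$, i.e.\ pullback of the current along the base change followed by pushforward along $df^*$. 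On the sheaf side, Kashiwara--Schapira give $\mathrm{CC}(f^*\psi)=f^*\mathrm{CC}(\psi)$ under non-characteristic conditions. Schmid--Vilonen describe this as $(df^*)_*\circ f_\pi^{!}$ acting on Borel--Moore homology classes of conic Lagrangian cycles. So the plan is to check that the two descriptions agree as currents.

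First I would verify well-definedness. In case (1) this is automatic. In case (2), transversality of $f$ to the strata of a Whitney stratification $\mathcal S$ on which $\psi$ is constant gives $\operatorname{supp}\mathrm{CC}(\psi)\subset \bigcup_S T^*_SY$. Transversality then means $N^*X\cap T^*_SY\subset 0$ along $f$, which is exactly the condition that the wave front set of $\emCFgval[\psi]$ is disjoint from the conormal of $f$, i.e.\ Alesker's condition for $f^*$. The same condition says $f$ is non-characteristic for $\psi$, so Kashiwara--Schapira's formula $\mathrm{CC}(f^*\psi)=f^*\mathrm{CC}(\psi)$ holds. Moreover, $f^*\psi$ is constructible, with Whitney stratification $f^{-1}(\mathcal S)$.

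Next I would identify the Borel--Moore pullback with the current pullback. The map $f_\pi:X\times_Y T^*Y\to T^*Y$ is a submersion in case (1). In case (2), the cycle $\mathrm{CC}(\psi)$ is transverse to $f_\pi$ (it is a base change of a transverse immersion, and transversality follows from the conormal condition). In both cases the Gysin pullback of a subanalytic BM cycle, represented by an integration current, is the slicing/pullback current: for a submersion this is fiber-product integration, and for transverse intersections it is the intersection current with multiplicities. Then $df^*$ is proper on the support of the pulled-back cycle by the non-characteristic condition, so the homological pushforward coincides with the current pushforward $(df^*)_*$. Passing to cosphere bundles commutes with both operations away from the zero section, and the $n$-current parts agree trivially, since $\psi\circ f$ times volume is the pullback in the measure sense, or vanishes, in the immersion case, matching Alesker's definition. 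This yields $f^*\emCFgval[\psi]=\emCFgval[f^*\psi]$.

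The main obstacle is this middle step: rigorously matching Schmid--Vilonen's homological operations with flat-current operations. This includes sign and orientation conventions for $T^*X$, and the fact that Alesker's pullback formula involves a pair of currents whose $\mathbb{P}$-part is built via a specific correspondence including a boundary correction term from the zero section. I would handle it by working locally on a stratum. There $\mathrm{CC}(\psi)$ is a locally finite sum of $\pm[T^*_SY]$-type pieces, and one can check the formulas directly on conormal cycles of submanifolds, where both sides reduce to $[T^*_{f^{-1}S}X]$. Linearity and local finiteness then give the general case.
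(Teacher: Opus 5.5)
Your argument for the identity in case (ii) follows the paper's route. You take Schmid--Vilonen's formula $\CC(f^*\psi)=df_*\tau^*\CC(\psi)$ in Borel--Moore homology, promote it to an identity of subanalytic currents via Hardt's comparison results, split it along $\CC=C+s_*\cone(N)$, and transport it to the cosphere bundles.

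\textbf{The gap is your well-definedness step.} You assert that transversality of $f$ to the strata is ``exactly'' Alesker's wave-front condition. This conflates the support statement $\supp\CC(\psi)\subset\bigcup_S T^*_SY$ with statements about wave front sets. Alesker's conditions require $\WF(C_\psi)\cap T^*_XY=\emptyset$ and $\WF(N(\psi))\cap T^*_{X\times_Y\P_Y}\P_Y=\emptyset$, the latter living in $T^*\P_Y$. Only the third condition, concerning $\P_+(T^*_XY)$, follows from a support argument. For the other two you would need stratifications of $C_\psi$ and $N(\psi)$ that are both angular and transverse (to $f$, respectively to $X\times_Y\P_Y$). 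The paper explicitly says it is not known whether transversality to some stratification yields an angular stratification with the same property. The paper avoids the issue instead. Since $C_\psi$ and $N(\psi)$ are subanalytic, $f^*C_\psi$ and $\beta_*\alpha^*N(\psi)$ are defined by slicing as soon as $f$ and $\alpha$ are transverse to defining Whitney strata, which your hypothesis provides. That is the meaning of ``well-defined'' in the statement, and you should argue it that way.

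There are also some smaller errors.
\begin{itemize}
\item In the immersion case the $n$-current part does not vanish. Alesker's definition gives $C'=f^*C_\psi$, which equals $C_{f^*\psi}$; for $\psi=\1_Y$ both are $\intcur{X}$. The vanishing $C'=0$ is what happens for the pushforward by an immersion, not the pullback.
\item There is no ``boundary correction term'' in the pullback. One has $T'=\beta_*\alpha^*T$, with $\alpha,\beta$ defined on the oriented blowup $\tilde{X\times_Y\P_Y}$ along $\P_+(T^*_XY)$. What ``passing to cosphere bundles'' actually requires is this: by the non-characteristic condition, $\tau^*\cone(N(\psi))$ is supported off $T^*_XY$, and $\alpha^*N(\psi)$ stays away from the boundary of the blowup. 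Base change then gives $df_*\tau^*\cone(N(\psi))=\cone(\beta_*\alpha^*N(\psi))$.
\item Your closing reduction (check on conormal cycles of submanifolds, then conclude by linearity) is not valid. The pieces $m_\alpha[\Lambda_\alpha]$ of $\CC(\psi)$ are not cycles, so Borel--Moore operations cannot be applied to them one at a time. The multiplicities of $\CC(f^*\psi)$ are exactly what Schmid--Vilonen supplies. Once Hardt's lemmas are in place, the reduction is also unnecessary.
\end{itemize}

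For case (i), your uniform use of the Schmid--Vilonen formula, with $\tau$ a submersion and $df$ a closed embedding, is a legitimate alternative. The paper instead reduces locally to $f^*\psi=\1_{\R^m}\boxtimes\psi$ and applies the exterior-product proposition. Your route bypasses that proposition, but it needs a Borel--Moore/current comparison for pullback along submersions, whereas the paper's comparison lemma is stated only for closed embeddings.
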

This result is proven in~\cite[Props.~3.3.4 and~3.5.12]{alesker_intgeo} in the setting of indicator functions of compact submanifolds with corners. While submanifolds with corners may not be subanalytic, they satisfy regularity properties that arbitrary subanalytic subsets do not. Specifically, Alesker's proof of~\Cref{thm:pullback-CFgval} in the case of submanifolds with corners crucially relies on the local triviality of transverse intersections of submanifolds with corners. This property is simply wrong for subanalytic sets, as shown by the existence of analytic families of non-diffeomorphic singularities; see~\cite[Chap.~II, Ex.~2.1]{Gib76}. Our proof using Schmid and Vilonen's description of the operations on characteristic cycles circumvents this obstacle.

Combined with Alesker's result on the restriction of the exterior product~\cite[Claim.~2.1.11]{alesker_intgeo}, our result on the pullback implies a similar result for the restriction of the product of generalized valuations defined in~\cite{AB09}. This result holds for constructible functions which are transverse in the sense that there are two Whitney stratifications of the manifold whose strata are pairwise transverse and on which the constructible functions are respectively constant; see~\Cref{sec:product}.
\begin{cor}\label{cor:product-CFgval}
    If~$\phi,\psi\in\CF$ are transverse, then~$\emCFgval[\phi]\cdot\emCFgval[\psi] = \emCFgval[\phi\cdot\psi]$.
\end{cor}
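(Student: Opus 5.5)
The plan is to reduce the product to an exterior product followed by a pullback along the diagonal, and to apply the two known restriction results in turn. Recall from \cite{AB09} that the product of generalized valuations is defined, whenever it makes sense, by
\[
\emCFgval[\phi]\cdot\emCFgval[\psi] = \Delta^*\bigl(\emCFgval[\phi]\boxtimes\emCFgval[\psi]\bigr),
\]
where $\Delta:X\to X\times X$ is the diagonal embedding. On constructible functions we have $\phi\cdot\psi = \Delta^*(\phi\boxtimes\psi)$, with $(\phi\boxtimes\psi)(x,y)=\phi(x)\psi(y)$. The argument therefore has three steps.

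\emph{Step 1: the exterior product.} By Alesker's result \cite[Claim~2.1.11]{alesker_intgeo}, we have $\emCFgval[\phi]\boxtimes\emCFgval[\psi]=\emCFgval[\phi\boxtimes\psi]$ in $\gval[X\times X]$. Note that $\phi\boxtimes\psi$ is a constructible function on $X\times X$.

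\emph{Step 2: transversality of the diagonal.} Let $\mathcal S$ and $\mathcal T$ be Whitney stratifications of $X$ with pairwise transverse strata, on which $\phi$ and $\psi$ are respectively constant. The product stratification $\mathcal S\times\mathcal T=\{S\times T\}$ of $X\times X$ is again Whitney, since products of Whitney stratifications are Whitney and products of subanalytic sets are subanalytic. Moreover, $\phi\boxtimes\psi$ is constant on each stratum $S\times T$. We check that $\Delta$ is transverse to $S\times T$. At a point $(x,x)$ with $x\in S\cap T$, we need
\[
T_{(x,x)}\Delta(X) + T_xS\times T_xT = T_xX\times T_xX.
\]
Given $(u,v)$, decompose $u-v = a-b$ with $a\in T_xS$ and $b\in T_xT$; this is possible because $T_xS+T_xT=T_xX$. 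Then
\[
(u,v)=(u-a,\,u-a)+(a,\,v-u+a),
\]
and $v-u+a=b\in T_xT$, which proves the claim. This linear-algebra identity, together with the Whitney property of the product stratification, is essentially the only point requiring care. One also has to confirm that the transversality notion in \Cref{thm:pullback-CFgval}(2) is the one used here, and it is.

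\emph{Step 3: the pullback.} Since $\Delta$ is an immersion transverse to the strata of $\mathcal S\times\mathcal T$, \Cref{thm:pullback-CFgval}(2) shows that $\Delta^*\emCFgval[\phi\boxtimes\psi]$ is well-defined and equals $\emCFgval[\Delta^*(\phi\boxtimes\psi)]=\emCFgval[\phi\cdot\psi]$. Combining this with Step 1 and the definition of the product yields the corollary. The well-definedness also shows that the product of \cite{AB09} makes sense for this pair, since it is defined precisely as this pullback.
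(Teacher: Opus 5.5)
Your proposal is correct and is essentially the paper's argument: the paper obtains the corollary by combining the exterior product result (\Cref{prop:exterior-product-CFgval}) with part (ii) of \Cref{thm:pullback-CFgval}, applied to the diagonal embedding $\delta$. You additionally spell out what the paper leaves implicit: that the product stratification $\{S\times T\}$ is a Whitney stratification on which $\phi\boxtimes\psi$ is constant, and that $\delta$ is transverse to each $S\times T$ whenever $S$ and $T$ are transverse, which your linear-algebra decomposition verifies correctly.
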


This result relates the product on generalized valuations associated to subanalytic sets and the intersection of such sets. It is an analogue of the result known for submanifolds with corners~\cite[Thm.~5]{AB09} which proof rested, as for the pullback, on the local triviality of transverse intersections of such submanifolds.

\subsubsection*{Pushforward and convolution} The pushforward~$f_*$ of generalized valuations by a real analytic map $f$ is well-defined when~$f$ is an immersion or when~$f$ is a submersion which is proper on the support of~$\phi$ and whose differential satisfies a transversality assumption with respect to the wave front set of the normal cycle of~$\phi$; see~\cite{alesker_intgeo}. To prove our result on the pushforward, we first need to assume transversality of the differential of~$f$ to the strata defining the normal cycle. However, on the contrary to the pullback, this alone will not ensure that the pushforward is well-defined in the sense of generalized valuations. As a consequence, we also require the stratification of the normal cycle to be \emph{angular}, that is, to satisfy that the wave front set of the normal cycle is contained in the union of the conormal bundles to the strata; see~\Cref{def:transversality-current} and~\Cref{thm:pushforward-CFgval}. We show that angular stratifications of subanalytic currents always exist in~\Cref{sec:WF-subanalytic-currents} using desingularization.
\begin{thm}\label{main:pushforward}
    Let~$f:X\to Y$ be a real analytic map between real analytic manifolds and~$\phi\in\CF$. If
    \begin{enumerate}
        \item $f$ is an immersion, or
        \item $f$ is a submersion which is proper on the support of~$\phi$ and whose differential is transverse to the strata of an angular Whitney stratification defining the normal cycle of~$\phi$,
    \end{enumerate} 
    then~$f_*\phi\in\CF[][Y]$ and~$f_*\!\emCFgval[\phi]\in\gval[Y]$ are well-defined and~$f_*\!\emCFgval[\phi] = \emCFgval[f_*\phi]$. 
\end{thm}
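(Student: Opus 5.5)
We compare the operations at the level of characteristic cycles. By construction, $\emCFgval[\phi]$ is determined by the pair of currents $(\phi\cdot \mathrm{vol}\text{-part}, \mathrm{CC}(\phi))$: the characteristic cycle $\mathrm{CC}(\phi)$ is a conic Lagrangian $n$-cycle on $T^*X$ whose restriction to the cosphere bundle is the normal cycle, and whose zero-section component records $\phi$ itself. Alesker's pushforward $f_*$ on generalized valuations is defined, in terms of these currents, by the geometric measure theoretic pushforward along the correspondence
\[
T^*X \xleftarrow{\;df^*\;} X\times_Y T^*Y \xrightarrow{\;\pi\;} T^*Y,
\]
that is, pulling back the normal cycle current by $df^*$ (a slicing/intersection operation, requiring a wave front set condition) and then pushing forward by $\pi$ (requiring properness). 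On the sheaf side, Kashiwara--Schapira's formula $\mathrm{CC}(f_*\phi)=f_*\mathrm{CC}(\phi)$ holds for $f$ proper on $\operatorname{supp}\phi$, with $f_*$ on Lagrangian cycles given by the same correspondence, and Schmid--Vilonen express this as the Borel--Moore homology operations ``Gysin pullback by $df^*$, then proper pushforward by $\pi$''. So the plan is: (i) recall/verify that $f_*\phi$ is constructible (standard, $f$ proper on the support); (ii) identify both sides as the same correspondence applied to $\mathrm{CC}(\phi)$, once as currents and once as homology classes; (iii) show these agree.

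Case (1), $f$ an immersion: then $df^*$ is a submersion $X\times_Y T^*Y\to T^*X$ (a vector bundle projection with fiber the conormal of $f(X)$), so pulling back a current is always defined, with no transversality needed, and coincides with the topological pullback of Borel--Moore classes (smooth pullback by a submersion commutes with the cycle-to-current map). The map $\pi$ is a closed embedding restricted to $\operatorname{supp}$, so pushforward of currents and of BM classes agree trivially. This also gives well-definedness of $f_*\emCFgval[\phi]$, recovering Alesker's case.

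Case (2), $f$ a submersion: now $df^*$ is a closed embedding and pulling back the current $\mathrm{CC}(\phi)$ amounts to intersecting with the submanifold $df^*(X\times_YT^*Y)$. Transversality of $df$ to the strata of the Whitney stratification of $\mathrm{CC}(\phi)$ (each stratum a conormal-type piece) makes the geometric intersection of the chains transverse stratum by stratum; the angular condition ensures that $\mathrm{WF}(\mathrm{CC}(\phi))$ is contained in the union of conormals of strata, hence is disjoint from the conormal of the image of $df^*$, so the current pullback (Hörmander's product of currents) is defined. The key lemma to prove is: for a subanalytic cycle $T$ with an angular stratification transverse to a submanifold $Z$, the current-theoretic intersection $T\cap Z$ equals the subanalytic cycle obtained by intersecting each top stratum with $Z$ with induced orientation and multiplicities, and this coincides with the Gysin image of $[T]$ in BM homology. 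I would prove it by approximating $[Z]$ by smooth forms (convolution in a tubular neighbourhood), using WF disjointness for convergence, and computing the limit locally on each stratum where $T$ is a smooth oriented manifold with multiplicity transverse to $Z$; lower-dimensional strata contribute nothing by dimension count. Then properness of $\pi$ on the support follows from properness of $f$ on $\operatorname{supp}\phi$, and pushforward of currents matches BM pushforward.

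The main obstacle is this key lemma in case (2): justifying that the analytically defined pullback of the possibly very singular normal cycle current equals the homological (Schmid--Vilonen) intersection, since near singular strata local triviality fails. The angular condition is exactly what should control the WF set there; I would try first to reduce, via the desingularization used in \Cref{sec:WF-subanalytic-currents}, to pushforwards of smooth chains where transversality is clean, and then conclude by naturality of both pullbacks under proper pushforward.
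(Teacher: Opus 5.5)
\paragraph{The gap: case (2), the $n$-current component.} You assume that Alesker's $f_*[\phi]$ is obtained by running the whole characteristic cycle, zero-section part included, through $T^*X\leftarrow X\times_YT^*Y\rightarrow T^*Y$ as currents. No such description is available. For a submersion, $f_*$ is defined by duality, and the only known current formula is for the Legendrian part, $T'=\overline{\tau}_*\overline{df}^*T$ on the cosphere bundles. The component $C'$ has no explicit description (this is left open by Alesker--Bernig).

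Your key lemma cannot supply one either, because its hypothesis fails on $\CC(\phi)$. The image $df(X\times_YT^*Y)$ contains the zero section $\zero_X$. So when $\dim X>\dim Y$ it is never transverse to the zero-section strata of $\CC(\phi)$, which are present as soon as $\phi$ is non-zero on an open set. At such points $(x,0)$, the conormal $\{(0,v):0\neq v\in\ker df_x\}$ of the image lies in $\WF(\CC(\phi))$, so neither H\"ormander's criterion nor stratumwise transversality applies. Hence $df^*\CC(\phi)$ exists only as a refined Gysin class in Borel--Moore homology, not as a current. Your comparison ``once as currents, once as homology classes'' therefore breaks down exactly on the component carrying $C'$. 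This is why the hypothesis of the theorem is phrased on $N(\phi)\subset\P_X$ rather than on $\CC(\phi)\subset T^*X$.

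\paragraph{The missing step: separate the two components.}
\begin{enumerate}
\item Do the Schmid--Vilonen computation $\CC(f_*\phi)=\tau_*df^*\CC(\phi)$ in Borel--Moore homology.
\item Intersect with unit cosphere bundles. The maps $\iota_X,\iota_Y$ are transverse to conic cycles, and the square formed by $\overline{\tau},\tau,\iota,\iota_Y$ is Cartesian. Base change then gives $\iota_Y^*\tau_*df^*\CC(\phi)=\overline{\tau}_*\overline{df}^*\iota_X^*\CC(\phi)$.
\item Only now apply Hardt's comparison lemmas (transverse pullback by $\overline{df}$, proper pushforward by $\overline{\tau}$). These are exactly your key lemma in the setting where it is true, and can be quoted. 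They give the current identity $N(f_*\phi)=\overline{\tau}_*\overline{df}^*N(\phi)$, i.e.\ $T'=T''$.
\item Now $f_*[\phi]-[f_*\phi]$ has vanishing Legendrian part. Its $n$-current is therefore closed, and by the constancy theorem the difference is $\lambda\chi$ on each component of $Y$.
\item You still need $\lambda=0$. The difference is supported in $f(\supp\phi)$. When $\dim Y>0$, split $\phi=\phi_1+\phi_2$ with $f(\supp\phi_j)\neq Y$. When $Y$ is a point, compare with Alesker's computation $f_*[\phi]=(\int_X\phi\,d\chi)\,\chi$.
\end{enumerate}
None of this appears in your plan.

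\paragraph{Case (1).} Your $T^*$-level argument is fine as far as it goes, since $df$ is then a bundle projection. You still have to derive, by duality from Alesker's pullback of smooth valuations, the pair $(0,\alpha_*\beta^*T+\alpha'_*\beta'^*C)$ representing $f_*[\phi]$ (Alesker's original formula needs correcting here). You then have to check that its conification equals $\tau_*df^*\CC(\phi)$. This requires the oriented blowup along $\P_+(T^*_XY)$, because $df^{-1}(T^*X\setminus\zero)$ misses $T^*_XY$. You assert this identification rather than prove it.
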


To prove our result, we first provide a correction to a formula of~\cite{alesker_intgeo} for the expression of the pushforward of generalized valuations by immersions. In this case, the pushforward is an extension by zero and the proof does not present any serious difficulty. The interesting case is the case of submersions, where the pushforward of constructible functions is a non-trivial operation of integration along the fibers with respect to the Euler characteristic. In~\cite{alesker_intgeo}, Alesker proves \Cref{main:pushforward} under the restrictive assumption that the map~$f$ is a linear projection between Euclidean spaces and that the function~$\phi$ is an indicator function of a compact convex subset with smooth boundary of strictly positive Gaussian curvature. The proof is based on an approximation process which crucially relies on the restrictive assumptions on the subset. In contrast, our use of Schmid and Vilonen's description of operations on characteristic cycles allows us to bypass this obstacle once again.

Treating the pushforward yields a similar result for the convolution of generalized valuations defined in~\cite{AB17}. Recall that the theory of Lie groups (see for instance~\cite[Part~II, Chap.~IV, Sec.~5]{serre64}) ensures that if $G$ is a Lie group acting transitively on a smooth manifold~$X$, then~$G$ and~$X$ are naturally endowed with unique real analytic structures and the action of~$G$ on~$X$ is real analytic. In such a situation, one can define the convolution~$-*-$ (of constructible functions or of generalized valuations) as the pushforward of the exterior product by the action~$G \times X \to X$. Note that the action is a submersion by transitivity. Combined with Alesker's result on the exterior product, \Cref{main:pushforward} implies:
\begin{cor}\label{cor:convolution-CFgval}
	Let $G$ be a Lie group acting transitively on a smooth manifold~$X$. For~$\phi \in \mathrm{CF}(G)$ and~$\psi \in \mathrm{CF}(X)$, we have: 
	\begin{displaymath}
		[\phi] * [\psi] = [\phi * \psi],
	\end{displaymath}
    whenever the action~$a:G\times X \to X$ and $\phi \boxtimes \psi$ satisfy assumption (ii) of~\Cref{main:pushforward}.
\end{cor}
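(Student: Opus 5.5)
By definition, the convolution of generalized valuations is $[\phi]*[\psi] = a_*\bigl([\phi]\boxtimes[\psi]\bigr)$, and the convolution of constructible functions is $\phi*\psi = a_*(\phi\boxtimes\psi)$. Here $a:G\times X\to X$ is the action and $\boxtimes$ denotes the exterior product. The plan is to chain two identities: first replace $[\phi]\boxtimes[\psi]$ by $[\phi\boxtimes\psi]$, then push forward.

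First, I invoke Alesker's result on the restriction of the exterior product~\cite[Claim~2.1.11]{alesker_intgeo}. It gives $[\phi]\boxtimes[\psi] = [\phi\boxtimes\psi]$ in $\gval[G\times X]$, where $\phi\boxtimes\psi(g,x)=\phi(g)\psi(x)$ is a constructible function on the real analytic manifold $G\times X$. I will briefly check that it is constructible. The products of subanalytic stratifications of $G$ and $X$ give a subanalytic (even Whitney) stratification of $G\times X$ on which $\phi\boxtimes\psi$ is constant, and local finiteness is clear. Here one uses the real analytic structures on $G$ and $X$ recalled before the statement, for which $a$ is real analytic.

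Second, I apply \Cref{main:pushforward} to the real analytic map $a$ and the constructible function $\phi\boxtimes\psi$. By transitivity $a$ is a submersion. By hypothesis, $a$ and $\phi\boxtimes\psi$ satisfy assumption (ii): properness on the support, and transversality of $da$ to an angular Whitney stratification defining the normal cycle. The theorem then says that $a_*(\phi\boxtimes\psi)\in\CF[][X]$ and $a_*[\phi\boxtimes\psi]\in\gval[X]$ are well-defined and that $a_*[\phi\boxtimes\psi] = [a_*(\phi\boxtimes\psi)]$. Combining the two steps gives
\begin{displaymath}
[\phi]*[\psi] = a_*\bigl([\phi]\boxtimes[\psi]\bigr) = a_*[\phi\boxtimes\psi] = [a_*(\phi\boxtimes\psi)] = [\phi*\psi].
\end{displaymath}

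The only point needing care is that the convolution on generalized valuations must be well-defined, since it is defined as the pushforward of the exterior product. This follows from the second step: $[\phi]\boxtimes[\psi]$ equals $[\phi\boxtimes\psi]$, and \Cref{main:pushforward} guarantees that its pushforward is well-defined. I expect no serious obstacle beyond matching the definitions of convolution in~\cite{AB17} with the pushforward-of-exterior-product description used here.
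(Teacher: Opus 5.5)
Your proposal is correct and follows the paper's proof: you combine the restriction of the exterior product (\Cref{prop:exterior-product-CFgval}, i.e.\ Alesker's claim) with \Cref{main:pushforward}(ii) applied to the action $a$ and $\phi\boxtimes\psi$, and the well-definedness of both convolutions comes from hypothesis (ii). Your extra checks, that $\phi\boxtimes\psi$ is constructible and that $a$ is a submersion, are harmless details the paper leaves implicit.
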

This result relates the convolution of generalized valuations with the convolution of constructible functions, which seems to be the correct replacement for the notion of Minkowski sum.

\subsubsection*{Additive and multiplicative kinematic formulas} 

Let $V$ be an $n$-dimensional Euclidean vector space and let $G$ be a closed subgroup of $\mathrm{O}(n)$ that acts transitively on the unit sphere. We endow $G$ with the Haar probability measure and $\bar G:=G \ltimes V$ with the product of Haar probability measure and Lebesgue measure. 

The space of continuous, translation- and $G$-invariant valuations on convex bodies, to be denoted by $\Val^G$, is finite-dimensional \cite{alesker00}. We let $\mu_1,\ldots,\mu_N$ be a basis of this space. Then, for any convex bodies~$K,L\subset \R^n$ there are additive kinematic formulas:
\begin{displaymath}
		\int_G \mu_i(K + gL) dg  =\sum_{k,l} c_{k,l}^i \mu_k(K)\mu_l(L),
\end{displaymath}
with constants $c_{k,l}^i$ that can be determined (at least in some important cases) by either the template method or using algebraic integral geometry. It turns out that under our assumption on $G$, elements of~$\Val^G$ are smooth valuations, and they can be evaluated at compactly supported constructible functions. As we argued above, the correct replacement for the Minkowski sum of convex bodies is the convolution product of constructible functions. Moreover, we can use our result on the convolution to prove additive kinematic formulas for constructible functions:
\begin{thm}\label{main:additive-kinematic-formulas}
	If $\phi_1,\phi_2 \in \mathrm{CF}(V)$ are compactly supported, then with the same constants $c_{k,l}^i$ as above we have: 
	\begin{displaymath}
		\int_G \mu_i(\phi_1 * g_*\phi_2) dg  =\sum_{k,l} c_{k,l}^i \mu_k(\phi_1)\mu_l(\phi_2).
	\end{displaymath} 
\end{thm}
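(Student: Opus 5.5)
We argue by density and linearity, using the convolution result. Both sides of the claimed identity are $\Z$-bilinear in $(\phi_1,\phi_2)$. Every compactly supported constructible function on $V$ is a finite $\Z$-linear combination of indicator functions of compact subanalytic sets. Moreover, by the standard triangulation and cell-decomposition argument for subanalytic sets, we may replace these by combinations of indicator functions of compact sets which are finite unions of convex polytopes, or better of nice cells. It would be tempting to reduce directly to convex bodies, where the formula holds by hypothesis. However, the convolution $\phi_1 * g_*\phi_2$ is not bilinear-compatible with a cell decomposition in a way that makes $K+gL$ appear directly, since the Minkowski sum of unions is not the convolution. Instead we pass through Alesker--Bernig's convolution of valuations, the route the paper clearly intends.

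\textbf{Step 1: rewrite via generalized valuations.} For $\mu\in\Val^G$ smooth and $\phi$ compactly supported, $\mu(\phi)$ equals the Alesker--Poincaré pairing $\langle \mu, \emCFgval[\phi]\rangle$; we take this as the definition of evaluation. Applying \Cref{cor:convolution-CFgval} with the translation group $V$ acting on itself gives
\[
\emCFgval[\phi_1 * g_*\phi_2]=\emCFgval[\phi_1]*\emCFgval[g_*\phi_2]
\]
for almost every $g\in G$. This requires checking assumption (ii) of \Cref{main:pushforward} for $\phi_1\boxtimes g_*\phi_2$ under the addition map $V\times V\to V$. Properness on supports is automatic by compactness. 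The transversality of the differential of addition to the strata of an angular stratification of the normal cycle of $\phi_1\boxtimes g_*\phi_2$ should hold for generic $g$: the conormal directions of the product stratification are pairs $(\xi,\eta)$, the condition fails only when $\xi=\eta$ on conormals, and a Sard/Kleiman-type transversality argument for the transitive action of $G$ on the sphere shows this happens only for $g$ in a null set. We also use $\emCFgval[g_*\phi_2]=g_*\emCFgval[\phi_2]$, which is immediate from \Cref{main:pushforward}(i) or from naturality of characteristic cycles under diffeomorphisms.

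\textbf{Step 2: the kinematic formula on generalized valuations.} By Bernig--Fu, the additive kinematic operator $a:\Val^G\to\Val^G\otimes\Val^G$, $\mu_i\mapsto\sum c^i_{k,l}\mu_k\otimes\mu_l$, is dual to convolution. Concretely, for convex bodies with smooth positively curved boundary the formula in the hypothesis reads
\[
\int_G\langle\mu_i,\emCFgval[K]*g_*\emCFgval[L]\rangle\,dg=\sum c^i_{k,l}\langle\mu_k,\emCFgval[K]\rangle\langle\mu_l,\emCFgval[L]\rangle,
\]
using Alesker's case of \Cref{main:pushforward}. Both sides extend to continuous bilinear functionals in a pair of compactly supported generalized valuations $(\sigma,\tau)$. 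The left side is $\int_G\langle\mu_i,\sigma*g_*\tau\rangle\,dg$; continuity needs a uniformity argument, which we handle by averaging over $G$ first, so that $\int_G g_*\tau\,dg$ becomes a $G$-invariant object for which convolution with $\sigma$ is defined without wave-front conditions. Smooth convex bodies span a dense subspace: their indicator valuations span, via McMullen/Alesker, a sequentially dense subspace of compactly supported generalized valuations. Hence the identity holds for all pairs, in particular for $\sigma=\emCFgval[\phi_1]$ and $\tau=\emCFgval[\phi_2]$. Combined with Step 1, this yields the theorem.

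\textbf{Main obstacle.} The delicate step is making the left-hand side continuous in $(\sigma,\tau)$, since convolution of generalized valuations is only partially defined. The first attempt is to note that $\int_G \sigma * g_*\tau\,dg = \sigma*\bar\tau$ with $\bar\tau=\int_G g_*\tau\,dg$. The average $\bar\tau$ is an $O(n)$-transitive average whose normal-cycle current has wave front set spreading in all directions, and this should make the pairing with the smooth $\mu_i$ continuous. If this fails, we fall back on a direct approach: express $\mu_i(\phi)$ through the normal cycle, write the characteristic cycle of $\phi_1*g_*\phi_2$ as a fiber sum via Schmid--Vilonen, and integrate over $G$ by a coarea argument, which reproduces the bilinear structure of the constants $c^i_{k,l}$.
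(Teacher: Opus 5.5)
\textbf{Verdict: genuine gap.} Your Step~1 is essentially the paper's \Cref{prop_convolution_constructible_transitive}. Step~2, however, carries the whole argument and is only a plan; neither of its two pillars is established.

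(a) \emph{Density.} McMullen's conjecture (Alesker's irreducibility theorem) is a density statement inside the translation-invariant space $\Val$. It does not say that indicator valuations of smooth convex bodies are sequentially dense in $\mathcal V^{-\infty}_\cpt(V)$. The sequential density results that are available concern constructible functions, which is useless here, and smooth valuations. Checking the identity on smooth $\sigma,\tau$ already requires the translation-invariant machinery your proposal bypasses: the Alesker--Bernig homomorphism of \Cref{thm_ab_average} together with Bernig--Fu.

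(b) \emph{Continuity.} This is the obstacle you flag yourself, and the heuristic you offer points the wrong way. A wave front set spreading in all directions makes transversality harder, not easier. Moreover, $\bar\tau=\int_G g_*\tau\,dg$ is not smooth, because $G$ fixes the origin: for $\tau=[\1_{\{x\}}]$ with $x\neq0$, $\bar\tau$ is a nonzero generalized valuation supported on a sphere. What is true is that, since $G$ is transitive on $S^{n-1}$, the wave front set of the Legendrian part of $\bar\tau$ contains no nonzero purely vertical covectors. Hence $\sigma$ and $\bar\tau$ are transverse in the sense of \Cref{def_transversal_compact}. Even granting this, you would still have to:
\begin{itemize}
\item prove sequential continuity of $\tau\mapsto\bar\tau$ into a H\"ormander space;
\item justify the interchange $\int_G\langle\mu_i,\sigma*g_*\tau\rangle\,dg=\langle\mu_i,\sigma*\bar\tau\rangle$;
\item and you would still need the density statement from (a).
\end{itemize}
The Schmid--Vilonen/coarea fallback is not an argument. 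A smaller point: $K\times gL$ has corners, so Alesker's strictly convex case of \Cref{main:pushforward} does not give your base case; you need Step~1 for $\1_K,\1_L$ as well.

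\textbf{The missing idea.} The idea that makes any density argument unnecessary is to exploit the translation invariance of the $\mu_i$. Pass to
\[
M(\zeta)=[\mu\mapsto\mu(\zeta)]\in\Val^{-\infty}(V)\otimes\Dens(V^*).
\]
The paper proves $M(\zeta_1*\zeta_2)=M(\zeta_1)*M(\zeta_2)$ for transverse $\zeta_1,\zeta_2\in\mathcal V^{-\infty}_\cpt(V)$ (\Cref{thm_compatibility_convolution_generalized}). The proof uses the wave front estimate of \Cref{lemma_wavefront_of_average}, approximation by smooth valuations, and \Cref{thm_ab_average}. The paper also shows that $[(g_1)_*\phi_1]$ and $[(g_2)_*\phi_2]$ are transverse for almost all $(g_1,g_2)$.

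After using the $G$-invariance of $\mu_i$ to average over $G\times G$, the averages $\int_G M((g_j)_*\phi_j)\,dg_j$ are invariant under both translations and $G$. They therefore correspond to smooth elements of the finite-dimensional space $\Val^G$. This is where averaging genuinely produces smoothness, and it needs the translations, not just $G$. The Bernig--Fu identity $c^*\circ\PD^G=(\PD^G\otimes\PD^G)\circ a$ then applies directly to these smooth averages. Pairing returns
\[
a(\mu_i)(\phi_1\otimes\phi_2)=\sum_{k,l}c^i_{k,l}\,\mu_k(\phi_1)\,\mu_l(\phi_2).
\]
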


The proof of this theorem uses a map from the space of compactly supported generalized valuations to translation-invariant generalized valuations, that was considered earlier in the special cases of smooth valuations \cite{AB17} as well as polytopes \cite{bernig_faifman16}. 
One difficulty is to prove that for almost all $g\in G$, the generalized valuations associated to $\phi_1$ and $g_*\phi_2$ are transversal, so that their convolution product exists.

We note that it may be possible to prove the theorem with a more classical approach, namely using a Hadwiger type characterization of invariant valuations. Such a Hadwiger type theorem is indeed claimed  in the context of constructible functions for $G=\mathrm{SO}(n)$ \cite[Lemma 12]{baryshnikov_ghrist_wright}, but the notion of {\it conormal continuity} may be hard to check for the kinematic integral. Moreover, our approach has the advantage to work for other transitive groups, and also on the Lie groups $S^3$ and $\mathrm{SO}(3)$. These two groups are the only compact connected Lie groups of dimension $ \geq 2$ such that the space of bi-invariant smooth valuations is finite-dimensional \cite[Thm.~B]{bernig_faifman_kotrbaty}.

We state the formulas for $S^3$, the case of $\mathrm{SO}(3)$ being similar. A natural basis of the space of $\mathrm{SO}(4)$-invariant smooth valuations consists of the Crofton valuations $\nu_i$ defined for~$i\in\{0,1,2,3\}$ by
\begin{displaymath}
	\nu_i(X)=\int_{\Geod_{3-i}(S^3)} \chi(X \cap E) dE,
\end{displaymath}
where $\Geod_{3-i}(S^3)$ is the manifold of totally geodesic submanifolds of $S^3$ of dimension $(3-i)$, endowed with the $\mathrm{SO}(4)$-invariant probability measure.

\begin{thm} \label{thm_additive_s3}
	There are constants $d_{k,l}^i$ such that for~$\phi_1,\phi_2\in\CF[][S^3]$ the following multiplicative kinematic formulas hold:
	\begin{displaymath}
		\int_{\mathrm{SO}(4)} \nu_i(\phi_1 * g_* \phi_2) dg  =\sum_{k,l} d_{k,l}^i \nu_k(\phi_1) \nu_l(\phi_2).
	\end{displaymath}
Setting $m(\nu_i)=\sum d_{k,l}^i \nu_k \otimes \nu_l$, we obtain the following explicit formulas 
	\begin{align*}
		m(\nu_0) & = \nu_0 \otimes \nu_0,\\
		m(\nu_1) & = \nu_0 \otimes \nu_1+\nu_1 \otimes \nu_0-(\nu_1 \otimes \nu_2+\nu_2 \otimes \nu_1)+2(\nu_2 \otimes \nu_3+\nu_3 \otimes \nu_2),\\
		m(\nu_2) & =\nu_0 \otimes \nu_2+\nu_2 \otimes \nu_0 +\frac{\pi^2}{8} \nu_1 \otimes \nu_1-\frac{\pi^2}{4} (\nu_1 \otimes \nu_3+\nu_3 \otimes \nu_1)\\
		& \quad -2 \nu_2 \otimes \nu_2+\frac{\pi^2}{2} \nu_3 \otimes \nu_3,\\
		m(\nu_3) & = \nu_0 \otimes \nu_3 + \nu_3 \otimes \nu_0 +\frac12 (\nu_1 \otimes \nu_2+\nu_2 \otimes \nu_1) -(\nu_2 \otimes \nu_3+\nu_3 \otimes \nu_2).
	\end{align*}
\end{thm}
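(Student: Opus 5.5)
Our plan is to deduce the formula from \Cref{cor:convolution-CFgval} together with the structure of $\mathrm{SO}(4)$-invariant valuations on $S^3$, in parallel with the proof of \Cref{main:additive-kinematic-formulas}. Since $S^3$ is the Lie group of unit quaternions, acting on itself by left multiplication, the convolution $\phi_1 * g_*\phi_2$ is defined as $a_*(\phi_1\boxtimes g_*\phi_2)$ with $a$ the group multiplication. As a first step we show that for almost every $g\in\mathrm{SO}(4)$ the pair $(a,\phi_1\boxtimes g_*\phi_2)$ satisfies assumption (ii) of \Cref{main:pushforward}. Properness is automatic by compactness. For transversality we fix an angular Whitney stratification defining the normal cycle of $\phi_1\boxtimes\phi_2$; this exists by \Cref{sec:WF-subanalytic-currents}. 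Then we apply a parametric transversality (Sard-type) argument: the family $g\mapsto a\circ(\mathrm{id}\times g)$ is a submersion in $g$ on the relevant conormal data, because $\mathrm{SO}(4)$ acts transitively on the cosphere bundle of $S^3$. Hence the bad set of $g$ is a subanalytic set of positive codimension, and so has measure zero. With this, \Cref{cor:convolution-CFgval} gives $[\phi_1 * g_*\phi_2]=[\phi_1]*[g_*\phi_2]$ for almost all $g$.

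Next we pass to valuations. Since $\nu_i$ is smooth, pairing it with the generalized valuation gives $\nu_i(\phi_1 * g_*\phi_2)=\langle \nu_i,[\phi_1]*[g_*\phi_2]\rangle$. Integrating over $g$ and using that the averaging $\int_{\mathrm{SO}(4)} g_*(\cdot)\,dg$ is continuous from generalized valuations into the finite-dimensional space $\mathcal V^{\mathrm{SO}(4)}$ (by \cite[Thm.~B]{bernig_faifman_kotrbaty}), the left-hand side becomes a bilinear expression $B([\phi_1],[\phi_2])$. We then show that $B$ factors through the averages of $[\phi_1]$ and $[\phi_2]$, using the invariance of the convolution and of the Haar measure. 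Since the $\nu_k$ form a basis, and the pairing of $\mathcal V^{\mathrm{SO}(4)}$ with $\CF[][S^3]$ is detected by the values $\nu_k(\phi)$ via Alesker--Poincaré duality restricted to invariants, $B$ is a bilinear form in $(\nu_k(\phi_1))_k$ and $(\nu_l(\phi_2))_l$. This yields the existence of the constants $d^i_{k,l}$. They agree with those for smooth valuations by density of $\CF[][S^3]\otimes\C$ in $\gval[S^3]$, although continuity of the convolution in this topology must be checked on transversal pairs.

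To compute the constants, we test on simple sets, such as geodesic balls and totally geodesic subspheres $S^j$, where both sides are computable. The convolution $\mathbb 1_{A} * g_*\mathbb 1_{B}$ is the Euler-characteristic integral over fibers of multiplication. For subgroups or cosets it is explicit: for instance $S^1\cdot gS^1$ generically covers a torus or all of $S^3$. We then evaluate $\nu_i$ via Crofton integrals. Alternatively, we can use the known Alesker--Fu product and convolution algebra structure on $\mathcal V^{\mathrm{SO}(4)}(S^3)$ and dualize via the Poincaré pairing. The constant $\nu_0=\chi$ gives $m(\nu_0)=\nu_0\otimes\nu_0$ immediately. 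We expect the main obstacle to be the almost-everywhere transversality in the first step, especially ensuring that the angular stratification condition survives for generic $g$, rather than the computation itself.
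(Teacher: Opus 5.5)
Your existence argument follows the paper's route: almost-everywhere transversality via Thom's theorem, then averaging \emph{both} factors into $\mathrm{SO}(4)$-invariant, hence smooth, valuations, and using that $\mathrm{PD}^G$ is an isomorphism. However, the second half of the statement, the explicit table, is not proved. Apart from $m(\nu_0)=\nu_0\otimes\nu_0$, you only list candidate test sets. Your one example is also wrong: $S^1\cdot gS^1$ is the image of a $2$-torus under multiplication, so it has dimension at most $2$ and never covers $S^3$.

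The missing idea is a test family for which the left-hand side does not depend on $g$. Take $\phi_1=\mathbf 1_{B(1,r)}$ and $\phi_2=\mathbf 1_{B(1,s)}$ with $r+s<\frac{\pi}{2}$. By bi-invariance of the round metric, $B(1,r)\cdot B(p,s)=B(p,r+s)$. Each fibre of the multiplication is an intersection of two geodesic balls of radius $<\frac{\pi}{2}$, hence convex with Euler characteristic $1$. So $\phi_1*g_*\phi_2=\mathbf 1_{B(g\cdot 1,r+s)}$ for \emph{every} $g$, and the kinematic formula becomes the addition law
\[
f_i(r+s)=\sum_{k,l}d^i_{k,l}\,f_k(r)f_l(s),\qquad f_i(r):=\nu_i(B(1,r)).
\]
Here the $f_i$ are tube volumes: $f_0=1$, $f_1=\frac{2(r+\sin r\cos r)}{\pi}$, $f_2=\sin^2 r$, $f_3=\frac{r-\sin r\cos r}{\pi}$. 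Substitute $r=\frac{\pi}{4}(f_1+2f_3)$, $\sin r\cos r=\frac{\pi}{4}(f_1-2f_3)$ and $\cos^2 r=1-f_2$ into the addition theorems for $\sin$ and $\cos$. This yields exactly the stated formulas, and linear independence of $1,r,\sin 2r,\cos 2r$ makes the $d^i_{k,l}$ unique. Dualizing the known convolution algebra, as you suggest alternatively, would also work, but it has to actually be carried out.

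Your transversality step also needs a correction. Use left-trivialized coordinates $(h_1,h_2,\cos\psi\, v_1,\sin\psi\, v_2)$ on $\P_{S^3\times S^3}\setminus(\mathcal M_1\cup\mathcal M_2)$. The fibre product $X\times_Y\P_Y$ of the multiplication lies in $\{\psi=\frac{\pi}{4}\}$, because both components of $da^*\xi$ have norm $|\xi|$. An isometry $g$ acting on one factor preserves $\psi$. Hence transitivity of $\mathrm{SO}(4)$ on $\P_{S^3}$ never supplies the $\partial_\psi$ direction.

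With an arbitrary angular stratification this causes real failures. A stratum contained in $\{\psi=\frac{\pi}{4}\}$ can occur, since angularity survives any refinement. Such a stratum fails to be transverse to $X\times_Y\P_Y$ wherever it meets it, for every $g$. You must instead use strata invariant under $[\xi_1:\xi_2]\mapsto[\xi_1:\lambda\xi_2]$. These are the join strata built from angular stratifications of $N(\phi_1)$ and $N(\phi_2)$, which are angular by the product lemma.

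This is what the paper does. It also moves both factors by $(g_1,g_2)$, so that the map $\Xi$ is a submersion. Transversality then holds directly for the pairs $((g_1)_*\phi_1,(g_2)_*\phi_2)$ that appear once you average both factors.
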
 

\subsection{Plan of the paper.}
\Cref{sec:preliminaries} contains our notations and some preliminaries on Borel-Moore homology, currents, constructible functions, characteristic cycles and generalized valuations. In~\Cref{sec:WF-subanalytic-currents}, we define angular stratifications of subanalytic currents and prove their existence. For the sake of completeness, we prove in~\Cref{sec:exterior-product} that the exterior product of generalized valuations restricts to the exterior product of constructible functions~\cite[Claim~2.1.11]{alesker_intgeo}. In~\Cref{sec:pullback}, we recall the definition of the pullback of generalized valuations and prove~\Cref{thm:pullback-CFgval} and~\Cref{cor:product-CFgval}. In~\Cref{sec:pushforward}, we recall the definition of the pushforward of generalized valuations and prove~\Cref{main:pushforward} and~\Cref{cor:convolution-CFgval}. In~\Cref{sec:kinematic-formulas}, we prove additive kinematic formulas for constructible functions on flat spaces and multiplicative kinematic formulas for constructible functions on~$S^3$. 

\subsection*{Acknowledgements}

The first named author wishes to thank the IHES for the kind and productive atmosphere during his research stay in 2024, where large parts of this work have been worked out. He also wants to thank Dan Abramovich, Semyon Alesker and Thomas Wannerer for useful discussions. The second named author wishes to thank Andre Belotto da Silva, Antoine Commaret, Jean-Marc Delort, Pierre Schapira and Shu Shen for useful discussions.

\setcounter{thm}{0}
\renewcommand{\thethm}{\thesection.\arabic{thm}}

\section{Preliminaries}\label{sec:preliminaries}
In this section, we introduce our notations and recall known definitions and results used throughout the text.

\subsection{Notations}
Unless explicitly stated otherwise, manifolds are smooth and without boundary.  We use the following conventions.
\begin{itemize}
    \item If~$V$ is a vector space and~$V^*$ its dual space, then for all~$\alpha\in V^*$ and~$x\in V$ we denote the natural pairing by~$\dualdot{\alpha, x} = \langle x,\alpha\rangle = \alpha(x)$.
    \item $X^n$ for a smooth manifold $X$ of dimension~$n$.
    \item If $X$ is a smooth manifold and $Y$ a submanifold of $X$, we denote by~$T^*_Y X \subset T^*X$ the conormal bundle of $Y$.
    \item If~$f:X\to Y$ is a smooth map between smooth manifolds, we consider the following classical diagram:
    \begin{equation}\label{eq:df-diag}
        \begin{tikzcd}
            T^*X & X \times_Y T^*Y \arrow[l, "\ df"'] \arrow[r, "\tau"] & T^*Y
        \end{tikzcd}.
    \end{equation}
    \item Given a real vector bundle~$E$ over~$X$, we denote by~$\zero_X$ or simply~$\zero$ its zero section. Moreover, we denote by~$\P_+(E)$ the oriented projectivization of~$E$, that is, the quotient~$(E\setminus \zero_X)/\R_{>0}$ where~$\R_{>0}$ is the group of positive real numbers acting by multiplication on the fibers, i.e., by~$(p,\xi)\mapsto (p,\lambda\xi)$ for any~$\lambda\in\R_{>0}$ and~$(p,\xi)\in E\setminus \zero_X$. We denote by~$(p,[\xi])$ the equivalence class of $(p,\xi)$. 
    \item When~$E$ is the cotangent bundle~$T^*X$ of a smooth manifold~$X$, we simply denote by~$\P_X = \P_+(T^*X)$ the cosphere bundle of~$X$ and by~$\pi_X$ the canonical map~$\P_X \to X$.
    \item The space of complex-valued differential $k$-forms on~$X$ is denoted by~$\Omega^k(X)$.  
    \item The indicator function of a subset~$S\subset X$ is denoted by~$\1_S$. 
    \item The interior of~$S$ inside~$X$ is denoted by~$\Int(S)$.
    \item When dealing with real analytic manifolds, stratifications are always assumed subanalytic.
\end{itemize}

\subsection{Orientations.} 
To keep the notation as simple as possible, we will assume that our manifolds are oriented. Without the choice of an orientation, the normal cycle, the characteristic cycle, and the differential forms on $X$ and~$\P_X$ have to be twisted by a section of the orientation bundle of $X$. With this modification, all formulas and statements remain true in the non-oriented case.

As in~\cite{alesker_intgeo,AB09}, we use the following conventions on orientations. 

\subsubsection*{Boundary.} If $X$ is an oriented manifold with boundary, then its boundary~$\partial X$ is endowed with the Stokes orientation~\cite[Sec.~3.2]{GP10}. 

\subsubsection*{Quotient.} Let~$G$ be an oriented Lie group acting smoothly, freely and properly on an oriented manifold~$X$. Assume further that the action of~$G$ on~$X$ is orientation-preserving. Then, the quotient manifold~$X/G$ is orientable. We endow~$X/G$ with the following orientation. For any~$p\in X$, we have a canonical isomorphism~$T_pX \cong T_p(G\cdot p)\oplus T_{\pi(p)}(X/G)$. We use this as a convention for~$X/G$, namely~$\mathrm{or}(T_p(G\cdot p))\wedge \mathrm{or}(T_{\pi(p)}(X/G)) = \mathrm{or}(T_p X)$. For instance, the unit sphere can be oriented as the boundary of the unit ball or as the quotient of the Euclidean space minus the origin by the action of the multiplicative group of positive real numbers. Using the above conventions, these two orientations coincide. 

\subsubsection*{Intersection.} Let~$X$ and~$Y$ be oriented submanifolds of an oriented manifold~$Z$. If the intersection~$X\cap Y$ is transverse then it is induced an orientation as the preimage of~$X$ by the embedding~$Y\hookrightarrow Z$ following the conventions of~\cite[Sec.~3.2]{GP10}. This convention on orientations coincide with the one of \cite[Ex.~11.12]{Bre93} which we recall here for the sake of clarity. Let~$z \in X\cap Y$. Choose a basis of~$T_z(X\cap Y)$. Complete this basis to a positive basis of~$T_zY$. Then, add vectors of~$T_zX$ so that the chosen basis of~$T_z(X\cap Y)$ followed by these vectors is a positive basis of~$T_zX$. By transversality, the so obtained collection of tangent vectors is a basis of~$T_zZ$. If it is positively oriented, give~$X\cap Y$ the orientation given by the basis chosen in the first place. Otherwise, give it the opposite orientation.  

\subsubsection*{Fiber product.} Let $f:X \to Z$ and $g:Y\to Z$ be two smooth maps between oriented manifolds where at most one of~$X$ and~$Y$ may have a boundary and~$Z$ is boundaryless. Assume further that~$f\times g$ and~$(f\times g)|_{\partial(X\times Y)}$ is transversal to the diagonal~$\Delta_Z\subset Z\times Z$. Then, the \emph{fiber product} defined as~$X\times_Z Y = (f\times g)^{-1}(\Delta_Z)$ is an orientable smooth manifold endowed with the preimage orientation~\cite[Sec.~3.2]{GP10}. Informally, our conventions are such that~$\mathrm{or}(Z)\wedge \mathrm{or}(X\times_Z Y) = \mathrm{or}(X)\wedge \mathrm{or}(Y)$. These conventions agree for instance with \cite[Sec.~7.2]{abbaspour2022morse} and constitutes an ``associative'' choice of orientations when considering successive fiber products. 
In particular, when~$g:Y\to Z$ is a fiber bundle with oriented base~$Z$, oriented fiber~$F$, and total space~$Y$ oriented by the local product orientations~$U\times F$ with~$U\subset Z$ open, then the orientation induced on the pullback bundle~$X\times_Z Y$ is also given by the local product orientations of~$V\times F$ with~$V\subset X$ open.

\subsubsection*{Normal bundle.} If~$Y$ is a submanifold of an oriented manifold~$X$, then the conormal bundle~$T_Y^*X$ is oriented as any open tubular neighborhood of~$Y$ in~$X$. This correctly defines an orientation of $T_Y^*X$ as the induced orientation does not depend on the choice of auxiliary Riemannian metric used to construct tubular neighborhoods. 

If $Y$ is oriented, our choice of orientation of~$T^*_YX$ and our conventions on the orientation of fiber bundles induce an orientation of the fibers of~$T_Y^*X$. These fibers are equipped with an action of~$\R_{>0}$ which preserves the orientation, yielding an orientation on the fibers of~$\P_+(T^*_YX)$ and hence of the total space~$\P_+(T^*_YX)$.

\subsubsection*{Oriented blowup.} In the previous setting, the definition of the oriented blow\-up~$\tilde{X}$ of~$X$ along~$Y$ is recalled in \cite[Sec.~1.2]{alesker_intgeo}. As in loc. cit., we endow~$\tilde{X}$ with the orientation satisfying that the canonical embedding~$X\setminus Y \hookrightarrow \tilde{X}$ is orientation-preserving. 

\subsection{Currents}\label{sec:currents} 
Let $X$ be an $n$-dimensional manifold (possibly with boundary). The space~$\Omega_\cpt^k(X)$ of smooth and compactly supported $k$-forms on~$X$ has a natural topology and its dual space is the space~$\mathcal D_k(X)$ of $k$-currents on~$X$. If~$T \in \mathcal D_k(X)$ and~$\omega \in \Omega^k_\cpt(X)$, we write~$\langle T,\omega\rangle$ or~$\int_T \omega$ for the natural pairing. The latter notation is motivated by the fact that an oriented $k$-dimensional closed submanifold~$Y \subset X$ defines a current~$\intcur{Y}$ by~$\omega \mapsto \int_Y \omega$. The boundary~$\partial T$ of a current~$T$ is defined by~$\int_{\partial T}\omega=\int_T d\omega$. Clearly~$\partial \intcur{Y}=\intcur{\partial Y}$ by Stokes' theorem. A current~$T$ is called a \emph{cycle} if~$\partial T=0$. The support of a current is defined in the obvious way. 

The pullback and pushforward of currents are dual to the operations of pushforward and pullback of differential forms respectively, whenever the latter ones are defined. In particular, we can define the pushforward of a current by a map $f:X \to Y$ which is proper on the support of the current and the pullback by any fiber bundle map~$f:X^n\to Y^m$. More precisely, if~$\omega \in \Omega_\cpt^k(X)$, the \emph{pushforward} $f_*\omega \in \Omega_\cpt^{k+m-n}(Y)$, also called \emph{fiber integration}~\cite{BT82}, is uniquely defined by:
\begin{displaymath}
    \int_Y \alpha \wedge f_*\omega =\int_X f^*\alpha \wedge \omega , \quad \alpha \in \Omega^{n-k}(Y).
\end{displaymath}
In that case, the pullback of a current~$T\in\cur{k}[Y]$ is given by~$\dualdot{f^*T,\omega} = \dualdot{T,f_*\omega}$ for any~$\omega\in\forms[X][\cpt]{k+n-m}$.

More generally, the pullback of a current~$T$ by a smooth map is defined under some conditions on a closed conical subset~$\WF(T)\subset T^*X\setminus \zero_X$ associated to~$T$ called its \emph{wave front set} \cite{Hor03,Sato71,KKS73}. Here, we do not recall the definition of wave front sets, but only a few key results used throughout the present article, referring to \cite[Chap.~8]{Hor03} and~\cite[Sec.~2.2]{bernig_faifman16} for more details. 
If~$\Gamma\subset T^*X\setminus \zero_X$ is a closed conical subset, we denote by~$\cur{k,\Gamma}[X]$ the space of~$k$-current over~$X$ such that~$\WF(T) \subset \Gamma$. Recall~\eqref{eq:df-diag} and denote similarly~$\left.df\right|_{\del X} \colon (\del X) \times_Y T^*Y\to T^*(\del X)$.
\begin{prop}[\textnormal{\cite[Thm.~8.2.4]{Hor03}}]\label{prop:pullback-cur}
    Let~$f\colon X^n \to Y^m$ be a smooth map between oriented smooth manifolds, where~$X$ is possibly with boundary and~$Y$ is without boundary. Let~$\Gamma \subset T^*Y\setminus \zero_X$ be a closed conical subset such that:
    \begin{enumerate}[leftmargin=1cm]
        \item\label{itm:ass-pullback-inside} for all~$(x,\eta)\in X\times_Y T^*Y$ with~$\tau(x,\eta)\in\Gamma$, one has~$df^*\eta\neq 0$;
        \vspace{0.2em}
        \item\label{itm:ass-pullback-bdry} for all~$(x,\eta)\in \del X\times_Y T^*Y$ with~$\tau(x,\eta)\in\Gamma$, one has~$(df|_{\del X})^*\eta\neq 0$.
    \end{enumerate}
    \vspace{0.1em}
    Then, denoting~$f^*\Gamma := df\left(\tau^{-1}\left(\Gamma\right)\right)$ and~$\Gamma' = \left(f^*\Gamma + T^*_{\del X}X\right)\setminus \zero_X$, there exists a unique sequentially continuous map
    \begin{equation*}
        f^* \colon \cur{k,\Gamma}[Y] \to \cur{k+n-m, \Gamma'}[X]
    \end{equation*}
    extending the pullback of smooth forms, called the \emph{pullback} of currents.
\end{prop}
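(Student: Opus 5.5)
This statement is cited from Hörmander (Thm.~8.2.4), extended to manifolds with boundary, so my plan is to reduce it to the boundaryless version of that theorem.

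\emph{Local reduction.} Since the statement is local on~$X$ and~$Y$, use partitions of unity and charts to reduce to~$Y$ an open subset of~$\R^m$ and~$X$ an open subset of~$\R^n$ or of a half-space. A $k$-current on~$Y$ is a vector of distributions indexed by multi-indices, and its wave front set is the union of theirs. Pullback of forms acts componentwise through the coefficients of~$f^*(dy_I)$, which are smooth. So it suffices to pull back distributions and then multiply by smooth forms, which does not enlarge the wave front set. The degree shift $k\mapsto k+n-m$ is then just the identification of currents with form-valued distributions, i.e.\ with the orientation/density convention used for~$\mathcal D_k$.

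\emph{Interior.} On the interior of~$X$, condition~\ref{itm:ass-pullback-inside} is exactly Hörmander's hypothesis $N_f\cap\Gamma=\emptyset$. His Thm.~8.2.4 then gives a unique sequentially continuous extension of~$f^*$ from smooth forms to~$\cur{k,\Gamma}[Y]$, with wave front set contained in~$f^*\Gamma$.

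\emph{Boundary (main obstacle).} Here I would extend~$X$ across its boundary to an open manifold~$\tilde X\supset X$ and extend~$f$ smoothly to~$\tilde f$. Condition~\ref{itm:ass-pullback-bdry} says $(df|_{\del X})^*\eta\ne0$, which is stronger than $df^*\eta\ne0$ and is an open condition. So after shrinking~$\tilde X$ to a neighbourhood of~$X$, $\tilde f$ still satisfies the hypothesis, and $\tilde f^*T$ is defined on~$\tilde X$ with wave front set in~$\tilde f^*\Gamma$.

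One then sets $f^*T:=\tilde f^*T\cdot\1_X$, i.e.\ restricts to~$X$ by multiplying with the indicator of the half-space. This product is defined because $\WF(\1_X)\subset T^*_{\del X}\tilde X$. Multiplication of distributions needs $\WF(\tilde f^*T)\cap(-T^*_{\del X}\tilde X)=\emptyset$ over~$\del X$. That transversality is precisely the restriction $\tilde f|_{\del X}$ satisfying Hörmander's condition, which is assumption~\ref{itm:ass-pullback-bdry}. Hörmander's Thm.~8.2.10 then gives $\WF(f^*T)\subset (f^*\Gamma+T^*_{\del X}X)\setminus\zero$, which is~$\Gamma'$.

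\emph{Continuity and uniqueness.} Sequential continuity in the Hörmander topology on~$\cur{k,\Gamma}$ follows from the continuity statements in Thms.~8.2.4 and~8.2.10. Uniqueness follows from the sequential density of smooth forms in~$\cur{k,\Gamma}$ (Hörmander, Thm.~8.2.3). Independence of the choice of extension~$\tilde X$ follows from this uniqueness, since any two choices agree on smooth forms.
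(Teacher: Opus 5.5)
The paper gives no proof of this statement. It only cites H\"ormander's Thm.~8.2.4, which covers manifolds without boundary, and leaves the boundary version (hypothesis~(ii) and the term $T^*_{\del X}X$ in $\Gamma'$) implicit. Your argument supplies that missing step: extend $f$ to $\tilde f\colon\tilde X\to Y$ on a collar, pull back there by Thm.~8.2.4, then cut off by $\1_X$ using Thm.~8.2.10. The argument is correct.

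Its advantage is that it makes the hypotheses transparent. Since $\WF(\1_X)=T^*_{\del X}\tilde X\setminus\zero$, the product $\tilde f^*T\cdot\1_X$ is defined on all of $\mathcal D_{k,\Gamma}(Y)$ as soon as $df^*\eta$ never lies in the conormal of $\del X$. That condition is exactly $(df|_{\del X})^*\eta\neq 0$, i.e.\ hypothesis~(ii). The wave front bound for products then yields precisely $\Gamma'$.

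Two points should be tightened.
\begin{enumerate}
\item The shrinking of $\tilde X$ uses only hypothesis~(i) on all of $X$ (boundary points included), closedness of $\Gamma$, and compactness of the cosphere fibres. Hypothesis~(ii) plays no role there, so your remark about it being an open condition is beside the point.
\item Currents on $X$ are defined in the paper as functionals on forms smooth up to $\del X$. You should therefore check that $\tilde f^*T\cdot\1_X$ induces such a functional, i.e.\ that its pairing with a compactly supported test form $\psi$ on $\tilde X$ vanishes whenever $\psi|_X=0$. This holds because $\1_X\psi=0$ and $(\tilde f^*T\cdot\1_X)\psi=\tilde f^*T\cdot(\1_X\psi)$ (associativity with a smooth factor, checked by density).
\end{enumerate}
With these additions, sequential continuity, uniqueness via sequential density (Thm.~8.2.3), and independence of the extension $\tilde f$ go through as you describe.
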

The \emph{intersection}~$S\cap T$ of two currents~$S$ and~$T$ on~$X$ is then defined as the pullback of the exterior product of two currents on~$X$ by the diagonal embedding~$\delta:X\hookrightarrow X\times X$ whenever it is well-defined. 

We say that the diagram of oriented smooth manifolds:
\begin{equation}\label{eq:cartesian-square}
   \begin{tikzcd}
       W \arrow[d, "h"'] \arrow[r, "h'"] & Y \arrow[d, "g"] \\
       X \arrow[r, "f"]                            & Z               
   \end{tikzcd}
\end{equation}
is a \emph{Cartesian square} if~$W\cong X\times_Z Y$ as oriented manifolds. In such a situation, we have the following base change formula:
\begin{lem}[{\cite[Lem.~1.10]{Fu90}}]
    Let~$g:Y\to Z$ be a fiber bundle and consider a Cartesian square as in~\eqref{eq:cartesian-square}. If~$f$ is proper on the support of~$T\in\cur{k}[X]$, then~$g^*f_*T= h'_* h^* T$.
\end{lem}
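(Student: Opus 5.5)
The identity $g^*f_*T=h'_*h^*T$ is an equality of currents on $Y$, so I will prove it by pairing both sides with an arbitrary test form $\omega\in\Omega^{\ast}_\cpt(Y)$ of the appropriate degree and unwinding the definitions. Both sides are well defined. Since $g$ is a fiber bundle, $g^*$ is the transpose of fiber integration $g_*$. Since $f$ is proper on $\operatorname{supp}T$, the map $h'$ is proper on $\operatorname{supp}h^*T\subset h^{-1}(\operatorname{supp}T)$, because $W\cong X\times_Z Y$ and $g$ has compact fibers locally over compacta only after restricting. To be careful here, I would work with $\omega$ compactly supported and check that $h^{-1}(\operatorname{supp}T)\cap h'^{-1}(\operatorname{supp}\omega)$ is compact. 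It is a closed subset of $(\operatorname{supp}T\cap f^{-1}(g(\operatorname{supp}\omega)))\times\operatorname{supp}\omega$, and the first factor is compact by properness of $f$ on $\operatorname{supp}T$. Also, $h:W\to X$ is a fiber bundle, being the pullback of $g$, so $h^*$ is defined as the transpose of $h_*$.

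Unwinding gives
\[
\langle g^*f_*T,\omega\rangle=\langle f_*T,g_*\omega\rangle=\langle T,f^*g_*\omega\rangle,
\qquad
\langle h'_*h^*T,\omega\rangle=\langle h^*T,h'^*\omega\rangle=\langle T,h_*h'^*\omega\rangle.
\]
Here $f^*g_*\omega$ and $h_*h'^*\omega$ are only cut off by the support of $T$, which is harmless by the properness discussion. The statement therefore reduces to the base change formula for fiber integration of forms, $f^*g_*\omega=h_*h'^*\omega$ on $X$.

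The main step is proving this form identity, including signs. The identity is local on $X$ and $Z$, so I would choose trivializations $g^{-1}(U)\cong U\times F$ with the product orientation. By the stated convention, the induced orientation on $W|_{f^{-1}(U)}$ is the product orientation $f^{-1}(U)\times F$, and $h'=f\times\mathrm{id}_F$. Using a partition of unity, it suffices to treat $\omega=a\wedge b$ with $a$ pulled back from $U$ and $b$ from $F$ (compactly supported). Then $g_*\omega=\pm(\int_F b)\,a$ with the sign fixed by degrees in the defining identity $\int\alpha\wedge g_*\omega=\int g^*\alpha\wedge\omega$. The same computation with $f^*a$ in place of $a$ gives $h_*h'^*\omega$ with the identical sign, and $f^*$ commutes with multiplication by the constant $\int_F b$. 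The expected obstacle is purely bookkeeping: checking that the orientation convention on the fiber product matches the product orientation used to compute fiber integration on both sides, so that no extra sign appears. The convention stated just before the lemma is precisely what guarantees this.
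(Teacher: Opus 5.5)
Your argument is correct. The paper gives no proof of its own here; it simply quotes \cite[Lem.~1.10]{Fu90}. What you give is the standard argument: transpose both sides to the base-change identity $f^*g_*\omega = h_*h'^*\omega$ for fiber integration, then verify that identity in a local trivialization, using the paper's convention that the pullback bundle $W\to X$ carries the local product orientation.

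There are two small points to tidy up.

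\textbf{Properness of $h'$.} Drop the clause ``because $W\cong X\times_Z Y$ and $g$ has compact fibers locally over compacta only after restricting''. It is not meaningful, since $g$ need not have compact fibers. What you actually need is the compactness of $h^{-1}(\supp T)\cap h'^{-1}(\supp\omega)$, which you then prove correctly. You also need the properness of $h$ on $h'^{-1}(\supp\omega)$. This follows from the same closed-subset-of-a-compact-product argument, and it is what makes $h_*h'^*\omega$ a well-defined smooth form even though $h'^*\omega$ is not compactly supported.

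\textbf{Reduction to decomposable forms.} A partition of unity does not reduce a compactly supported form on $U\times F$ to decomposable forms $a\wedge b$: in product coordinates $(u,t)$ the coefficients $c_{I,J}(u,t)$ depend on both variables. The fix is immediate. Run your computation with $\omega=\sum_{I,J} c_{I,J}(u,t)\,du_I\wedge dt_J$. Then both $f^*g_*\omega$ and $h_*h'^*\omega$ equal $\sum_I\bigl(\int_F c_{I,\mathrm{top}}(f(x),t)\,dt\bigr)\,f^*du_I$, with the same sign on both sides for the reason you give. Alternatively, argue by density of finite sums of product forms together with continuity of both sides in $\omega$.
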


\subsection{Borel-Moore homology}\label{sec:BM}
If~$X^n$ is a smooth manifold, we denote by~$\BM[X][k]$ the~$k$-th Borel-Moore homology $\C$-vector space, which can be defined using the dualizing complex of the manifold~\cite[Def.~3.1.16]{KS90} or using the chain complex of locally finite singular chains with complex coefficients \cite[Cor.~V.12.21]{Br97}. We denote by~$\lfc[X][k]$ the vector space of locally finite singular~$k$-chains with complex coefficients and by~$\del_k:\lfc[X][k]\to\lfc[X][k-1]$ the corresponding boundary operator. We thus have $\BM[X][k]:=\mathrm{ker} \del_k/\mathrm{im} \del_{k+1}$.

If~$f:X \to Y$ is a smooth map which is proper on the support of~$C\in\BM[X][k]$, then the \emph{pushforward}~$f_*C\in\BM[Y][k]$ is defined in the obvious way as the image by~$f$ of the singular chains defining~$C$.

The pullback of Borel-Moore homology classes is defined via Poincaré duality; see \cite[Sec.~VI.11]{Bre93}, \cite{SV96}. Suppose~$X$ oriented and let~$Z\subset X$ be a closed subset. We denote by~$H^k_Z(X) = H^k(X,X\setminus Z)$ the group of degree-$k$ cohomology classes of~$X$ with support in~$Z$; see for instance \cite[Sec.~B.2]{Ful97}. Then, there is a Poincaré duality isomorphism~\cite[Prop.~3.3.6]{KS90}:
\begin{equation*}
    \PD:H^k_Z(X) \to \BM[Z][n-k].
\end{equation*}
If~$Z$ is only assumed locally closed, then~$H^k_Z(X)$ is defined as~$H^k_Z(X) = H^k(U,U\setminus Z)$ where~$U$ is an open subset of~$X$ containing~$Z$ as a closed subset; see \cite[Def.~2.3.8]{KS90}.

Let~$Y^m$ be an oriented smooth manifold and~$Z'\subset Y$ be a closed subset. If~$f:X\to Y$ is a smooth map, we denote by~$f^*\colon H^k_{Z'}(Y) \to H^k_{f^{-1}(Z')}(X)$ the classical pullback of cohomology classes. Then, the \emph{pullback} of Borel-Moore homology classes on~$Z'$ is defined as the composition:
\begin{equation*}
    \begin{tikzcd}
        {f^*: \BM[Z'][k]} \arrow[r, "\PD^{-1}"] & H^{m-k}_{Z'}(Y) \arrow[r,"f^*"] & H^{m-k}_{f^{-1}(Z')}(X) \arrow[r, "\PD"] & {\BM[f^{-1}(Z')][k+n-m]}.
    \end{tikzcd}
\end{equation*}

The following lemma follows from the base change formula for sheaves~\cite[Prop.~3.1.9]{KS90}.
\begin{lem}
    If the following diagram of oriented smooth manifolds:
    \begin{equation*}
       \begin{tikzcd}
           W \arrow[d, "h"'] \arrow[r, "h'"] & Y \arrow[d, "g"] \\
           X \arrow[r, "f"]                            & Z               
       \end{tikzcd},
    \end{equation*}
    is a Cartesian square and if~$f$ is proper on the support of~$C\in\BM[X][k]$, then~$g^*f_*C= h'_* h^* C$.
\end{lem}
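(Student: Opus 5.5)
The plan is to translate the statement into sheaf theory and read it off from base change for sheaves, as the paper indicates. All manifolds are oriented, so Borel--Moore homology of a closed subset $S\subset X$ can be written $\BM[S][k]\cong H^{-k}(S;\omega_X|_S)$ up to a shift. Here $\omega_X\cong \C_X[n]$ is the dualizing complex, and $\PD$ is the isomorphism $H^{n-k}_S(X)=H^{n-k}(X;R\Gamma_S\C_X)\cong H^{-k}(X;R\Gamma_S\,\omega_X[-n])\cong \BM[S][k]$, with $R\Gamma_S\omega_X \cong i_*i^!\omega_X=i_*\omega_S$. Under these identifications the pushforward $f_*$ on Borel--Moore homology for $f$ proper on the support $S$ of $C$ is the adjunction morphism $Rf_!\,\omega_X\to\omega_Z$, restricted to the support. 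The pullback $g^*$ is the map $H^{m-k}_{S'}(Z)\to H^{m-k}_{g^{-1}S'}(Y)$ induced by $g^{-1}R\Gamma_{S'}\C_Z\to R\Gamma_{g^{-1}S'}\C_Y$, composed with $\PD$ on both sides. Here $m=\dim Z$ and $\dim Y=m+r$.

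\textbf{Step 1.} Reduce to local statements. Since $f$ is proper on $S=\mathrm{supp}\,C$, we may replace $X$ by a neighbourhood of $S$ and regard $C$ as a class in $H^{-k}_c$-type cohomology relative to $f|_S$, which is proper. The element $f_*C$ is supported on $f(S)$, and $g^*f_*C$ is supported on $g^{-1}f(S)=h'(h^{-1}S)$. The map $h'$ is proper on $h^{-1}(S)$, because $h^{-1}(S)\to g^{-1}f(S)$ is a base change of the proper map $S\to f(S)$. Hence both sides live in $\BM[g^{-1}f(S)][k+r]$.

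\textbf{Step 2.} Write both sides as morphisms of sheaves and apply base change. The class $C$ corresponds to a morphism $\C_S\to \omega_X|_S[-k]$, or equivalently a morphism $\C_X \to R\Gamma_S\omega_X[-k]$. Its pushforward is obtained by applying $Rf_*=Rf_!$ (valid on $S$) and composing with the trace $Rf_!\omega_X\to\omega_Z$. For the pullback side, $g^*$ is implemented by $g^{-1}$ together with the identification $g^!\simeq g^{-1}\otimes or_{Y/Z}[r]$. This identification is valid because the diagram is Cartesian with orientations matching, so that $h$ inherits $h^!\simeq h^{-1}[r]$ compatibly. The proper base change isomorphism $g^{-1}Rf_!\simeq Rh'_!h^{-1}$ (\cite[Prop.~3.1.9]{KS90}) then transforms $g^{-1}(\mathrm{tr}_f)$ into $\mathrm{tr}_{h'}\circ Rh'_!(h^{-1}\text{-data})$. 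This yields $g^*f_*C=h'_*h^*C$, provided we check that base change intertwines the trace maps $\mathrm{tr}_f$ and $\mathrm{tr}_{h'}$ with the isomorphisms $g^!\simeq g^{-1}[r]$ and $h^!\simeq h^{-1}[r]$. This compatibility is the standard statement that $g^!$ commutes with $Rf_!$ via base change, namely $g^!Rf_!\simeq Rh'_!h^!$, combined with $g^!\omega_Z=\omega_Y$ and $h^!\omega_X=\omega_W$.

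\textbf{Step 3: orientations (the main obstacle).} We must show that the isomorphisms $g^!\C_Z\simeq or_{Y/Z}[r]$ and $h^!\C_X\simeq or_{W/X}[r]$ correspond under $h'$ precisely when $W$ carries the fiber product orientation fixed in the conventions, i.e.\ $\mathrm{or}(Z)\wedge\mathrm{or}(W)=\mathrm{or}(X)\wedge\mathrm{or}(Y)$. I would check this locally where $g$ is the projection $Z\times F\to Z$ with product orientation, so that $W=X\times F$ is also oriented as a product, as remarked after the fiber product conventions. In that chart everything reduces to the Künneth formula: $g^*$ is the cross product with the fundamental class $[F]$, and the identity becomes $(f\times\mathrm{id}_F)_*(C\times[F])=f_*C\times[F]$, which is immediate on chains. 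Since both sides are local on $Z$, by the sheaf property of Borel--Moore homology (Mayer--Vietoris), this local verification suffices. Sign bookkeeping is the only delicate point, and the local product chart settles it.
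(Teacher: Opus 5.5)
Your overall strategy, proper base change $g^{-1}Rf_!\simeq Rh'_!h^{-1}$ followed by an orientation check, is what the paper's one-line citation of \cite[Prop.~3.1.9]{KS90} amounts to. However, your execution has a genuine gap: it silently assumes that $g$ is a submersion. Two steps depend on this.
\begin{enumerate}
\item The functor isomorphism $g^!\simeq g^{-1}\otimes\mathrm{or}_{Y/Z}[r]$ in Step~2 holds for (topological) submersions. It has nothing to do with the square being Cartesian.
\item The chart $g\colon Z\times F\to Z$ in Step~3 exists only for submersions.
\end{enumerate}
The hypothesis ``Cartesian square'' only says that $f$ and $g$ are transverse, since otherwise $X\times_Z Y$ is not defined. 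The case you omit is exactly the one the paper needs. In the proof of part (ii) of \Cref{thm:pushforward-CFgval}, the lemma is applied to the right-hand square of \eqref{eq:cosphere-bundles}. There $g=\iota_Y\colon\P_Y\hookrightarrow T^*Y$ is a closed embedding of codimension one, so $r=-1$. For such $g$ there is no product chart. Moreover, $g^!$ and $g^{-1}\otimes\mathrm{or}_{Y/Z}[r]$ agree on locally constant sheaves but not on general objects such as $Rf_!(\cdot)$. So the base change $g^!Rf_*\simeq Rh'_*h^!$ cannot be converted into a statement about the Borel--Moore pullback, which is defined through $g^{-1}$ on cohomology with supports.

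\textbf{Repair.} Drop the functor identification. Use instead the base-change morphism $h^{-1}f^!\C_Z\to h'^!\C_Y$, adjoint to $Rh'_!h^{-1}f^!\C_Z\simeq g^{-1}Rf_!f^!\C_Z\to\C_Y$.
\begin{itemize}
\item With this morphism, $g^{-1}\circ f_*$ and $h'_*\circ h^{-1}$ agree formally on cohomology with supports.
\item This requires the morphism to be an isomorphism of shifted orientation sheaves, matching the trivializations given by the orientations of $X,Y,Z$ and the fiber-product orientation of $W$.
\item This is where transversality enters, and it is a statement local on $W$ about rank-one local systems.
\item To check it, factor $g$ as the graph embedding $Y\hookrightarrow Y\times Z$ followed by the projection $Y\times Z\to Z$. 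This splits the square into two Cartesian squares, and the orientation convention is associative. One square lies over a submersion, where your product chart works. The other lies over a closed embedding transverse to $\mathrm{id}_Y\times f$, where you need a transverse-slice chart.
\end{itemize}

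\textbf{Localisation.} Separately, your final localisation step is not valid as written. Borel--Moore classes are not determined by their restrictions to an open cover. For example, the class of a point in $H^{\mathrm{BM}}_0(S^1)$ restricts to zero on each of two arcs covering $S^1$, because $H^{\mathrm{BM}}_0(\R)\cong H^1(\R)=0$. So ``Mayer--Vietoris'' does not let you glue the chart identity $(f\times\mathrm{id}_F)_*(C\times[F])=f_*C\times[F]$ into the global one. The reduction to charts is legitimate only for the comparison of the orientation isomorphisms above, which are morphisms of sheaves in a single degree and therefore do glue.
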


\subsection{Subanalytic currents and Borel-Moore homology.}
In the context of real analytic manifolds, we will consider subanalytic currents. We refer to \cite{hardt75} for more information on this topic and to~\cite[Chap.~8]{KS90} for more details on subanalytic sets.
 
Let~$X$ be a real analytic manifold (possibly with boundary). A~$k$-current~$T$ on~$X$ is called \emph{subanalytic} if there exists a locally finite stratification~$\{S_\alpha\}_{\alpha\in A}$ of~$X$ together with a complex multiplicity~$m_\alpha$ and a choice of orientation for each~$k$-dimensional stratum~$S_\alpha$ such that~$T = \sum_{\alpha\in A_k} m_\alpha \intcur{S_\alpha}$, where~$A_k$ indexes the strata of dimension $k$. In such a situation, the stratification~$\{S_\alpha\}_{\alpha\in A}$ is said to \emph{define}~$T$. We denote by~$\scur{k}[X]$ the space of subanalytic~$k$-currents on~$X$ and if~$S\subset X$ is a closed subanalytic subset, we denote by~$\scur{k}[S]$ the subspace of currents supported on~$S$.

The triangulation theorem of Hardt~(\cite{hardt77}, \cite[Prop.~8.2.5]{KS90}) provides an isomorphism between the chain complex of subanalytic currents and the chain complex of locally finite singular chains with complex coefficients; see~\cite{hardt75} or~\cite[Thm.~9.2.10]{KS90}.
In addition, this isomorphism is compatible with pushforwards and pullbacks under mild assumptions, as shown by the following two lemmas due to Hardt~\cite{hardt75}; see also~\cite[App.~B]{nicolaescu10}. In what follows, we will denote by~$\bar{T}$ the Borel-Moore homology class associated to a subanalytic cycle~$T$.
\begin{lem}\label{lem:cur-BM-pushforward}
    Let~$T\in\scur{k}[X]$ be a cycle and let~$f\colon X \to Y$ be a real analytic map between oriented real analytic manifolds which is proper on~$S = \supp(T)$. Then, the pushforward~$f_*T$ is a subanalytic~$k$-current supported on~$f(S)$ and~$f_*\bar{T} = \bar{f_*T}$ in~$\BM[f(S)][k]$.
\end{lem}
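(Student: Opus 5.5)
We would prove \Cref{lem:cur-BM-pushforward} by reducing it to the compatibility of Hardt's triangulation isomorphism with images of simplices under a proper real analytic map. The plan has three steps: first show $f_*T$ is a subanalytic current supported on $f(S)$; then choose triangulations of $S$ and $f(S)$ compatible with $f$; finally compare the two pushforwards simplex by simplex.

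\emph{Step 1: $f_*T$ is subanalytic.} Write $T=\sum_{\alpha\in A_k} m_\alpha \intcur{S_\alpha}$ for a stratification defining $T$. Since $f$ is proper on $S$, the image $f(S)$ is a closed subanalytic subset of $Y$. I would then refine the stratification of $S$ together with a stratification of $f(S)$ so that $f$ becomes a stratified map, in the sense that $f$ restricted to each stratum is a submersion onto a stratum; this is the subanalytic version of the Thom--Whitney stratification of proper maps.

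On a $k$-dimensional stratum $S_\alpha$ there are two cases. If $f(S_\alpha)$ has dimension less than $k$, then $f_*\intcur{S_\alpha}$ vanishes as a $k$-current, because the pullback of any $k$-form to $S_\alpha$ is zero. Otherwise $f|_{S_\alpha}$ is a local diffeomorphism onto a $k$-dimensional stratum $R_\beta$ with finite fibers; finiteness follows from properness and subanalyticity. Then $f_*\intcur{S_\alpha}$ equals $\intcur{R_\beta}$ weighted by the locally constant signed count of preimages, and this count is constant on $R_\beta$ after a further refinement. Summing over $\alpha$ expresses $f_*T$ as a subanalytic current, and its support lies in $f(S)$.

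\emph{Step 2: compatible triangulations.} Choose subanalytic triangulations of $S$ and of $f(S)$ that make $f$ simplicial, possibly after subdividing. Here I would invoke the existence of such triangulations for proper subanalytic maps, due to Hardt and also Shiota; this is the key input. Under Hardt's isomorphism, the current $T$ corresponds to the locally finite chain $\sum_\sigma m_\sigma \sigma$ over the $k$-simplices $\sigma$, and $\bar T$ is its class.

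\emph{Step 3: comparison.} The singular chain $f_*(\sum m_\sigma\sigma)$ is $\sum m_\sigma (f\circ\sigma)$. Each $f\circ \sigma$ is either degenerate, hence zero in degree $k$ after normalization, or a homeomorphism onto a $k$-simplex $\tau$ of the target with sign $\pm1$. This matches the current computation $f_*\intcur{\sigma}=\pm\intcur{\tau}$ or $0$ from Step 1. Consequently the isomorphisms commute with pushforward at the chain level, and $f_*\bar T=\overline{f_*T}$ in $\BM[f(S)][k]$. Local finiteness of the image chain follows from properness of $f$ on $S$.

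The main obstacle is Step 2, namely the existence of triangulations making the proper subanalytic map $f|_S$ simplicial. A full subanalytic map is not always triangulable in this sense, so I would fall back on a weaker route if needed. The first choice would be to triangulate $f(S)$ compatibly with the images of strata, and then triangulate $S$ so that each simplex maps homeomorphically, or with dimension drop, into a simplex of $f(S)$. This weaker compatibility is all Step 3 uses, and it is provided by Hardt's theorem~\cite{hardt75} on the subanalytic triangulation of proper maps restricted to closed sets.
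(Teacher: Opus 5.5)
Your Step~1 is fine, but Steps~2--3 do not go through. The input you invoke in Step~2 is false in general: a proper subanalytic (even polynomial) map need not admit triangulations making it simplicial. For $(x,y)\mapsto(x,xy)$ on $[-1,1]^2$, the one-dimensional fiber $\{0\}\times[-1,1]$ would force some $2$-simplex to be collapsed onto a lower-dimensional simplex, yet the map is a local diffeomorphism off that segment. You concede this, but your fallback does not give what Step~3 uses. If a $k$-simplex $\sigma$ only maps homeomorphically \emph{into} a target simplex $\tau$, then $f\circ\sigma\neq\pm\tau$, so $f_\#(\sum_\sigma m_\sigma\sigma)$ is not the simplicial chain of $f_*T$ in the target triangulation. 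If $f|_\sigma$ merely drops dimension, then $f\circ\sigma$ is not a degenerate singular simplex, since normalization only kills simplices factoring through a codegeneracy $\Delta^k\to\Delta^{k-1}$. Such terms therefore do not vanish at chain level, and they need not form a cycle on their own. So the chain-level identity claimed in Step~3 is not available, and showing that the two resulting cycles are homologous is exactly the content of the lemma.

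What is missing is a homological argument in top degree that avoids triangulating $f$ altogether. Since $H^{\mathrm{BM}}_j(Z')=0$ for $j>\dim Z'$, the exact sequence $H^{\mathrm{BM}}_k(Z')\to H^{\mathrm{BM}}_k(f(S))\to H^{\mathrm{BM}}_k(f(S)\setminus Z')$ shows that restriction to $U=f(S)\setminus Z'$ is injective whenever $Z'\subset f(S)$ is closed subanalytic with $\dim Z'<k$. Let $B\subset S$ be the closed union of the strata of dimension $<k$ and of the $k$-dimensional strata on which $f$ has rank $<k$. Take $Z'$ to be the union of the strata of $f(S)$ of dimension $<k$ together with $f(B)$; it is closed by properness and has dimension $<k$. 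Then $U$ is a $k$-manifold, and $S\cap f^{-1}(U)\to U$ is a proper local diffeomorphism, hence a finite covering over each component. The restrictions of $f_*\bar T$ and $\overline{f_*T}$ to $U$ are therefore both given by local multiplicities, namely the weighted signed preimage counts from your Step~1. Combined with the compatibility of proper pushforward and of Hardt's isomorphism with restriction to open subsets, this proves the lemma. For comparison, the paper gives no argument of its own here; it quotes the lemma from Hardt~\cite{hardt75} (see also \cite[App.~B]{nicolaescu10}).
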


For the pullback, it is proven in~\cite{hardt75} that the intersection of the subanalytic current with the integration current on the embedded submanifold is well-defined under mild transversality assumptions and corresponds to the classical intersection theory on Borel-Morel homology classes. This intersection theory of subanalytic currents is defined using the theory of slicing, but it is easy to verify that the slicing is the same as the pullback by the embedding of the fiber in the sense of~\Cref{prop:pullback-cur}. Thus we have:

\begin{lem}\label{lem:cur-BM-pullback}
    Let~$f\colon X^n \hookrightarrow Y^m$ be a closed embedding of oriented real analytic manifolds and let~$T\in\scur{k}[Y]$ be a cycle with support~$S = \supp(T)$. Assume that the map~$f$ is transverse to the strata of a Whitney stratification of~$Y$ defining~$T$. Then, the pullback~$f^*T$ of the current~$T$ is well-defined and one has~$f^*\bar{T} = \bar{f^*T}$ in~$\BM[f^{-1}(S)][k+n-m]$.
\end{lem}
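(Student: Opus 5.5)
We prove \Cref{lem:cur-BM-pullback} by reducing both operations to a local computation near a Whitney stratification transverse to~$f$. Fix a Whitney stratification~$\{S_\alpha\}$ of~$Y$ defining~$T$ and transverse to~$f$. We argue in three steps.

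\textbf{Step 1: the pullback current is well-defined.} Whitney's condition~(a) implies that~$\WF(T)$ is contained in~$\bigcup_\alpha T^*_{S_\alpha}Y$. This is the standard estimate for integration currents over Whitney stratified sets, obtained by checking each top stratum together with its closure; if needed one may refine so that the stratification is angular, as in \Cref{sec:WF-subanalytic-currents}. Transversality of~$f$ to every~$S_\alpha$ says exactly that~$df^*\eta\neq0$ for every nonzero~$\eta\in T^*_{S_\alpha}Y$ over~$f(X)$. Hence the hypotheses of \Cref{prop:pullback-cur} hold with~$\Gamma=\bigcup_\alpha T^*_{S_\alpha}Y\setminus\zero$, and~$f^*T$ is well-defined.

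\textbf{Step 2: identify~$f^*T$ as a subanalytic current.} Since~$f$ is transverse to the strata, the sets~$f^{-1}(S_\alpha)$ form a Whitney stratification of~$X$. Each~$k$-dimensional stratum~$S_\alpha$ pulls back to a~$(k+n-m)$-dimensional stratum, oriented by the intersection convention. We claim that
\[
f^*T=\sum_{\alpha\in A_k} m_\alpha\,\intcur{f^{-1}(S_\alpha)}.
\]
To prove this, take smooth approximations~$T_\varepsilon\to T$ with wave front sets converging into~$\Gamma$, for instance by convolving in tubular coordinates or by a homotopy along a transverse family of maps. By the sequential continuity in \Cref{prop:pullback-cur}, it suffices to compute the limit locally near a point of a top stratum. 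There the computation is the classical one for transverse intersections of submanifolds. The lower-dimensional strata carry no mass, so the formula follows. Equivalently, one may use Federer slicing: locally~$f(X)$ is a fiber of a submersion~$Y\to\R^{m-n}$, and the slice agrees with the pullback. The paper already notes this agreement.

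\textbf{Step 3: compare with Borel-Moore homology.} We work locally and use the Poincaré-duality definition of~$f^*$. Choose a tubular neighbourhood~$U$ of~$f(X)$ with projection~$p:U\to f(X)$. By transversality and Thom's first isotopy lemma, after shrinking~$U$ the pair~$(U,S\cap U)$ is homeomorphic to~$(X\times\R^{m-n},\,f^{-1}(S)\times\R^{m-n})$, compatibly with the strata. Under this trivialization, the class~$\bar T|_U$ is the cross product of~$\bar{f^*T}$ with the fundamental class of~$\R^{m-n}$. The map~$f^*$ on Borel-Moore homology is~$\PD\circ f^*\circ\PD^{-1}$, which is restriction to the zero section. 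It sends such a cross product back to the first factor, so~$f^*\bar T=\bar{f^*T}$. The signs must match the intersection orientation convention of \cite[Ex.~11.12]{Bre93} fixed earlier.

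\textbf{Main obstacle.} The hardest part is Step~2 together with the sign bookkeeping in Step~3. We must justify that the distributional pullback puts no extra mass on the singular strata. The plan is to deduce this from uniqueness in \Cref{prop:pullback-cur} applied to the two currents, both of which are limits of the same smooth approximations. Alternatively, one can invoke Hardt's slicing theory \cite{hardt75} directly.
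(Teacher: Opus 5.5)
\textbf{There is a gap in Step 1.} It relies on the assertion that a Whitney stratification defining $T$ automatically satisfies $\WF(T)\subset\bigcup_\alpha T^*_{S_\alpha}Y$. This is not a standard fact, and the paper does not have it. The paper treats transversality to a Whitney stratification and angular transversality as genuinely different hypotheses: see the remark after \Cref{prop:WF-subanalytic}, and the extra angularity assumption in \Cref{thm:pushforward-CFgval}. It also obtains angular stratifications only through resolution of singularities in \Cref{sec:WF-subanalytic-currents}. If your claim held, all of this would be redundant.

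Your fallback does not help either. Refining to an angular stratification can destroy the transversality of $f$ to the strata, so under the hypotheses of the lemma you cannot check the wave-front condition of \Cref{prop:pullback-cur}. In the paper, ``well-defined'' is meant in the sense of Hardt's intersection (slicing) theory of subanalytic currents \cite{hardt75}, which needs only transversality to strata. The identification with the pullback of \Cref{prop:pullback-cur} is used only where the latter exists, e.g.\ under angular transversality in \Cref{thm:pushforward-CFgval}.

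\textbf{Step 2 has a second gap, which you flag but do not close.} Continuity and uniqueness of the extension in \Cref{prop:pullback-cur} identify $f^*T$ as $\lim f^*T_\varepsilon$, not as $\sum_\alpha m_\alpha\intcur{f^{-1}(S_\alpha)}$. A computation on top strata only shows that the difference $D$ is supported on the preimage of the lower-dimensional strata. Concluding $D=0$ needs a support theorem, hence a priori regularity of $D$ (flatness, or finite mass). Arbitrary currents can be nonzero cycles on sets of too small dimension: in $\R^2$, the boundary of the $2$-current $E(\eta_{12}\,dx\wedge dy)=\eta_{12}(0)$ is a nonzero $1$-cycle supported at the origin. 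Hardt's slicing provides exactly the missing regularity. The slice is a subanalytic (hence flat) current equal to the transverse intersection, and it is compatible with the Borel--Moore intersection product.

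Your Step 3 (Thom's first isotopy lemma and the Gysin map of a trivial bundle) is a reasonable sketch of that compatibility. You would need to arrange properness in the isotopy lemma, and to use that classes in the top degree $k+n-m$ on $f^{-1}(S)$ are determined locally. But it presupposes Step 2.

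So the argument that works is the one you mention only in passing: define the pullback by Hardt's slicing. The lemma is then Hardt's theorem, which is what the paper cites, together with the observation that slicing agrees with the pullback of \Cref{prop:pullback-cur} when both are defined.
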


Note that the assumption of this last lemma is slightly stronger than what is stated in~\cite{hardt75}, but it is sufficient for our purpose.

\subsection{Constructible functions}
As we exposed in the introduction, constructible functions are defined as integer-valued functions on a real analytic manifold~$X$ which are piecewise constant on a locally finite stratification of~$X$. If~$\phi:X\to \Z$ is constructible, the theory of stratification of subanalytic sets (\cite[Thm.~8.3.20, Exo.~VIII.12]{KS90}) ensures that there exists a Whitney stratification of~$X$ by locally closed relatively compact subanalytic submanifolds~$S_i\subset X$, $i\in I$ such that~$\phi$ is constant on each~$S_i$, i.e., there are~$m_i\in\Z$ such that:
\begin{equation}\label{eq:exp-CF}
    \phi = \sum_{i\in I} m_i\1_{S_i}.
\end{equation}
It follows from basic properties of subanalytic sets that constructible functions form a commutative ring~$\CF$ for pointwise addition and multiplication (see~\cite[Sec.~8.2]{KS90}). Let us also denote by~$\CF[\cpt]$ the subring of compactly supported constructible functions, for which the sum~\eqref{eq:exp-CF} can be taken finite and involving only compact subanalytic subsets. 

In this article, we focus on operations on constructible functions, as developped in~\cite{Viro1988} in the complex setting and by~\cite{Schapira1991} in the real analytic setting. First, consider another real analytic manifold~$Y$ and let~$\phi\in\CF$ and~$\psi\in\CF[][Y]$. The \emph{exterior product}~$\phi\boxtimes\psi\in\CF[][X\times Y]$ is the constructible function defined by~$(\phi\boxtimes\psi)(x,y) = \phi(x)\psi(y)$ for any~$(x,y)\in X\times Y$. Moreover, if~$f:X\to Y$ is a real analytic map, the \emph{pullback}~$f^*\psi\in\CF$ is defined simply by~$f^*\psi=\psi\circ f$. That these two operations send constructible functions to constructible functions follows again from well-known properties of subanalytic sets. Moreover, the pointwise product of two constructible functions on~$X$ clearly coincides with the pullback of their exterior product by the diagonal embedding~$\delta:X\hookrightarrow X\times X$.

A more involved operation is the pushforward of a constructible function by a real analytic map. First, if~$S\subset X$ is a locally closed relatively compact subanalytic subset of~$X$, then we denote by~$\chi(S)$ its Euler characteristic with compact support; see~\cite[Chap.~9]{KS90}. In particular, when~$S$ is compact, then~$\chi(S)$ is the usual Euler characteristic. Then, the \emph{integral} of a compactly supported function~$\phi\in\CF[\cpt]$ which can be written as~\eqref{eq:exp-CF} is the integer:
\begin{equation*}
    \int_X\phi\dEuler = \sum_{i\in I}m_i\cdot\chi(S_i).
\end{equation*} 
Finally, if~$f:X\to Y$ is a real analytic map which is proper on the support of~$\phi\in\CF$, the \emph{pushforward}~$f_*\phi\in\CF[][Y]$ is defined for any~$y\in Y$ by:
\begin{equation*}
    f_*\phi(y) = \int_X \1_{f^{-1}(y)} \phi \dEuler.
\end{equation*}
That the function~$f_*\phi$ is constructible is proven in~\cite[Sec.~9.7]{KS90} using sheaf theory, and can also be seen as a consequence of the cell decomposition theorem in the o-minimal structure of globally subanalytic sets~\cite[Chap.~4, (2.10)]{vdD98}. Using this operation, one can naturally define the convolution of constructible functions. For that, suppose that~$G$ is a Lie group acting on~$X$ so that the action~$a:G\times X\to X$ is real analytic. Then, the \emph{convolution} of~$\phi\in\CF[][G]$ and~$\psi\in\CF[][X]$ is defined by~$\phi*\psi = a_*(\phi\boxtimes\psi)$ whenever the map~$a$ is proper on the support of~$\phi\boxtimes\psi$.

\subsection{Characteristic cycles}\label{sec:characteristic-cycles}
Let~$X^n$ be an oriented real analytic manifold. Denote by~$\CLag{X}$ the group of \emph{Lagrangian chains} of~$X$, that is, the subgroup of~$\lfc$ generated by conical locally closed Lagrangian subanalytic submanifolds. The group of \emph{Lagrangian cycles} is the subgroup~$\Lag{X}$ of~$\BM$ generated by Lagrangian chains~$C\in\CLag{X}$ with~$\del_n C=0$.

\subsubsection*{Construction.}
To any constructible function~$\phi\in\CF$, one can associate a Lagrangian cycle~$\CC(\phi)\in\Lag{X}$ called the \emph{characteristic cycle} of~$\phi$; see~\cite[Chap.~9]{KS90} and \cite{Fu94}. The morphism~$\CC:\CF \to \Lag{X}$ induces an isomorphism between constructible functions and Lagrangian cycles, with an explicit inverse; see \cite[Thm.~8.3]{Ka85}, \cite[Thm.~9.7.11]{KS90}. We briefly recall its construction here. Denote by~$\mathcal{S}$ a Whitney stratification of~$X$ such that~$\phi$ is constant on each stratum. We denote:
\begin{equation*}
    \Lambda = \bigcup_{S\in\mathcal{S}} T^*_{S}X.
\end{equation*}
Then, the subset 
\begin{equation*}
    \Lambda^\circ = \bigcup_{S\in\mathcal{S}} \left(T^*_{S}X \setminus \bigcup_{S' \supsetneq S} \closure{T^*_{S'}X}\right)
\end{equation*}
is a smooth subanalytic subset of~$T^*X$ which is open and dense in~$\Lambda$. We denote by~$\{\Lambda_\alpha\}_{\alpha\in A}$ the collection of connected components of~$\Lambda^\circ$. Note that for each connected component~$\Lambda_\alpha$, there exists~$S_\alpha\in\mathcal{S}$ such that~$\Lambda_\alpha\subset T^*_{S_\alpha}X$. The characteristic cycle of~$\phi$ is defined as:
\begin{equation}\label{eq:def-CC}
    \CC(\phi) = \sum_{\alpha \in A} m_\alpha [\Lambda_\alpha],
\end{equation}
where~$[\Lambda_\alpha]\in\lfc[\Lambda][n]$ and~$m_\alpha\in\Z$ are certain integral multiplicities that can be defined locally using stratified Morse theory~\cite{GM88}; see~\cite{SV96}.

\subsubsection*{Operations.}
The pushforward and pullback operations on characteristic cycles have been defined in~\cite{KS90}. We recall their description as exposed in~\cite{SV96}. 
The maps from the classical diagram~\eqref{eq:df-diag} allow one to describe the effect of the pushforward of constructible functions on their characteristic cycles.
\begin{lem}[\textnormal{\cite[Prop.~9.4.2]{KS90}, \cite[(2.17)]{SV96}}]
    \label{lem:pushforward-CC}
    Let~$f\colon X^n \to Y^m$ be a real analytic map between oriented real analytic manifolds and~$\phi\in\CF$ such that~$f$ is proper on~$\supp(\phi)$. Then~$\tau_*df^*\CC(\phi)\in\BM[T^*Y][m]$ is well-defined and:
    \begin{equation*}
        \CC(f_*\phi) = \tau_*df^*\CC(\phi).
    \end{equation*}
\end{lem}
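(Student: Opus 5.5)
The plan is to treat \Cref{lem:pushforward-CC} as the pushforward formula for characteristic cycles in the framework of Kashiwara--Schapira, transported to the language of Borel--Moore homology by Schmid and Vilonen. The argument has three parts: make sense of the right-hand side, compare it with the sheaf-theoretic formula, and check the orientation conventions.

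First, the right-hand side must be well-defined. Choose a Whitney stratification $\mathcal S$ adapted to $\phi$, and let $\Lambda$ be the union of the conormal bundles, so that $\CC(\phi)$ is a Borel--Moore class supported on the closed conic subanalytic set $\Lambda$. The map $df\colon X\times_Y T^*Y\to T^*X$ is a morphism of manifolds of dimensions $n+m$ and $2n$. Hence the pullback $df^*$ of \Cref{sec:BM}, defined through Poincar\'e duality, sends $\BM[\Lambda][n]$ to $\BM[df^{-1}(\Lambda)][m]$. No transversality is needed at this stage, because the Borel--Moore pullback is always defined. Next we check that $\tau$ is proper on $df^{-1}(\Lambda)$. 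Over a compact set $K\subset Y$, a point $(x,\eta)\in df^{-1}(\Lambda)$ has $x\in f^{-1}(K)\cap\supp\phi$, which is compact by properness; the covector $\eta$ stays in a compact fiber set since $\tau$ is the identity on it. This gives properness, so $\tau_*$ is defined and $\tau_*df^*\CC(\phi)\in\BM[T^*Y][m]$. We also observe that the support lies in $\tau(df^{-1}(\Lambda))$, which is a conic subanalytic isotropic set by the standard estimate \cite[Prop.~8.3.11]{KS90}. The image class is therefore a Lagrangian cycle.

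Second, we identify this class with $\CC(f_*\phi)$. We lift $\phi$ to a constructible sheaf $F$ with $\chi(F)=\phi$. Since $f$ is proper on the support, $f_*\phi$ corresponds to $Rf_!F=Rf_*F$. The key input is Kashiwara--Schapira's theorem \cite[Prop.~9.4.2]{KS90}: under properness, $CC(Rf_*F)=f_*CC(F)$. Here $f_*$ on Lagrangian cycles is defined microlocally as the composition of the two natural functors on $H^0_{\Lambda}(T^*X;\pi^{-1}\omega_X)$: inverse image along $df$ (denoted ${}^tf'$ in loc.\ cit.), followed by proper direct image along $\tau$ (denoted $f_\pi$). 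We then invoke Schmid--Vilonen \cite[(2.17)]{SV96}. They show that, after identifying $H^0_\Lambda(T^*X;\pi^{-1}\omega_X)\cong\BM[\Lambda][n]$ (using the orientation of $X$ to trivialize $\omega_X$), these microlocal operations become the Poincar\'e-dual pullback and the ordinary pushforward of Borel--Moore classes. This identification holds because both are induced by the same morphisms of dualizing complexes $df^{-1}\omega\to\omega\otimes\mathrm{or}[\cdot]$ and $R\tau_!\omega\to\omega$.

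Third, we check signs. Schmid--Vilonen and Kashiwara--Schapira orient $X\times_Y T^*Y$ and the cotangent bundles with their own conventions. We must confirm that these agree with the fiber-product and conormal conventions fixed above, or else absorb a global sign depending only on $n,m$. This can be tested on $f$ a projection $\R^n\to\R^m$ and $\phi$ the indicator of a point or of a ball, where both sides can be computed by hand.

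The main obstacle is this last compatibility. The Poincar\'e duality isomorphism and the orientations used to identify $\omega_{T^*X}$ with a constant sheaf must match the conventions of \Cref{sec:BM} and the orientation subsection, since the formula only holds with consistent sign choices. I would settle it with the explicit test cases, relying on functoriality and locality to propagate the sign to the general case.
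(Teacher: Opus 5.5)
Your proposal matches the paper, which gives no independent proof and imports the formula directly from \cite[Prop.~9.4.2]{KS90} in the Borel--Moore formulation of \cite[(2.17)]{SV96}; your well-definedness and sign-convention checks go beyond what the paper verifies. One small correction in the properness step: work with the part of $\Lambda$ lying over $\supp\phi$, which still carries $\CC(\phi)$, because the full union of conormals contains the zero section over $X\setminus\supp\phi$, where $\tau$ need not be proper; also test properness on compact subsets of $T^*Y$ rather than of $Y$.
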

We have a similar statement for the pullback operation by real analytic maps satisfying some transversality assumptions~\cite{KS90, SV96}. See also \cite[Sec.~5]{Sab85} and \cite[Thm.~3.3]{Schu17} in the complex setting. We recall the assumption of~\cite{SV96} under a simpler terminology: 
\begin{defi}\label{def:transverse-map-CF}
    A real analytic map~$f\colon X \to Y$ is called \emph{transverse} to~$\psi\in\CF[][Y]$ (called \emph{normally non-singular} in \cite{SV96}) if there exists a Whitney stratification of~$Y$ such that~$\psi$ is constant on each stratum and~$f$ is transverse to each stratum.
\end{defi}
Under such transversality conditions, Schmid and Vilonen state:
\begin{lem}[\textnormal{\cite[Prop.~9.4.3]{KS90}, \cite[(2.20)]{SV96}}]
    \label{lem:pullback-CC}
    Let~$f\colon X^n\to Y^m$ be a real analytic map between oriented real analytic manifolds and~$\psi\in\CF[][Y]$ such that~$f$ is transverse to~$\psi$. Then~$df_*\tau^*\CC(\psi)\in\BM[T^*X][n]$ is well-defined and:
    \begin{equation*}
        \CC(f^*\psi) = df_*\tau^*\CC(\psi).
    \end{equation*}
\end{lem}
As explained in~\cite{SV96}, the assumption under which \cite[Prop.~9.4.3]{KS90} is proven is more general than the transversality assumption above, but this result will be sufficient for our purpose.

\subsection{Generalized valuations}
Let $X^n$ be a smooth oriented manifold, not necessarily real analytic. The manifold $\P_X$ is a contact manifold, with the contact hyperplane at a point $(x,[\xi])$ given by~$\ker \left( d\pi_X|_{(x,[\xi])}\right)^*\!\xi$. Vectors belonging to the contact plane are called horizontal. A differential form is called vertical if it vanishes whenever we plug in only horizontal vectors. Finally a Legendrian cycle is an $(n-1)$-cycle that vanishes on vertical forms. Note that we do not assume that $T$ is rectifiable.
	
We denote by $\submfd$ the set of compact submanifolds with corners, i.e. compact subsets that are locally diffeomorphic to a quadrant in $\R^n$. To each $P \in \submfd$ we can associate its normal cycle $N(P)$, which is an integral Legendrian cycle in $\P_X$; see for instance~\cite{alesker_intgeo}.
 
A \emph{smooth valuation} on~$X$ is a map~$\mu:\submfd\to\C$ such that there exist $\oldphi\in\forms{n}$ and $\omega\in\forms[\P_X]{n-1}$ such that for all~$P\in\submfd$, we have:
\begin{equation}\label{eq:def-val}
    \mu(P) = \int_P \oldphi+\int_{N(P)} \omega .
\end{equation}
The space of smooth valuations is denoted by~$\val$. As a quotient of the space~$\forms{n} \oplus \forms[\P_X]{n-1}$, this space is naturally endowed with a Fréchet topology. The support of a smooth valuation is defined in the obvious way and the space of compactly supported valuations is denoted by~$\val[\cpt]$. It admits a locally convex topology that is finer than the induced topology, see \cite[Section 5.1]{alesker_val_man4}. Any~$\mu\in\val[\cpt]$ can be represented as in~\eqref{eq:def-val} by compactly supported forms~$(\oldphi,\omega)\in \forms[X][\cpt]{n} \oplus \forms[\P_X][\cpt]{n-1}$, see \cite[Lemma 2.3]{bernig_quat09}. Examples of smooth valuations are the Euler characteristic, the volume (if $X$ is a Riemannian manifold), or mixed volumes with smooth reference bodies with positive curvature on $\R^n$. 

An important breakthrough obtained by Alesker and Fu~\cite{alesker_val_man3}, see also~\cite{AB09}, is the introduction of a commutative and associative product structure on~$\mathcal V^\infty(X)$ such that the Euler characteristic is the unit element. Moreover, it satisfies a version of Poincar\'e duality, i.e. the map 
\begin{displaymath}
	\mathcal V^\infty(X) \times \mathcal V^\infty_\cpt(X) \to \C, (\mu_1,\mu_2) \mapsto \int_X \mu_1 \cdot \mu_2 = (\mu_1 \cdot \mu_2)(X)
\end{displaymath}
is a perfect pairing \cite[Thm.~6.1.1]{alesker_val_man4}; see also \cite{bernig_quat09} for an alternative proof. We thus get an injection 
\begin{displaymath}
	\PD: \mathcal V^\infty(X) \hookrightarrow \big(\val[\cpt]\big)^*
\end{displaymath}
with dense image. This motivates the following definition:

\begin{defi}[{\cite[Definition 7.1.1]{alesker_val_man4}}]\label{def:gval}
    A \emph{generalized valuation} is an element of the topological dual:
    \begin{equation*}
        \gval = \big(\val[\cpt]\big)^*,
    \end{equation*}
    and this space is naturally endowed with the weak topology.
\end{defi}
The \emph{support} of a generalized valuation is defined in the obvious way and the space of compactly supported generalized valuations is denoted by~$\mathcal{V}^{-\infty}_\cpt(X)$. It has a natural locally convex topology. By \cite[Prop.~7.3.10]{alesker_val_man4},  there is a vector space isomorphism~$\mathcal{V}^{-\infty}_\cpt(X) \cong \mathcal V^\infty(X)^*$.

A result of Bröcker and the first named author~\cite{BB07} implies the following description of generalized valuations. 
\begin{prop}\label{prop:gval-as-cur}
    There is a closed embedding~$\gval \hookrightarrow \cur{n} \oplus \cur{n-1}[\P_X]$ with image the space of currents~$(C,T)$ such that:
    \begin{enumerate}
        \item $T$ is a Legendrian cycle,
        \item $\pi_{X*} T = \partial C$.
    \end{enumerate}
\end{prop}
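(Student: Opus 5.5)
The plan is to realize a generalized valuation as a pair of currents by dualizing a presentation of compactly supported smooth valuations by forms. Write
\begin{displaymath}
\mathrm{Int}\colon \forms[X][\cpt]{n}\oplus\forms[\P_X][\cpt]{n-1}\to\val[\cpt]
\end{displaymath}
for the map sending $(\oldphi,\omega)$ to the valuation $P\mapsto\int_P\oldphi+\int_{N(P)}\omega$. This map is surjective (\cite[Lemma 2.3]{bernig_quat09}), continuous, and open by the definition of the topology on $\val[\cpt]$. Given $\psi\in\gval$, define $C\in\cur{n}$ and $T\in\cur{n-1}[\P_X]$ by
\begin{displaymath}
\dualdot{C,\oldphi}=\psi(\mathrm{Int}(\oldphi,0)),\qquad \dualdot{T,\omega}=\psi(\mathrm{Int}(0,\omega)).
\end{displaymath}
Injectivity of $\psi\mapsto(C,T)$ follows from surjectivity of $\mathrm{Int}$. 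Since $\mathrm{Int}$ is a topological quotient map, the dual $\gval$ is identified, with its weak topology, with the annihilator $\ker(\mathrm{Int})^\perp$ inside $\cur{n}\oplus\cur{n-1}[\P_X]$. Annihilators are weakly closed, so the embedding is closed. It remains to identify $\ker(\mathrm{Int})^\perp$ with the set of pairs satisfying (1) and (2).

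\emph{Necessity.} I would exhibit three families of elements in $\ker(\mathrm{Int})$.
\begin{itemize}
\item For $\theta\in\forms[\P_X][\cpt]{n-2}$, the pair $(0,d\theta)$ lies in the kernel, since each $N(P)$ is a cycle. Annihilating these gives $\partial T=0$.
\item For vertical compactly supported $\omega$ (i.e.\ in the ideal generated by a contact form $\alpha$ and $d\alpha$), the pair $(0,\omega)$ lies in the kernel, since each $N(P)$ is Legendrian. Annihilating these gives that $T$ is Legendrian.
\item For $\beta\in\forms[X][\cpt]{n-1}$, the pair $(d\beta,-\pi_X^*\beta)$ lies in the kernel. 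Indeed $\pi_{X*}N(P)=\partial\intcur{P}$ with the Stokes orientation, so
\begin{displaymath}
\int_P d\beta=\int_{\partial P}\beta=\int_{N(P)}\pi_X^*\beta .
\end{displaymath}
Annihilating these gives $\dualdot{C,d\beta}=\dualdot{T,\pi_X^*\beta}$, i.e.\ $\partial C=\pi_{X*}T$.
\end{itemize}

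\emph{Sufficiency.} This is the converse: the closed span of the three families above is all of $\ker(\mathrm{Int})$. Here I would invoke the kernel theorem of \cite{BB07} (compactly supported version): $\mathrm{Int}(\oldphi,\omega)=0$ if and only if
\begin{displaymath}
D\omega+\pi_X^*\oldphi=0\quad\text{and}\quad \pi_{X*}\omega=0,
\end{displaymath}
where $D$ is the Rumin differential. Recall that $D\omega=d(\omega+\xi)$, with $\xi$ the unique vertical form making $d(\omega+\xi)$ vertical. Given a kernel element, set $\omega'=\omega+\xi$; this differs from $\omega$ by a vertical form, i.e.\ by an element of the second family. The pair $(\oldphi,\omega')$ then satisfies $d\omega'=-\pi_X^*\oldphi$. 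Next I would use the Gysin/Leray structure of the sphere bundle $\P_X\to X$, with compactly supported cohomology, to write
\begin{displaymath}
\omega'=-\pi_X^*\beta+d\theta+(\text{vertical})
\end{displaymath}
for some $\beta$ with $d\beta=\oldphi$, using the condition $\pi_{X*}\omega=0$ to kill the class obstructing this decomposition. This expresses the kernel element as a sum of members of the three families.

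The main obstacle is this last cohomological decomposition, especially keeping control of compact supports throughout.
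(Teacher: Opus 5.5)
Your proposal is correct and follows the route the paper intends. The paper gives no separate argument and simply appeals to the kernel theorem of \cite{BB07}; your dualization (the image is $\ker(\mathrm{Int})^{\perp}$, cut out by your three families) is how that theorem yields the statement. The step you flag goes through with compactly supported Gysin sequences: $\pi_{X*}\omega=0$ kills the only possible obstruction $[\oldphi]\in e\cup H^0_c(X)$, and then $\omega'+\pi_X^*\beta$ is closed with vanishing fiber integral, hence equal to $\pi_X^*\beta'+d\theta$ with $d\beta'=0$, so replacing $\beta$ by $\beta-\beta'$ needs no extra vertical term. One correction: ``vertical'' must mean the ideal generated by the contact form $\alpha$ alone, because with $d\alpha$ included the Rumin normalization you quote becomes vacuous in degree $n$.
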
 
If $(C,T)$ is the pair corresponding to $\zeta \in \mathcal V^{-\infty}(X)$ and $\mu \in \mathcal V_\cpt^{\infty}(X)$ is given by the forms $(\oldphi,\omega)$, then 
\begin{equation}
	\langle \zeta,\mu\rangle=C(\oldphi)+T(\omega). 
\end{equation}
The \emph{wave front set} of $\zeta$ is defined by~$\WF(\zeta) = (\WF(C),\WF(T))$. Given closed conical sets~$\Lambda \subset T^*X \setminus \underline 0$ and~$\Gamma \subset T^*\P_X \setminus \underline 0$, we denote by~$\mathcal V^{-\infty}_{\Lambda,\Gamma}(X)$ the set of generalized valuations~$\zeta$ with~$\WF(\zeta) \subset (\Lambda,\Gamma)$. This space is endowed with the H\"ormander topology, see \cite{brouder_dang_helein}. 

The space $\mathcal V^\infty(X)$ also injects in this space of currents. If $\mu \in \mathcal V^\infty(X)$ is given by the forms $(\oldphi,\omega)$ as in~\eqref{eq:def-val}, then by \cite{bernig_quat09} we have:
\begin{displaymath}
	C= \pi_{X*}\omega \in C^\infty(X) \subset \cur{n}, \quad T=s^*(D\omega+\pi_X^*\oldphi) \in \Omega^{n}(\P_X) \subset \cur{n-1}[\P_X].
\end{displaymath}
Here $D:\Omega^{n-1}(\P_X) \to \Omega^n(\P_X)$ is the Rumin operator (see \cite{rumin94}), and $s:\P_X \to \P_X$ is the involution $(x,[\xi]) \mapsto (x,[-\xi])$.  

The following lemma will be useful in the proof of~\Cref{main:pushforward} in~\Cref{sec:pushforward}.
\begin{lem} \label{lemma_t_equals_0}
	Assume that~$X$ is connected and let $\psi\in\gval$ be a generalized valuation with $T=0$. Then~$\psi$ is a multiple of the Euler characteristic. 
\end{lem}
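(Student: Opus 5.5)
We work throughout with the description of $\psi$ as a pair of currents $(C,T)$ from \Cref{prop:gval-as-cur}, where $T$ is a Legendrian cycle on $\P_X$ and $\pi_{X*}T=\partial C$. The hypothesis $T=0$ gives $\partial C=\pi_{X*}T=0$. So $C$ is an $n$-current on the connected oriented $n$-manifold $X$ with vanishing boundary.

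The first step is to show that such a current is a constant multiple of the fundamental current $\intcur{X}$. An $n$-current on $X^n$ is the same as a distribution $u$ through the pairing $\langle C,\oldphi\rangle=u(\oldphi)$. The condition $\partial C=0$ says $u(d\eta)=0$ for every $\eta\in\Omega^{n-1}_\cpt(X)$.
\begin{itemize}
\item Locally, choose a chart and take $\eta=g\,dx_1\wedge\dots\widehat{dx_i}\dots\wedge dx_n$, so that $d\eta=\pm\partial_i g\,dx_1\wedge\dots\wedge dx_n$. This shows that all distributional partial derivatives of $u$ vanish, hence $u$ is locally constant.
\item Since $X$ is connected, $u$ is a constant $c$, that is, $C=c\intcur{X}$.
\end{itemize}
Alternatively one can cite de Rham duality: $H^n_\cpt(X)\cong\C$, and $n$-cycles are dual to $H^n_\cpt(X)$ in the current homology.

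The second step identifies the Euler characteristic. The Euler characteristic $\chi$ is a smooth valuation, and the paper describes how smooth valuations sit inside the space of currents. We need to compute the pair $(C_\chi,T_\chi)$ attached to $\chi$.

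There is a subtlety in this description. With the pairing $\langle\zeta,\mu\rangle=C(\oldphi)+T(\omega)$, the generalized valuation $\chi$ corresponds to the functional $\mu\mapsto\int_X\chi\cdot\mu=\mu(X)$. If $\mu$ is represented by $(\oldphi,\omega)$ with compact support, then $\mu(X)=\int_X\oldphi$, because the normal cycle of the whole manifold $X$ is zero. Hence $\chi$ corresponds to the pair $(\intcur{X},0)$. This is consistent with the stated formula $C=\pi_{X*}\omega$, $T=s^*(D\omega+\pi_X^*\oldphi)$, since for $\chi$ one can take $\oldphi=0$ and an $\omega$ with $D\omega=0$ and $\pi_{X*}\omega=1$ (e.g.\ the Chern–Gauss–Bonnet form), which is a local check.

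Combining the two steps, $\psi\mapsto(C,T)$ is injective, and $(C,T)=(c\intcur{X},0)=c\cdot(\intcur{X},0)$. Therefore $\psi=c\chi$.

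The main point needing care is the identification of the image of $\chi$ as $(\intcur{X},0)$, rather than the distribution argument, which is routine. The cleanest route is the duality computation $\langle\chi,\mu\rangle=\mu(X)=\int_X\oldphi$ given above. This is valid for compactly supported $\mu$ by approximating $X$ from inside by compact domains $P\in\submfd$ that contain the support, so that $\int_{N(P)}\omega=0$.
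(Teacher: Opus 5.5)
Your proof is correct and follows the paper's argument: $\partial C=\pi_{X*}T=0$ and the constancy theorem (which you reprove by showing the distributional derivatives vanish) give $C=c\intcur{X}$, and then $\langle\psi,\mu\rangle=c\int_X\oldphi=c\int_X\chi\cdot\mu$ identifies $\psi$ with $c\,\chi$, exactly as in the paper. Your consistency remark that $\chi$ can be represented by $(0,\omega)$ with $D\omega=0$ holds only locally. On a compact $X$ with $\chi(X)\neq0$ it would force $\chi(X)=\int_X 0=0$; the Chern--Gauss--Bonnet pair instead has $\oldphi$ equal to the Pfaffian and $D\omega+\pi_X^*\oldphi=0$. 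The main argument does not use this remark.
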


\proof 
Since $\partial C=\pi_{X*}T=0$, the constancy theorem ~\cite[(4.1.7)]{Fed69} implies that $C=\lambda \intcur{X}$. As noted above, a compactly supported smooth valuation $\mu$ can be represented by a pair $(\oldphi,\omega)$ of compactly supported forms. Therefore,
\begin{displaymath}
	\langle \psi,\mu\rangle=C(\oldphi)+T(\omega)=\lambda \int_X \oldphi=\lambda \int_X \mu=\lambda \int_X \chi \cdot \mu=	\langle \PD(\lambda \chi),\mu\rangle.
\end{displaymath}  
\endproof

\subsection{Constructible functions as generalized valuations}\label{sec:CF-and-gval}
In~\cite{alesker_val_man4}, Alesker defines an embedding~$\CF \hookrightarrow\gval$ of the group of constructible functions into the space of generalized valuations which extends to an embedding~$\emCFgval:\CF \otimes \C \hookrightarrow\gval$ with sequentially dense image~\cite[Prop.~8.2.2]{alesker_val_man4}. The construction of this embedding uses the characteristic cycle construction. Following \Cref{prop:gval-as-cur}, the generalized valuation~$\emCFgval[\phi]\in\gval$ associated to~$\phi\in \CF$ is more naturally described using the \emph{normal cycle}~$N(\phi)$, which is a subanalytic Legendrian~$(n-1)$-cycle on the cosphere bundle~$\P_X$; see~\cite{Fu94}. 

We recall the relationship between normal and characteristic cycles detailed in~\cite[Sec.~4.7]{Fu94}. Following~\eqref{eq:exp-CF}, write~$\phi = \sum_{i\in I} m_i\1_{S_i}$ for integers~$m_i\in\Z$ and locally closed subanalytic subsets~$S_i$ of~$X$.
Since~$X$ is oriented, the constructible function~$\phi$ can thus naturally be seen as a current~$\basecur{\phi}$ integrating compactly supported differential~$n$-forms on the interior of the subanalytic subsets~$S_i$ that are~$n$-dimensional, and this current is clearly independent of the decomposition of~$\phi$ chosen such that~\eqref{eq:exp-CF} holds. 

Denoting~$q_X: T^*X\setminus \zero_X \to \P_X$ the quotient map and $j_X : T^*X\setminus \zero_X \hookrightarrow T^*X$ the inclusion, we can define the \emph{conification} of a subanalytic current~$T\in\cur{k}[\P_X]$ as follows. If~$S\in\scur{k}[T^*X\setminus\zero_X]$ is a subanalytic current defined by conical strata, then these strata are also subanalytic in~$T^*X$ by~\cite[Prop.~8.3.8~(i)]{KS90}, so that~$S$ naturally defines a subanalytic current on~$T^*X$, denoted by~$j_{X*}S$. We define the \emph{conification} of a subanalytic current~$T\in\scur{k}[\P_X]$ as the current~$\cone(T) = j_{X*}q_X^*T\in\scur{k+1}[T^*X]$. 

With these notations, the characteristic and normal cycles are related by the formula:
\begin{equation}\label{eq:char-to-normal-cycle}
    \CC(\phi) = \basecur{\phi} + s_*\cone(N(\phi)),
\end{equation}
where $s\colon T^*X \to T^*X$ denotes the multiplication by~$-1$ in the fibers of~$T^*X$. 
\begin{defi}\label{def:emCFgval}
    The generalized valuation~$\emCFgval[\phi]$ associated to~$\phi\in \CF$ corresponds to the pair of currents~$(C,T) = (\basecur{\phi},N(\phi))\in \cur{n} \oplus \cur{n-1}[\P_X]$.
\end{defi}
Note that the above definition of the embedding~$\emCFgval:\CF \hookrightarrow\gval$ corresponds to the sign conventions of \cite{Ale14}.

\begin{ex} \label{ex_evaluate_on_constructible}
    Let~$\phi=\1_S$ for some subanalytic subset~$S\subset X$. The generalized valuation~$\emCFgval[\phi]\in\gval[X]$ is represented by the pair of currents~$(\intcur{\Int(S)},N(S))$ where~$\Int(S)$ denotes the interior of~$S$ inside~$X$ (which is empty when~$\dim(S)<n$). If in addition~$S$ is a compact submanifold with corners, then for each $\mu \in \mathcal V_\cpt^\infty(X)$ we have~$\langle [\phi],\mu\rangle=\mu(S)$. This motivates to write $\mu(\phi)$ instead of~$\langle [\phi],\mu\rangle$ for $\phi \in \CF$. To spell this out: compactly supported smooth valuations can be evaluated on constructible functions.
\end{ex}

\section{Wave front set of subanalytic currents}\label{sec:WF-subanalytic-currents}
When considering pullbacks of subanalytic currents, we will use two notions of transversality. One is formulated in terms of geometric transversality of real analytic maps with the strata defining the current and the other formulated in terms of wave front sets. In this section, we show using desingularization that one can always stratify the support of a subanalytic current so that its wave front set is contained in the union of conormal bundles to the strata. For such a stratification, the first notion of transversality implies the second.

\begin{defi}
	Let~$T$ be a subanalytic current on~$X$. A locally finite stratification~$\{S_\alpha\}_{\alpha\in A}$ of~$X$ defining~$T$ is called \emph{angular} if we have:
	\begin{equation*}
		\WF(T) \subset \bigcup_{\alpha\in A} T^*_{S_\alpha} X \setminus \zero_{S_\alpha}.
	\end{equation*}
\end{defi}

Note that if a given stratification is angular, then so is any refinement of this stratification. In particular, one can always refine an angular stratification to make it also satisfy Whitney's conditions. Note also that angularity is preserved under diffeomorphisms.

\begin{lem} \label{lemma_product_angular_stratifications}
	If $T_1$ is a subanalytic current on $X_1$ with an angular stratification $\{S^1_\alpha\}_{\alpha \in A}$, 
	and $T_2$ is a subanalytic current on $X_2$ with an angular stratification $\{S^2_\beta\}_{\beta \in B}$, 
	then the product stratification $\{S^1_\alpha \times S^2_\beta\}_{\alpha \in A,\beta \in B}$ is an angular stratification of the subanalytic current $T_1 \boxtimes T_2$ on $X_1 \times X_2$.
\end{lem}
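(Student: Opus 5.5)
The plan is to combine the standard wave front set estimate for exterior products of distributions (Hörmander, \cite[Thm.~8.2.9]{Hor03}) with the structure of conormal bundles of product submanifolds.

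\emph{The product stratification defines $T_1\boxtimes T_2$.} First we check that $\{S^1_\alpha\times S^2_\beta\}$ is a locally finite subanalytic stratification of $X_1\times X_2$. Writing $T_1=\sum_{\alpha\in A_k} m_\alpha\intcur{S^1_\alpha}$ and $T_2=\sum_{\beta\in B_l} m'_\beta\intcur{S^2_\beta}$, bilinearity of the exterior product and Fubini give
\begin{equation*}
T_1\boxtimes T_2=\sum_{\alpha\in A_k,\ \beta\in B_l} m_\alpha m'_\beta\,\intcur{S^1_\alpha\times S^2_\beta}.
\end{equation*}
Here the strata $S^1_\alpha\times S^2_\beta$ carry the product orientation and have dimension $k+l$. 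So the product stratification defines $T_1\boxtimes T_2$.

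\emph{Wave front set estimate.} Hörmander's theorem, applied in local coordinates to the coefficient distributions of the currents, gives
\begin{equation*}
\WF(T_1\boxtimes T_2)\subset \big(\WF(T_1)\times\WF(T_2)\big)\cup\big((\supp T_1\times\zero_{X_1})\times\WF(T_2)\big)\cup\big(\WF(T_1)\times(\supp T_2\times\zero_{X_2})\big).
\end{equation*}
Here we identify $T^*(X_1\times X_2)=T^*X_1\times T^*X_2$.

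\emph{Conormal bundles of product strata.} The key identity is
\begin{equation*}
T^*_{S^1_\alpha\times S^2_\beta}(X_1\times X_2)=T^*_{S^1_\alpha}X_1\times T^*_{S^2_\beta}X_2.
\end{equation*}
Take a covector $(x_1,\xi_1;x_2,\xi_2)$ in one of the three pieces of the estimate.

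In the first piece, angularity of each factor puts $(x_1,\xi_1)\in T^*_{S^1_\alpha}X_1$ and $(x_2,\xi_2)\in T^*_{S^2_\beta}X_2$ for the strata containing $x_1$ and $x_2$. Hence the covector lies in the conormal bundle of $S^1_\alpha\times S^2_\beta$. It is nonzero because $\xi_1\neq0$.

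In the second piece, $\xi_1=0$, which lies trivially in the conormal fiber of whichever stratum $S^1_\alpha$ contains $x_1$, since the zero covector is always conormal. Also $\xi_2\neq0$ is conormal to the stratum $S^2_\beta$ containing $x_2$. So again the covector lies in $T^*_{S^1_\alpha\times S^2_\beta}\setminus\zero$. The third piece is symmetric.

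The strata partition $X_1$ and $X_2$, so every $x_1$ and $x_2$ lies in some stratum. This shows the union over all product strata covers $\WF(T_1\boxtimes T_2)$, as required.

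\emph{Main obstacle.} The only real point is justifying the product wave front estimate for currents rather than scalar distributions. I would handle it by working in product charts. There $T_1\boxtimes T_2$ has coefficients that are exterior products of the coefficients of $T_1$ and $T_2$ (up to sign). The wave front set of a current is the union of those of its coefficients, so the scalar result applies componentwise.
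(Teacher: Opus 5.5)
Your proposal is correct and follows the same route as the paper. The paper's proof just cites Hörmander's estimate $\WF(T_1\boxtimes T_2)\subset(\WF(T_1)\times\WF(T_2))\cup(\WF(T_1)\times\zero_{X_2})\cup(\zero_{X_1}\times\WF(T_2))$ from \cite[Thm.~8.2.9]{Hor03} and calls the lemma obvious. Your extra details fill in exactly what that ``obvious'' leaves implicit: the identity $T^*_{S^1_\alpha\times S^2_\beta}(X_1\times X_2)=T^*_{S^1_\alpha}X_1\times T^*_{S^2_\beta}X_2$, the componentwise reduction from currents to scalar distributions, and the check that the product stratification defines $T_1\boxtimes T_2$. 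For that last point, note that the other $(k+l)$-dimensional product strata simply receive multiplicity zero.
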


\proof
This is obvious from the inclusion 
\begin{displaymath}
	\WF(T_1 \boxtimes T_2) \subset (\WF(T_1) \times \WF(T_2)) \cup (\WF(T_1) \times \zero_{X_2}) \cup (\zero_{X_1} \times \WF(T_2)),
\end{displaymath}
see \cite[Thm.~8.2.9]{Hor03}.
\endproof

\begin{defi}\label{def:transversality-current}
    We say that a real analytic map~$f\colon X \to Y$ is \emph{angularly transverse} to a subanalytic current~$T\in\scur{k}[Y]$ if there exists an angular Whitney stratification of~$Y$ defining~$T$ such that~$f$ is transverse to each stratum.
\end{defi}

If~$f$ is angularly transverse to the subanalytic current~$T$, then~$f$ and~$\Gamma = \WF(T)$ satisfy the assumptions of~\Cref{prop:pullback-cur}. We now prove that angular stratifications always exist.
\begin{prop}\label{prop:WF-subanalytic}
    Any subanalytic current admits an angular stratification.
\end{prop}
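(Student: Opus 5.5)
The plan is to reduce to the case of a single oriented subanalytic submanifold with smooth boundary behaviour, using resolution of singularities, and then to compute wave front sets of pushforwards.

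\textbf{Reduction to one stratum.} Write $T=\sum_{\alpha\in A_k} m_\alpha\intcur{S_\alpha}$ for some defining stratification. Wave front sets are local and subadditive, $\WF(T_1+T_2)\subset\WF(T_1)\cup\WF(T_2)$. Locally finite common refinements of subanalytic stratifications exist, and refinements of angular stratifications remain angular. Hence it suffices to treat a single current $\intcur{S}$, where $S$ is a relatively compact oriented $k$-dimensional subanalytic submanifold, and to produce a stratification $\mathcal{S}$ of $X$ with $\WF(\intcur{S})\subset\bigcup_{S'\in\mathcal{S}}T^*_{S'}X\setminus\zero$. The final stratification of $X$ is then a common refinement of those obtained for the finitely many $S_\alpha$ meeting a given compact set.

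\textbf{Desingularization.} By the uniformization theorem for subanalytic sets (Hironaka, Bierstone--Milman), there is a real analytic manifold $M$ of dimension $k$ and a proper real analytic map $\pi\colon M\to X$ with $\pi(M)=\closure{S}$. Refining further, one may take a closed subanalytic set $D\subset M$ of dimension $<k$ such that $\pi|_{M\setminus D}$ is a diffeomorphism onto $S$ minus a lower-dimensional set. Since $\intcur{S}$ does not see sets of dimension $<k$, we get $\intcur{S}=\pi_*(\1_U\intcur{M})$ for an open subanalytic $U\subset M$ with the induced orientation. After a further blow-up we may arrange that $\del U$ is contained in a normal crossings divisor, or at least in a finite union of analytic submanifolds. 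Then $\1_U\intcur{M}$ is a current on $M$ whose wave front set lies in the conormals of the strata of the stratification of $M$ induced by the divisor. This is a standard computation: locally the current is the indicator of a quadrant $\{x_1>0,\dots,x_j>0\}$, whose wave front set consists of covectors conormal to the coordinate faces.

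\textbf{Pushforward.} For proper maps, the pushforward satisfies
\[
\WF(\pi_*R)\subset\{(\pi(p),\eta): (p,d\pi^*\eta)\in\WF(R)\cup\zero_M\}
\]
(H\"ormander, Thm.~8.2.13 adapted to currents). Next, choose Whitney-type stratifications $\{M_\beta\}$ of $M$, refining the divisor strata, and $\{S'\}$ of $X$, refining $\closure{S}$ and $\pi(D)$, such that $\pi$ maps each $M_\beta$ submersively onto some stratum $S'$. This is a Thom-type stratification of a proper subanalytic map and exists by the standard stratification theorems (KS90, Ch.~8). Now take $\eta$ with $d\pi^*\eta\in T^*_{M_\beta}M$ at $p\in M_\beta$ and $\pi(M_\beta)\subset S'$. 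Then $\eta$ annihilates $d\pi(T_pM_\beta)=T_{\pi(p)}S'$, so $\eta\in T^*_{S'}X$. If $\eta\neq0$, this places it in $T^*_{S'}X\setminus\zero$, which gives angularity.

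\textbf{Main obstacle.} The delicate step is justifying the pushforward wave front estimate when $d\pi^*\eta=0$, i.e.\ at the zero-section contribution. Such $\eta$ is conormal to $d\pi(T_pM)$, hence conormal to the stratum image by the same argument, so the estimate still lands in the union of conormals. Care is also needed to verify that desingularization preserves the identity of currents up to orientation signs. I would first try to handle both points locally, using the compactness of $\pi$ over compact sets.
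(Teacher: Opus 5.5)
Your architecture is the paper's. You realize $T$ as a proper pushforward $\pi_*R$ of a current that visibly has an angular stratification, and you transport angularity through $\WF(\pi_*R)\subset\{(\pi(p),\eta) : (p,d\pi^*\eta)\in\WF(R)\cup\zero\}$, using stratifications for which $\pi$ maps strata submersively onto strata. This is exactly \Cref{lem:pushforward}. The zero-section contribution you call the main obstacle is harmless: the argument you give disposes of it, just as in the paper. Orientation signs are also irrelevant, since angularity does not see signs or multiplicities.

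The genuine gap is the desingularization step.
\begin{itemize}
\item \textbf{Generic injectivity.} The uniformization theorem only asserts a proper analytic $\pi\colon M\to X$ with $\pi(M)=\closure{S}$ and $\dim M=k$. It does not make $\pi$ generically injective, so a $D$ of dimension $<k$ with $\pi|_{M\setminus D}$ a diffeomorphism onto most of $S$ need not exist. You must select sheets by some open $U\subset M$.
\item \textbf{The boundary $\partial U$ is only subanalytic.} That boundary typically contains pieces of sets like $\pi^{-1}(\pi(\mathrm{Crit}\,\pi))$, so it is in general subanalytic but not semianalytic in $M$.
\item \textbf{No routine blow-up fixes this.} Embedded resolution applies to analytic or semianalytic sets. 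A subanalytic set of positive codimension need not lie in any proper analytic subset; Osgood's example $(x,y)\mapsto(x,xy,xye^{y})$ shows this. So ``after a further blow-up $\partial U$ lies in a normal crossings divisor'' is not available as a standard step.
\item \textbf{Rectilinearization is only local.} Hironaka/Bierstone--Milman rectilinearization does achieve this, but only locally, via finitely many local blowings-up that do not assemble into one proper map. Producing a single proper analytic map under which $T$ becomes a pushforward of integration over relative interiors of submanifolds with corners is exactly the global smoothing theorem of Bierstone and Parusi\'nski (\Cref{thm:global-smoothing}). That is the deep input the paper uses, and your sketch passes over it as routine.
\end{itemize}

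You can close the gap in one of two ways.
\begin{itemize}
\item Cite that theorem directly. It makes your uniformization, sheet-selection and normal-crossings steps unnecessary, and integration over relative interiors of submanifolds with corners trivially admits an angular stratification.
\item Run local rectilinearization and glue. Angularity is local and stable under refinement, so angular stratifications on a locally finite cover merge into a global one by a common refinement. You would then also have to handle the non-properness of the local blowing-up charts, for instance with cutoff functions, and express the current as a sum of pushforwards of quadrant-type currents; your proposal does not address this.
\end{itemize}
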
 
We do not know if assuming the transversality of~$f$ with the strata of \emph{some} stratification entails the existence of an angular stratification with the same property.

Our proof of~\Cref{prop:WF-subanalytic} is a combination of a functoriality property (\Cref{lem:pushforward}) and of global smoothings of subanalytic sets proven by Bierstone and Parusi\'nski (\Cref{thm:global-smoothing}).
\begin{lem}\label{lem:pushforward}
    If a subanalytic current admits an angular stratification, then so does its pushforward by a proper real analytic map.
\end{lem}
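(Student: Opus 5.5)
The approach is to combine Hörmander's bound for the wave front set of a pushforward with a stratification of the map $f$. Let $T\in\scur{k}[X]$ with an angular stratification $\{S_\alpha\}_{\alpha\in A}$, and let $f\colon X\to Y$ be proper and real analytic. Since $f$ is proper, \cite[Thm.~8.2.13]{Hor03} (dual to the pullback bound of \Cref{prop:pullback-cur}) gives
\begin{equation*}
    \WF(f_*T)\subset\big\{(f(x),\eta)\ :\ \eta\neq 0,\ df_x^*\eta\in \WF(T)_x\cup\{0\}\big\}.
\end{equation*}
By angularity, whenever $x\in S_\alpha$ we have $\WF(T)_x\subset (T^*_{S_\alpha}X)_x$. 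Hence every $(y,\eta)\in\WF(f_*T)$ is of the form $y=f(x)$ with $x\in S_\alpha$ and $df_x^*\eta$ vanishing on $T_xS_\alpha$. Equivalently, $\eta$ annihilates $df_x(T_xS_\alpha)$.

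Next I would choose compatible stratifications of $X$ and $Y$ using the subanalytic stratification theorem for proper maps (e.g.\ \cite[Sec.~8.3]{KS90}, or Hironaka's stratification of subanalytic maps). This gives a locally finite subanalytic stratification $\{S'_\gamma\}$ of $X$ refining $\{S_\alpha\}$ and a Whitney stratification $\{R_\beta\}$ of $Y$ with two properties. First, each $f(S'_\gamma)$ is a stratum $R_\beta$ and $f|_{S'_\gamma}\colon S'_\gamma\to R_\beta$ is a submersion. Second, $\{R_\beta\}$ is compatible with the subanalytic set $f(\supp T)$ and defines $f_*T$. The second property is possible because $f_*T$ is subanalytic by \Cref{lem:cur-BM-pushforward}. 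One refines further so that the multiplicity of $f_*T$ is constant on each $k$-dimensional stratum; this multiplicity is a locally constant sum of the multiplicities of $T$ over the finitely many fiber points in the $k$-dimensional strata $S'_\gamma$ mapping diffeomorphically, counted with orientation signs. Since refinement preserves angularity, $\{S'_\gamma\}$ is still angular for $T$.

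With these choices, the conclusion is immediate. If $x\in S'_\gamma$ and $f(S'_\gamma)=R_\beta$, then $df_x(T_xS'_\gamma)=T_{f(x)}R_\beta$. So any $\eta$ as above lies in $(T^*_{R_\beta}Y)_{f(x)}\setminus\{0\}$, which shows $\WF(f_*T)\subset\bigcup_\beta T^*_{R_\beta}Y\setminus\zero_{R_\beta}$. Thus $\{R_\beta\}$ is an angular stratification defining $f_*T$.

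I expect the main obstacle to be the middle step: producing a single stratification pair that is simultaneously a stratified submersion for $f$, a refinement of the given angular stratification, and defines $f_*T$. I would first invoke the existence of subanalytic stratifications of proper maps compatible with a prescribed locally finite family of subanalytic sets on $X$ and on $Y$. I would then check, by working over each $k$-dimensional stratum $R_\beta$ where $f$ restricted to the preimage $k$-strata is a finite covering, that the pushforward current has constant multiplicity there. That multiplicity is a signed sum of the constant multiplicities of $T$, and the $(<k)$-dimensional strata contribute nothing to the current.
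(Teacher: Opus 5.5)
Your proposal is correct and follows essentially the same route as the paper. Both bound $\WF(f_*T)$ by the pushforward of $\WF(T)\cup\zero_X$, refine the angular stratification so that $f$ restricts to submersions $S'_\gamma\to R_\beta$ onto strata of $Y$, and conclude from $df_x(T_xS'_\gamma)=T_{f(x)}R_\beta$. Your additional check that $\{R_\beta\}$ actually defines $f_*T$, via the signed degree of the finite coverings over $k$-dimensional strata, is a point the paper leaves implicit. That check follows automatically from the stratified-submersion property, so you need neither a further refinement of $Y$ nor an appeal to \Cref{lem:cur-BM-pushforward}, which is stated only for cycles.
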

\begin{proof}
    Let~$f\colon X\to Y$ be a proper analytic map between real analytic manifolds and~$T$ be a subanalytic current on~$X$. Denote by~$\mathcal S= (S_\alpha)_{\alpha\in A}$ an angular stratification of~$X$ defining~$T$. Up to refining~$\mathcal S$ if necessary, one can consider a stratification $\mathcal R = (R_\beta)_{\beta\in B}$ of~$Y$ such that each $f^{-1}(R_\beta)$ is a union of strata of~$\mathcal S$ and each restriction~$f_{|S_\alpha} : S_\alpha \to R_\beta$ is an analytic submersion; see e.g.~\cite[Part~I, Sec.~1.7]{GM88}. 
    By~\cite[Chap.~VI, Prop.~3.9]{guillemin77} we have that:
    \begin{equation}\label{eq:pushforward-WF}
        \WF(f_*T) \subset f_*\WF(T) := \tau\left(df^{-1}\left(\WF\left(T \right) \cup \zero_X \right)\right)\setminus \zero_Y.
    \end{equation}
    It then follows directly from the facts that~$\mathcal S$ is angular and that~$f_{|S_\alpha}:S_\alpha\to R_\beta$ is a submersion that the fiber of the right-hand side above any~$y\in R_\beta$ is contained in~$(T^*_{R_\beta}Y)_y$.
\end{proof}

We only state a special case of the result of Bierstone and Parusi\'nski which is sufficient for our purpose.
\begin{thm}[\textnormal{\cite[Thm.~1.2]{bierstone18}}]
    \label{thm:global-smoothing}
    Let~$X$ be a real analytic manifold of dimension~$n$ and let~$S$ be a closed subanalytic subset of~$X$ of dimension~$k$. Then, there exist a real analytic manifold~$\tilde X$ of dimension~$n$, a proper real analytic map~$\phi : \tilde X\to X$ such that~$\phi$ induces an isomorphism from a disjoint union of relative interiors of $k$-dimensional submanifolds with corners of~$\tilde X$ onto an open subanalytic subset~$U\subset S$ such that~$\dim(S\setminus U) < k$.
\end{thm}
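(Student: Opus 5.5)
This result is quoted from~\cite{bierstone18}. Were we to prove it ourselves, we would use resolution of singularities together with Hironaka's rectilinearization. The idea is to realize the ``top-dimensional part'' of~$S$ as a union of orthants of a smooth $k$-dimensional submanifold after a proper modification of the ambient space.

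\emph{Step 1.} Replace~$S$ by a real analytic hull. Let~$S_{\mathrm{reg}}\subset S$ be the open subanalytic subset of points near which~$S$ is a $k$-dimensional analytic submanifold. By standard dimension theory of subanalytic sets~\cite[Chap.~8]{KS90}, $\dim(S\setminus S_{\mathrm{reg}})<k$. Around each point we then take the smallest germ of analytic set~$Z$ containing~$S_{\mathrm{reg}}$. Globalizing this is the first technical point. One works with a coherent analytic subset~$Z\subset X$ of dimension~$k$ that contains~$S$ up to a set of dimension~$<k$, obtained after a locally finite cover and the gluing techniques of Bierstone--Milman. A cleaner alternative is to apply the uniformization theorem directly: it gives a proper analytic map from a $k$-dimensional manifold onto~$S$, whose generic-rank locus yields the desired open set~$U$.

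\emph{Step 2.} Embedded resolution. Apply Hironaka's embedded resolution of singularities (Bierstone--Milman) to~$Z\subset X$. This produces a proper analytic map~$\sigma\colon \tilde X\to X$, a composite of blowups with smooth centres, hence of dimension~$n$. It is an isomorphism over the regular part of~$Z$ away from the centres, and the strict transform~$Z'$ of~$Z$ is a smooth $k$-dimensional submanifold.

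\emph{Step 3.} Within~$Z'$, the set~$S' = \overline{\sigma^{-1}(S_{\mathrm{reg}})}\cap Z'$ is subanalytic of full dimension, and its frontier~$\partial S'$ in~$Z'$ has dimension~$<k$. Using rectilinearization (equivalently, principalizing an analytic hypersurface of~$Z'$ containing~$\partial S'$), we blow up further with centres inside~$Z'$, extended to blowups of~$\tilde X$ with smooth centres. After this, that hypersurface becomes a normal crossings divisor in~$Z'$. The set~$S'$ is then locally a union of closed orthants in the normal-crossings coordinates, so it is a union of submanifolds with corners. Their relative interiors map isomorphically onto an open subset~$U\subset S_{\mathrm{reg}}$. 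The complement~$S\setminus U$ lies in the image of the exceptional loci and the singular set, hence has dimension~$<k$.

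The main obstacle is Step~3, together with the globalization in Step~1. A subanalytic set is not defined by analytic inequalities, so one has to ensure that after resolution~$S'$ becomes semianalytic with normal-crossings boundary. We would try to use Hironaka's rectilinearization theorem, which says that subanalytic sets become locally finite unions of quadrants after local blowups. The delicate part is making these local blowups global, which is precisely where the Bierstone--Parusiński argument invests its effort.
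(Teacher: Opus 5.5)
The paper gives no proof of this statement. It imports it from \cite{bierstone18} and uses it as a black box, so your opening sentence is all that is needed. The sketch you add, however, has a genuine gap at Step~1, and Steps~2--3 depend on it. It is false that a closed subanalytic set of dimension $k$ lies, even locally and even up to a subset of dimension $<k$, in an analytic set of dimension $k$. Take Osgood's map $g(x,y)=(x,xy,xye^{y})$ and the compact subanalytic surface $S=g([-1,1]^2)\subset\R^3$. If $F=\sum_d F_d$ is a convergent series at $0$ with homogeneous parts $F_d$, then $F\circ g=\sum_d x^d F_d(1,y,ye^{y})$. Since $e^{y}$ is transcendental, $F\circ g\equiv 0$ forces every $F_d$, hence $F$, to vanish. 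So the germ of $S$ at $0$ lies in no proper analytic germ of $(\R^3,0)$. Because $g$ is an immersion on $\{x\neq 0\}$, $S$ is pure $2$-dimensional. Hence any closed analytic $Z$ containing $S$ off a set of dimension $\le 1$ contains $S$ and is $3$-dimensional at $0$.

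Consequently the smallest analytic germ containing $S_{\mathrm{reg}}$ at $0$ is all of $(\R^3,0)$, and Step~2 has no $Z$ to resolve. The ``equivalently'' in Step~3 fails for the same reason. A $3$-dimensional subanalytic subset of $\R^3$ with $S$ in its frontier has a frontier contained in no analytic hypersurface germ at $0$, even though rectilinearization by local blowups still applies to it. What Steps~2--3 actually cover is the semianalytic case, where hulls of the right dimension exist, and that is the easy case.

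Your uniformization alternative does not repair this. The proper surjection from a $k$-manifold $N$ is in general not injective on its generic-rank locus. If you choose one sheet over each component to restore injectivity, the closures of these sheets are again merely subanalytic in $N$. You are then back to smoothing a full-dimensional subanalytic subset of $N$, which is the same rectilinearization problem. It also gives no analytic subset of $X$ to feed into Step~2.

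The missing ingredient is therefore exactly the passage from subanalytic to semianalytic, and that passage is the content of \cite{bierstone18}. Your last paragraph correctly locates the difficulty. The fix you propose is to make the local blowups of rectilinearization into global blowups of $X$, and that kind of globalization is not available. Bierstone and Parusi\'nski note that a positive answer to the question of flattening a proper analytic map by global blowings-up of the target would reduce global smoothing to the semianalytic case. They then show that this question has a negative answer in general. This is why the statement asks only for \emph{some} proper real analytic map $\phi\colon\tilde X\to X$ from an $n$-dimensional manifold, with $U$ reached through a \emph{disjoint union} of relative interiors of submanifolds with corners. It does not ask for a composite of global blowups of $X$ as in your Step~2. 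A proof has to exploit this extra freedom, and your sketch does not say how.
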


\begin{proof}[Proof of~\Cref{prop:WF-subanalytic}]
    Let~$T$ be a subanalytic $k$-current on~$X$. We can assume without loss of generality that~$T$ is the integration over a closed subanalytic subset~$S\subset X$ of dimension~$k$. Note that we will not care about orientations in this proof as the result proven is invariant by a change of sign. We denote by~$\tilde U$ the disjoint union of relative interiors of $k$-dimensional subanalytic submanifolds with corners of~$\tilde X$ given by~\Cref{thm:global-smoothing} and use the same notations as in this statement. The current~$\tilde T$ of integration over~$\tilde U$ being a sum of currents integrating over submanifolds with corners, it clearly admits an angular stratification. Moreover, we claim that~$T = \phi_*\tilde T$. Indeed, for any compactly supported smooth $k$-differential form~$\omega\in\forms[M][\cpt]{k}$, we have:
    \begin{equation*}
         \dualdot{\phi_*\tilde T,\omega} = \dualdot{\tilde T,\phi^*\omega}=\int_{\tilde U} \ \phi^*\omega = \int_{U} \ \omega = \int_{S} \ \omega = \dualdot{T,\omega}.
    \end{equation*}
    Hence the existence of an angular stratification for~$T$ by \Cref{lem:pushforward}.
\end{proof}

\section{Exterior product}\label{sec:exterior-product}
In this section, we prove that the exterior product of generalized valuations restricts to the exterior product of constructible functions, which was claimed in~\cite[Claim~2.1.11]{alesker_intgeo}. 

We follow the notations of \cite{alesker_intgeo, AB09}. Let~$X_1$ and~$X_2$ be two real analytic manifolds and denote~$X=X_1\times X_2$. The projections are denoted by~$\tilde p_i:X_1 \times X_2 \to X_i$ for~$i=1,2$.

Let $\mathcal{M}_1=\{(x_1,x_2,[\xi_1:0])\} \subset \P_X$ and $\mathcal{M}_2=\{(x_1,x_2,[0:\xi_2])\} \subset \P_X$. Consider the oriented blowup~$F:\hat{\P}_X \to \P_X$ along $\mathcal M:=\mathcal{M}_1 \cup \mathcal{M}_2$. The space~$\hat{\P}_X$ is the closure inside~$\P_X \times_X (\P_{X_1} \times \P_{X_2})$ of the set~$\{(x_1,x_2,[\xi_1:\xi_2],[\xi_1],[\xi_2])\}$. Then~$\hat{\P}_X$ is a manifold of dimension $2(\dim X_1+\dim X_2)-1$ with boundary, where the boundary is given by~$\mathcal N:=\mathcal N_1 \cup \mathcal N_2$ with:
\begin{align*}
	\mathcal N_1 & = \{(x_1,x_2,[\xi_1:0],[\xi_1],[\eta_2]\},\\
	\mathcal N_2 & = \{(x_1,x_2,[0:\xi_2],[\eta_1],[\xi_2]\}.
\end{align*}
The map $F:\hat{\P}_X \twoheadrightarrow \P_X$ is given by: 
\begin{displaymath}
	F(x_1,x_2,[\xi_1:\xi_2],[\eta_1],[\eta_2]):=(x_1,x_2,[\xi_1:\xi_2]).
\end{displaymath}
Moreover, we have a map $\Phi:\hat{\P}_X \twoheadrightarrow \P_{X_1} \times \P_{X_2}$ given by:
\begin{displaymath}
	\Phi(x_1,x_2,[\xi_1:\xi_2],[\eta_1],[\eta_2]):=(x_1,[\eta_1],x_2,[\eta_2]).
\end{displaymath}
Finally, we have the obvious maps:
\begin{equation*}
	\begin{tikzcd}
		\P_{X_1} \times X_2 \arrow[r, "i_1",hook]  \arrow[d, "p_1"',two heads]  & \P_X \arrow[d, "\pi_X"',two heads] & X_1 \times \P_{X_2} \arrow[l, "i_2"', hook'] \arrow[d, "p_2"',two heads] \\
		\P_{X_1} & X & \P_{X_2}
	\end{tikzcd}
\end{equation*}
The operation~$\boxtimes:\gval[X_1]\times\gval[X_2]\to \gval[X]$ is defined as follows.
\begin{defi}[{\cite[Sec.~2]{alesker_intgeo}}] \label{def_ext_prod}
    For any pair of generalized valuations~$(\zeta_1,\zeta_2) \in \gval[X_1]\times\gval[X_2]$ respectively described by pairs of currents~$(C_1,T_1)$ and~$(C_2,T_2)$, the \emph{exterior product}~$\zeta_1\boxtimes\zeta_2\in\gval$ is defined by the pair of currents:
    \begin{equation*}
        \begin{split}
            C &= C_1\boxtimes C_2,\\
            T &= F_*\Phi^*(T_1\boxtimes T_2) + (\tilde{p}_1\circ\pi_X)^*C_1 \cap i_{2*}p_2^*T_2 + i_{1*}p_1^*T_1 \cap (\tilde{p}_2\circ\pi_X)^*C_2.
        \end{split}
    \end{equation*}
\end{defi} 

We can now prove the main result of this section:

\begin{prop}[\textnormal{\cite[Claim~2.1.11]{alesker_intgeo}}]\label{prop:exterior-product-CFgval}
    Let~$\phi_1\in\CF[][X_1]$ and~$\phi_2\in\CF[][X_2]$ be constructible functions on real analytic manifolds. We have:
    \begin{equation*}
        \emCFgval[\phi_1]\boxtimes\emCFgval[\phi_2] = \emCFgval[\phi_1\boxtimes\phi_2].
    \end{equation*}
\end{prop}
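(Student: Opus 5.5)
By bilinearity of both sides and by local finiteness, I will reduce to $\phi_1=\1_{S_1}$ and $\phi_2=\1_{S_2}$, where each $S_i$ is a closed subanalytic subset. More precisely, I take each $\phi_i$ to be a finite sum of indicators of closed strata closures; restricting to relatively compact opens handles the general case, since both sides are local in the sense of currents. The $C$-components agree trivially: $\basecur{\phi_1}\boxtimes\basecur{\phi_2}=\basecur{\phi_1\boxtimes\phi_2}$, because the interior of $S_1\times S_2$ equals $\Int(S_1)\times\Int(S_2)$ and orientations are multiplicative. So the whole task is to prove the identity of Legendrian cycles
\begin{equation*}
N(\phi_1\boxtimes\phi_2)=F_*\Phi^*(N(\phi_1)\boxtimes N(\phi_2)) + (\tilde p_1\circ\pi_X)^*\basecur{\phi_1}\cap i_{2*}p_2^*N(\phi_2) + i_{1*}p_1^*N(\phi_1)\cap(\tilde p_2\circ\pi_X)^*\basecur{\phi_2}.
\end{equation*}

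My approach goes through characteristic cycles, via \eqref{eq:char-to-normal-cycle}. On the cotangent side the product formula is clean: $\CC(\phi_1\boxtimes\phi_2)=\CC(\phi_1)\boxtimes\CC(\phi_2)$ in $T^*X=T^*X_1\times T^*X_2$. This is \cite[Prop.~9.4.5]{KS90}, or it follows directly from the stratified definition~\eqref{eq:def-CC}: product Whitney stratifications are Whitney, conormals of products are products of conormals, and local Morse-theoretic multiplicities multiply by the Künneth formula. I then expand each factor as $\CC(\phi_i)=\basecur{\phi_i}+s_*\cone(N(\phi_i))$, where $\basecur{\phi_i}$ lives on the zero section. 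The product splits into four pieces:
\begin{itemize}
\item $\basecur{\phi_1}\boxtimes\basecur{\phi_2}$, which is supported on the zero section of $T^*X$;
\item $\basecur{\phi_1}\boxtimes s_*\cone N(\phi_2)$, which is supported in $\{\xi_1=0\}$;
\item the symmetric term $s_*\cone N(\phi_1)\boxtimes\basecur{\phi_2}$;
\item $s_*\cone N(\phi_1)\boxtimes s_*\cone N(\phi_2)$.
\end{itemize}
Taking the part of $\CC(\phi_1\boxtimes\phi_2)$ off the zero section and descending via $q_X$ (i.e.\ inverting $\cone$) gives $N(\phi_1\boxtimes\phi_2)$. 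It therefore suffices to identify the descent of each of the last three pieces with the corresponding term of $T$ in \Cref{def_ext_prod}. The involution $s$ commutes with everything, so it can be ignored.

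The middle two pieces are the easy ones. The cycle $\basecur{\phi_1}\boxtimes\cone N(\phi_2)$ is conical and lies over $\mathcal M_2$. Its image in $\P_X$ is exactly $i_{2*}\bigl(\basecur{\phi_1}\boxtimes N(\phi_2)\bigr)$, and this equals $(\tilde p_1\circ\pi_X)^*\basecur{\phi_1}\cap i_{2*}p_2^*N(\phi_2)$ as currents. To see this, I will check that the intersection is well defined: $(\tilde p_1\circ\pi_X)^*\basecur{\phi_1}$ has wave front set conormal to the strata of $S_1$ pulled back, whereas $i_{2*}p_2^*N(\phi_2)$ is a product in the $X_1$-direction, so the two are transverse and the intersection is just a product current. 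The remaining term follows by symmetry.

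The main obstacle is the last piece: showing that $q_X$-descent of $\cone N(\phi_1)\boxtimes\cone N(\phi_2)$, restricted to the open set $\{\xi_1\neq0,\xi_2\neq0\}$, equals $F_*\Phi^*(N(\phi_1)\boxtimes N(\phi_2))$, and that neither side carries extra mass on $\mathcal M$. On the open set $\{\xi_1\ne0\ne\xi_2\}$ the map $F$ is a diffeomorphism onto $\P_X\setminus\mathcal M$. There $\Phi$ is the fiber bundle with fiber $\R_{>0}$ recording the ratio $|\xi_1|/|\xi_2|$ (after choosing a metric), so $\Phi^*$ of the product is precisely the quotient of the $(\R_{>0})^2$-cone by the diagonal $\R_{>0}$, with matching orientations by the quotient and fiber-product conventions. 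It remains to see that $F_*\Phi^*(\cdot)$ puts no mass on $\mathcal M$. Mass there would be a current of dimension $n-1$ supported on the boundary $\mathcal N$ pushed to $\mathcal M$, and the boundary contribution of $\Phi^*$ has the wrong dimension to produce one. The descended product cycle also puts no mass on $\mathcal M$, since $\cone N(\phi_1)\boxtimes\cone N(\phi_2)$ is the closure of its restriction to $\{\xi_1\ne0\ne\xi_2\}$. Both claims are dimension counts for subanalytic currents: a subanalytic $(n-1)$-current supported in a set of dimension $<n-1$ vanishes. For $\mathcal N_1\to\mathcal M_1$ the fibers of $F$ are $\P_{X_2}$-fibers of positive dimension, which kills the pushforward of any current confined to $\mathcal N$. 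Checking the orientation sign of this top term against the conventions in the Orientations subsection is the delicate point, and I would verify it on the model case $S_1=\{0\}\subset\R$, $S_2=\{0\}\subset\R$.
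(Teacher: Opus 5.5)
\textbf{Overall.} Your route is the paper's: the product formula $\CC(\phi_1\boxtimes\phi_2)=\CC(\phi_1)\boxtimes\CC(\phi_2)$, the expansion of each factor via the relation between characteristic and normal cycles into four pieces, and the matching of the three pieces off the zero section with the three terms of Alesker's formula. Your wave-front/local-product argument for the two mixed terms is the same computation the paper does with Cartesian squares. Two points in your treatment of the top term need repair.

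\textbf{No charge on $\mathcal M$.} The paper gets this by lifting to the pullback $\hat{T^*X}$ of $q_X$ along $F$ and using $j_*p^*=(\Phi\circ\hat\pi)^*$: the fibers of $\Phi\circ\hat\pi$ are the closures of the fibers of $p$. Your dimension count is an acceptable substitute, but you should give the actual reason. $\Phi$ restricts to a diffeomorphism of each $\mathcal N_i$ onto $\P_{X_1}\times\P_{X_2}$. Hence the support of $\Phi^*(N(\phi_1)\boxtimes N(\phi_2))$ meets $\mathcal N$ only in an $(n-2)$-dimensional set, so this subanalytic $(n-1)$-current does not charge $\mathcal N$, and $F_*$ of it does not charge $\mathcal M$. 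Your closing claim, that the positive-dimensional fibers of $F|_{\mathcal N_1}$ kill the pushforward of any current confined to $\mathcal N$, is false. A current carried by a local section of $\mathcal N_1\to\mathcal M_1$ pushes forward to a nonzero current. Drop that sentence.

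\textbf{Orientation.} Checking the single model case $S_1=S_2=\{0\}\subset\R$ only fixes the sign when $n_1=n_2=1$. It cannot exclude a discrepancy that depends on $n_1$, $n_2$ or the dimensions of the strata, and such signs do arise when permuting the $\R_{>0}$-fibers of $q_{X_1}$ and $q_{X_2}$ past the other factor. You need a general argument: on $\{\xi_1\neq0\neq\xi_2\}$, the current $q_X^*F_*\Phi^*(N(\phi_1)\boxtimes N(\phi_2))$ must coincide with $(q_{X_1}\times q_{X_2})^*(N(\phi_1)\boxtimes N(\phi_2))=\cone(N(\phi_1))\boxtimes\cone(N(\phi_2))$ with the orientations fixed by the quotient and fiber-product conventions. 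The paper avoids an explicit sign chase here. It exchanges $q_X$ and $F$ by base change along the Cartesian square formed by $\hat{T^*X}\to\hat{\P}_X$ over $q_X$, so the fiber-product orientation convention does that part of the bookkeeping. What remains is the identification $p=q_{X_1}\times q_{X_2}$ on the interior.
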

\begin{proof}
    Following \Cref{def:emCFgval}, denote~$(C_i,T_i) = (C_{\phi_i},N(\phi_i))$ for~$i=1,2$. We must prove:
    \begin{align}
        C_{\phi_1\boxtimes\phi_2} &= C_1 \boxtimes C_2,\label{eq:ext-pdt-base}\\[0.5em]
        \begin{split}
            N(\phi_1\boxtimes\phi_2)  &= F_*\Phi^*(T_1\boxtimes T_2) + (\tilde{p}_1\circ\pi_X)^*C_1 \cap i_{2*}p_2^*T_2 \\
            &\hspace{1.5cm} + i_{1*}p_1^*T_1 \cap (\tilde{p}_2\circ\pi_X)^*C_2. 
        \end{split}\label{eq:ext-pdt-normal}
    \end{align}
    
    The first equality \eqref{eq:ext-pdt-base} follows readily from the definition of~$C_{\phi_1\boxtimes\phi_2}$. For~\eqref{eq:ext-pdt-normal}, recall that characteristic cycles behave well with respect to exterior product~\cite[Thm.~4.5]{Fu94}, \cite[(9.4.1)]{KS90}:
    \begin{equation*}
        \CC(\phi_1\boxtimes\phi_2) = \CC(\phi_1)\boxtimes \CC(\phi_2).
    \end{equation*}
    By the formula relating characteristic and normal cycles \eqref{eq:char-to-normal-cycle}, and denoting again by~$s:T^*X \to T^*X$ the multiplication by~$-1$ in the fibers, we have:
    \begin{equation*}
        s_*\CC(\phi_1\boxtimes\phi_2) = C_1 \boxtimes C_2 + C_1\boxtimes \cone(T_2) + \cone(T_1) \boxtimes C_2 + \cone(T_1)\boxtimes\cone(T_2),
    \end{equation*}
    and the sum of the last three terms is~$\cone(N(\phi_1\boxtimes\phi_2))$. Thus~\eqref{eq:ext-pdt-normal} will follow from:
    \begin{align}
        \cone(T_1)\boxtimes\cone(T_2) &= \cone\big(F_*\Phi^*(T_1\boxtimes T_2)\big), \label{eq:normal-normal}\\
        C_1\boxtimes \cone(T_2) &= \cone\big((\tilde{p}_1\circ\pi_X)^*C_1 \cap i_{2*}p_2^*T_2\big),\label{eq:base-normal}\\
        \cone(T_1) \boxtimes C_2 &= \cone\big(i_{1*}p_1^*T_1 \cap (\tilde{p}_2\circ\pi_X)^*C_2\big).\label{eq:normal-base}
    \end{align}
    
    To prove~\eqref{eq:normal-normal}, denote by~$\hat{\pi}:\hat{T^*X} \to \hat{\P}_X$ the pullback of the fiber bundle~$q_X:T^*X\setminus \zero_X \to \P_X$ by the smooth map~$F:\hat{\P}_X\to \P_X$. More explicitly, this pullback bundle can be described as follows. The total space~$\hat{T^*X}$ is the manifold with boundary of dimension~$2\dim(X_1)+2\dim(X_2)$ such that:
    \begin{align*}
        \Int(\hat{T^*X}) &=\big\{(x_1,x_2,\xi_1,\xi_2)\in T^*X \colon \xi_1\ne 0 \mbox{ and } \xi_2\ne 0\big\}, \\
        \partial\hat{T^*X} &= \big(T^*X_1\setminus \zero_{X_1}\big)\times \P_{X_2} \ \cup \  \P_{X_1} \times \big(T^*X_2\setminus \zero_{X_2}\big),
    \end{align*}
    and the map~$\hat{\pi}:\hat{T^*X}\to\hat{\P}_X$ is given by:
    \begin{align*}
        \hat{\pi}(x_1,x_2,\xi_1,\xi_2) &= (x_1,x_2,[\xi_1:\xi_2],[\xi_1],[\xi_2])\mbox{ on } \Int(\hat{T^*X}),\\
        \hat{\pi}(x_1,x_2,\xi_1,[\eta_2]) &= (x_1,x_2,[\xi_1:0],[\xi_1],[\eta_2])\ \mbox{ on } (T^*X_1\setminus \zero_{X_1})\times \P_{X_2},\\
        \hat{\pi}(x_1, x_2, [\eta_1],\xi_2) &= (x_1,x_2,[0:\xi_2],[\eta_1],[\xi_2])\ \mbox{ on } \P_{X_1} \times (T^*X_2\setminus \zero_{X_2}).
    \end{align*}
    Moreover, the fiber bundle~$\hat{T^*X}$ is naturally equipped with a map~$\tilde{F}:\hat{T^*X}\to T^*X\setminus \zero_X$ given by:
    \begin{align*}
        \tilde{F}(x_1, x_2, \xi_1, \xi_2) &= (x_1, x_2, \xi_1, \xi_2)\mbox{ on }\Int(\hat{T^*X}),\\
        \tilde{F}(x_1, x_2, \xi_1, [\eta_2]) &= (x_1, x_2, \xi_1, 0)\ \mbox{ on } (T^*X_1\setminus \zero_{X_1})\times \P_{X_2},\\
        \tilde{F}(x_1, x_2, [\eta_1], \xi_2) &= (x_1, x_2, 0, \xi_2)\ \mbox{ on } \P_{X_1} \times (T^*X_2\setminus \zero_{X_2}).
    \end{align*}
    These maps and bundles assemble into the following commutative diagram whose bottom square is Cartesian:
    \begin{equation}\label{eq:diag-ext-product}
        \begin{tikzcd}
            &                                                                                                                               & \Int(\hat{T^*X}) \arrow[d, "\iota", hook] \arrow[ld, "j"', hook'] \arrow[rd, "j_X\circ\iota", hook] \arrow[lldd, "p"', bend right] &      \\
            & \hat{T^*X} \arrow[ld, "\Phi\circ \hat{\pi}"', two heads] \arrow[d, "\hat{\pi}", two heads] \arrow[r, "\tilde{F}"', two heads] & T^*X\setminus 0_X \arrow[d, "q_X", two heads] \arrow[r, "j_X"', hook]                                                              & T^*X \\
            \P_{X_1}\times \P_{X_2} & \hat{\P_X} \arrow[l, "\Phi", two heads] \arrow[r, "F"', two heads]                                                             & \P_X                                                                                                                              &     
        \end{tikzcd}
    \end{equation}
    where~$j$, $j_X$ and~$\iota$ denote canonical inclusions and~$p:=\Phi\circ\hat{\pi}\circ j$. 
    Note also that we have canonical identifications~$\Int(\hat{T^*X}) =(T^*X_1\setminus \zero_{X_1}) \times (T^*X_2\setminus \zero_{X_2})$,
    \begin{align*}
        p=q_{X_1}\times q_{X_2} &: (T^*X_1\setminus \zero_{X_1}) \times (T^*X_2\setminus \zero_{X_2})\to \P_{X_1}\times\P_{X_2},\\
        j_X\circ\iota = j_{X_1}\times j_{X_2} &: (T^*X_1\setminus \zero_{X_1}) \times (T^*X_2\setminus \zero_{X_2})\to T^*X,
    \end{align*}
    which entail:
    \begin{equation*}
        \cone(T_1)\boxtimes\cone(T_2) = j_{X*}\iota_*p^*(T_1\boxtimes T_2) = j_{X*}\tilde{F}_* j_* p^*(T_1\boxtimes T_2).
    \end{equation*}
    
    Moreover, one has the equality of maps~$j_*p^* = (\Phi\circ\hat{\pi})^*$ from the space of currents on~$\P_{X_1}\times \P_{X_2}$ to the space of currents on~$\hat{T^*X}$. Indeed, for any compactly supported smooth differential form~$\omega$ on~$\hat{T^*X}$ we have that~$(\Phi\circ\hat{\pi})_*\omega = p_*j^*\omega$. This last equality follows readily from the fact that the fibers of~$\Phi\circ\hat{\pi}$ are the closure of the fibers of~$p$ in~$\hat{T^*X}$ so that the integration along the fibers of~$\Phi\circ\hat{\pi}$ of the smooth form~$\omega$ and the integration along the fibers of~$p$ of the restriction~$j^*\omega$ of~$\omega$ to the interior of~$\hat{T^*X}$ coincide. 

    Since the bottom square of~\eqref{eq:diag-ext-product} is Cartesian, one has that
    \begin{equation*}
        q_X^* F_*\Phi^*(T_1\boxtimes T_2) = \tilde{F}_*(\Phi\circ\hat{\pi})^*(T_1\boxtimes T_2),
    \end{equation*} 
    so that we have proven:
    \begin{align*}
        \cone(T_1)\boxtimes\cone(T_2) 
        &= j_{X*}\tilde{F}_* (\Phi\circ\hat{\pi})^*(T_1\boxtimes T_2)\\
        &=j_{X*}q_X^* F_*\Phi^*(T_1\boxtimes T_2) \\
        &=\cone(F_*\Phi^*(T_1\boxtimes T_2)).
    \end{align*}
    
    To prove~\eqref{eq:base-normal} (and~\eqref{eq:normal-base} by symmetry), consider the commutative diagram:
    \begin{equation*}
        \begin{tikzcd}
            & T^*X_1 \times T^*X_2                               & X_1 \times T^*X_2 \arrow[r, "p''_2", two heads] \arrow[l, "i''_2"', hook']                                                                                                               & T^*X_2                                                                                    \\
            & T^*X_1 \times (T^*X_2\setminus 0_{X_2}) \arrow[ld, "p_1\circ \pi_X\circ q_X"', two heads]\arrow[d, "q_X", two heads] \arrow[u, "j_X"', hook] & X_1 \times (T^*X_2\setminus 0_{X_2}) \arrow[d, "\id_{X_1}\times q_{X_2}", two heads] \arrow[r, "p'_2", two heads] \arrow[l, "i'_2"', hook'] \arrow[u, "\id_{X_1}\times j_{X_2}"', hook] & T^*X_2\setminus 0_{X_2} \arrow[d, "q_{X_2}", two heads] \arrow[u, "j_{X_2}"', hook] \\
        X_1 & \P_X \arrow[l, "p_1\circ \pi_X", two heads]                                                 & X_1\times \P_{X_2} \arrow[l, "i_2", hook'] \arrow[r, "p_2"', two heads]                                                                                                         & \P_{X_2}                                                                                 
        \end{tikzcd}
    \end{equation*}
    where~$j_{X_2}$, $i'_2$ and $i''_2$ are the canonical injections, and~$p_2'$, $p''_2$ and~$q_{X_2}$ the canonical submersions. The bottom left and the top right commutative squares of the above diagram are Cartesian squares. Therefore, we have:
    \begin{equation*}
        j_{X*}q_X^*i_{2*}p_2^*T_2 = j_{X*}i'_{2*}(\id_{X_1}\times q_{X_2})^*p_2^*T_2
        =i''_{2*}(p_2'')^*\cone(T_2).
    \end{equation*}
    Moreover, it follows from the definitions of the operations that
    \begin{equation*}
        i''_{2*}(p_2'')^*\cone(T_2) = \intcur{\zero_{X_1}}\boxtimes \cone(T_2),
    \end{equation*}
    and, denoting $\tilde{\pi}_{X_1} : T^*X_1 \to X_1$ the canonical submersion, we have:
    \begin{equation*}
        j_{X*}q_X^*(\tilde{p}_1\circ\pi_X)^*C_1 = \left(\tilde{\pi}_{X_1}\right)^*\!C_1\boxtimes \intcur{T^*X_2}.
    \end{equation*}
    Therefore, we have:
    \begin{align*}
        j_{X*}q_X^*\big((\tilde{p}_1\circ\pi_X)^*C_1 \cap i_{2*}p_2^*T_2\big) &= j_{X*}q_X^*\big((\tilde{p}_1\circ\pi_X)^*\!C_1 \big)\ \cap\  j_{X*}q_X^*\big(i_{2*}p_2^*T_2\big) \\
        &= \left(\tilde{\pi}_{X_1}\right)^*\!C_1\boxtimes \intcur{T^*X_2} \ \cap\  \intcur{\zero_{X_1}}\boxtimes \cone(T_2)\\
        &= C_1 \boxtimes \cone(T_2).
    \end{align*}
\end{proof}

\section{Pullback and product}\label{sec:pullback}
In this section, we recall the definition of the pullback operation on generalized valuations as defined in~\cite{alesker_intgeo} and prove that it restricts on constructible functions to the usual pullback given by precomposition under mild transversality assumptions. As a corollary, we obtain a similar result for the product of constructible functions.

\subsection{Definition}
We follow closely the exposition and the notations of~\cite{alesker_intgeo}.  Let~$X$ and~$Y$ be two oriented real analytic manifolds, let~$f:X\to Y$ be a real analytic map and let~$\zeta\in\gval[Y]$ be represented by the pair of currents~$(C,T)$. 

\subsubsection*{Submersions.} In \cite{alesker_intgeo}, the pullback of generalized valuations by submersions is defined as the adjoint operation to the pushforward of compactly supported smooth valuations. The explicit expression of the pullback as operations on the pair of currents representing the generalized valuation is proven in~\cite[Prop.~3.3.3]{alesker_intgeo}. To be more consistent with the definition of the other operations on generalized valuations, we will take this explicit expression as a definition of the pullback. Consider then the diagram of canonical maps:
\begin{equation}\label{eq:bardf-diag}
	\P_X \overset{\bar{df}}{\longleftarrow} X \times_Y \P_Y \overset{\bar{\tau}}{\longrightarrow} \P_Y.
\end{equation}
\begin{defi}[{\cite[Prop.~3.3.3]{alesker_intgeo}}]
    Suppose that~$f$ is a submersion. The \emph{pullback} of~$\zeta$ by~$f$ is the generalized valuation~$f^*\zeta\in\gval[X]$ represented by the pair of currents:
    \begin{equation*}
        (C',T') = \big(f^*C,\bar{df}_*\bar{\tau}^*T\big).
    \end{equation*}
\end{defi}

\subsubsection*{Immersions.} The pullback of generalized valuations by immersions is defined under the following transversality assumptions:
\begin{defi}[\textnormal{\cite[Def.~3.5.2]{alesker_intgeo}}]\label{def:transverse-gval}
    Denote~$\WF(\zeta) = (\Lambda, \Gamma)$.
    \begin{enumerate}
        \item A closed embedding~$f:X\to Y$ is \emph{transverse} to~$\zeta$ if:
        \begin{align*}
            \Lambda \cap T^*_X Y &= \emptyset,\\
            \Gamma \cap T^*_{X\times_Y \P_Y}\P_Y &= \emptyset, \\
            \Gamma \cap T^*_{\P_+(T^*_X Y)}\P_Y &= \emptyset.
        \end{align*}
        \item An immersion $f:X\to Y$ is \emph{transverse} to~$\zeta$ if for every~$x\in X$, there exists an open neighborhood~$U$ of~$x$ in~$X$ and an open neighborhood~$V$ of~$f(x)$ in~$Y$ such that~$f|_{U}:U\to V$ is a closed embedding which is transverse to~$\zeta|_{V}\in\gval[V]$. 
    \end{enumerate}
\end{defi}
Consider the oriented blowup~$\tilde{X\times_Y \P_Y}$ of~$X\times_Y \P_Y$ along the submanifold~$\P_+(T^*_XY)$. The total space of this blowup is a manifold with boundary of dimension~$\dim(X)+\dim(Y)-1$ and such that:
    \begin{equation}\label{eq:oriented-blowup-sphere}
        \begin{split}
            \Int\left(\tilde{X\times_Y \P_Y}\right) &= \Big\{(x, [\xi],[\xi'])\in (X\times_Y \P_Y)\times_X \P_X \colon\\
            &\hspace{4cm} df^*\xi \ne 0 \  ; \  [df^*\xi] = [\xi']\Big\}, \\
            \partial\left(\tilde{X\times_Y \P_Y}\right) &= \P_+\left(T^*_XY\right) \times_X \P_X.
        \end{split}
    \end{equation}
We denote by~$\alpha\colon \widetilde{X \times_Y \P_Y} \to X \times_Y \P_Y \hookrightarrow \P_Y$ the composition of the blowup map and of the canonical inclusion and by~$\beta \colon \widetilde{X \times_Y \P_Y} \to \P_X$ the map induced by $\overline{df}$. These maps yield a diagram:
\begin{equation} \label{eq_maps_alpha_beta}
    \P_X \overset{\beta}{\longleftarrow} \widetilde{X \times_Y \P_Y} \overset{\alpha}{\longrightarrow} \P_Y.
\end{equation}
\begin{defi}[{\cite[Claim~3.5.4]{alesker_intgeo}}]\label{def:pullback-gval-immersion}
    Suppose that~$f$ is an immersion which is transverse to~$\zeta$. The \emph{pullback} of~$\zeta$ by~$f$ is the generalized valuation~$f^*\zeta\in\gval[X]$ represented by the pair of currents:
    \begin{equation*}
        (C',T') = (f^*C,\beta_*\alpha^*T).
    \end{equation*}
\end{defi}

\subsection{Restriction to constructible functions}
In the case of a generalized valuation associated to a constructible function, the currents~$(C,T)$ are both subanalytic. In this case, the operations in~\Cref{def:pullback-gval-immersion} are thus well-defined under the assumptions that~$f$ and~$\alpha$ are transverse to the strata of a Whitney stratification defining~$\basecur{\psi}$ and~$N(\psi)$ respectively, and it is not necessary to require that the conditions of~\Cref{def:transverse-gval} are met. Moreover, in view of the construction of normal and characteristic cycles (\Cref{sec:characteristic-cycles}), it is clear that these transversality assumptions are satisfied when~$f$ is transverse to~$\psi$ in the sense of~\Cref{def:transverse-map-CF}.

\begin{thm}[see {\Cref{thm:pullback-CFgval}}]
    Let~$f:X\to Y$ be a real analytic map between real analytic manifolds and let~$\psi\in\CF[][Y]$. If any one of the following two conditions is satisfied:
    \begin{enumerate}
        \item\label{itm:thm-pullback-submersion}~$f$ is a submersion, or
        \item\label{itm:thm-pullback-immersion}~$f$ is an immersion which is transverse to~$\psi$,
    \end{enumerate}
    then the pullback~$f^*\!\emCFgval[\psi]\in\gval[X]$ is well-defined and~$f^*\!\emCFgval[\psi] = \emCFgval[f^*\psi]$.
\end{thm}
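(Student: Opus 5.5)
We need to show that $(f^*\basecur{\psi}, T') = (\basecur{f^*\psi}, N(f^*\psi))$, where $T'$ is $\bar{df}_*\bar\tau^*N(\psi)$ in the submersion case and $\beta_*\alpha^*N(\psi)$ in the immersion case. The base-current equality is elementary in both cases. For a submersion, $f^*\basecur{\psi}$ integrates over the preimages of the open strata on which $\psi$ is constant, and $f^{-1}$ of a top-dimensional stratum is open; lower-dimensional strata pull back to nowhere-dense sets. For an immersion transverse to the strata, the pullback of the current $\intcur{S}$ by Lemma~\ref{lem:cur-BM-pullback} is $\intcur{f^{-1}(S)}$ with the preimage orientation. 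Only the top-dimensional preimages contribute, and these are exactly the $n$-dimensional level sets of $\psi\circ f$. The real work is the normal cycle. Via~\eqref{eq:char-to-normal-cycle} and injectivity of conification on conical currents, it suffices to prove the equality of characteristic cycles after conification:
\begin{equation*}
 s_*\CC(f^*\psi) - \basecur{f^*\psi} = \cone(T').
\end{equation*}

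For the left side I will use Lemma~\ref{lem:pullback-CC}, $\CC(f^*\psi) = df_*\tau^*\CC(\psi)$. This applies in both cases, since a submersion is transverse to every stratum. The strategy is to show that the Borel--Moore operations $df_*\tau^*$ agree with the current operations on the subanalytic current $\CC(\psi)$, and then to track the decomposition $\CC(\psi)=\basecur{\psi}+s_*\cone(N(\psi))$ term by term.
\begin{itemize}
\item Since $f$ is transverse to the strata of a Whitney stratification defining $\CC(\psi)$ (the conormals to the strata for $\psi$), the map $\tau$ is transverse to the induced stratification of $T^*Y$. Writing $\tau^*$ as the graph embedding composed with a projection, Lemma~\ref{lem:cur-BM-pullback} identifies the current pullback with the Borel--Moore pullback.
\item The map $df$ is proper on $\tau^{-1}(\supp \CC(\psi))$ precisely by the transversality (in the submersion case it is even injective on fibres). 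Lemma~\ref{lem:cur-BM-pushforward} then identifies the pushforwards.
\end{itemize}
Because the Hardt isomorphism between subanalytic cycles and Borel--Moore classes is injective, this yields an equality of currents $\CC(f^*\psi)=df_*\tau^*\CC(\psi)$ in $\scur{n}[T^*X]$.

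Next I split $df_*\tau^*$ over the two summands of $\CC(\psi)$. Both commute with $s$, since they are fibrewise linear.
\begin{itemize}
\item \textbf{Zero-section summand.} The term $df_*\tau^*\basecur{\psi}$ equals the pullback of $\basecur{\psi}$ placed on the zero section of $X$, that is, $\basecur{f^*\psi}$ by the first paragraph.
\item \textbf{Cone summand.} For the term $df_*\tau^*\cone(N(\psi))$, I will establish commutative diagrams in which conification intertwines the conic operations with their projectivized versions. Submersion case: $\tau^*\circ\cone=\cone\circ\bar\tau^*$ by the Cartesian square $X\times_Y(T^*Y\setminus\zero)\to T^*Y\setminus\zero$ over $\bar\tau$ together with the base change lemma, and $df_*\cone=\cone\,\bar{df}_*$ because $df$ is injective and nonvanishing away from zero. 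Immersion case: covectors in $T^*_XY$ are sent to zero by $df$. Here I lift to the cone over the oriented blowup $\widetilde{X\times_Y\P_Y}$, exactly as in the proof of Proposition~\ref{prop:exterior-product-CFgval}. The pullback of $q_X$ along $\beta$ is a manifold with boundary mapping onto $X\times_Y(T^*Y\setminus\zero)$ away from $T^*_XY$. Cartesian squares and base change for currents give $\cone(\beta_*\alpha^*T)=j_{X*}(\text{restriction of } df_*\tau^*\cone(T) \text{ off the zero section})$.
\end{itemize}
What remains is to check that $df_*\tau^*\cone(N(\psi))$ has no component supported in the zero section of $T^*X$. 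This holds because that part comes from $\tau^{-1}(T^*_XY)$, which has dimension too small by transversality.

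The main obstacle is the immersion case of this cone comparison: justifying the base change across the blowup boundary, where fibres of $\hat{\pi}$ are closures of fibres of the interior map, and controlling the contribution of $\tau^*\cone(N(\psi))$ over $T^*_XY$. I would first handle these by the same fibre-closure argument used for $j_*p^*=(\Phi\circ\hat\pi)^*$ above, and treat orientations by the fibre-product conventions.
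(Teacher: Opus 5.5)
Your proof of case (ii) follows the paper. Case (i) is also correct but takes a genuinely different route, and one sentence in your argument needs rewording.

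\textbf{Case (ii).} The ingredients are the same as the paper's:
\begin{itemize}
\item Schmid--Vilonen's formula $\CC(f^*\psi)=df_*\tau^*\CC(\psi)$;
\item \Cref{lem:cur-BM-pullback,lem:cur-BM-pushforward} to pass from Borel--Moore classes to currents;
\item splitting off the zero-section summand;
\item comparing $df_*\tau^*\cone(N(\psi))$ with $\cone(\beta_*\alpha^*N(\psi))$ via the pullback of $q_X$ along $\beta$ and Cartesian squares.
\end{itemize}
The boundary obstacle you anticipate does not actually arise, and no fibre-closure argument is needed. The paper uses transversality to see that $\supp(\tau'^*q_Y^*N(\psi))$ misses $T^*_XY$. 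Likewise, $\supp(\pi'^*\alpha^*N(\psi))$ lies in the interior of the pullback of $q_X$ along $\beta$. So every current involved lives on $(X\times_Y T^*Y)\setminus T^*_XY$, and this also removes any contribution over the zero section. Your dimension count for the zero section works too, once you note that the relevant set is the zero section over $f^{-1}(\pi_Y(\supp N(\psi)))$. That set has dimension at most $n-1$, so an $n$-dimensional subanalytic current puts no mass on it.

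\textbf{Case (i).} Here you differ from the paper, which never uses Schmid--Vilonen for submersions. The paper instead:
\begin{itemize}
\item passes to the normal form $\R^{m+n}\to\R^n$ and writes $f^*\psi=\1_{\R^m}\boxtimes\psi$;
\item applies \Cref{prop:exterior-product-CFgval} with $N(\1_{\R^m})=0$ to get $N(f^*\psi)=i_{2*}p_2^*N(\psi)$;
\item identifies $\P_X\leftarrow X\times_Y\P_Y\to\P_Y$ with $\P_{\R^{m+n}}\leftarrow\R^m\times\P_{\R^n}\to\P_{\R^n}$.
\end{itemize}
This recycles the exterior-product computation, orientations included. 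It never needs a current/Borel--Moore comparison for a map that is not an embedding.

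Your route treats both cases uniformly through one formula and avoids the normal form. The price is an ingredient the paper does not state. \Cref{lem:cur-BM-pullback} covers only closed embeddings, so for the submersion $\tau$ you also need that current pullback by the projection $X\times T^*Y\to T^*Y$ agrees with the Poincar\'e-duality pullback, i.e.\ cross product with the fundamental class. This is standard, and with it your graph-embedding factorization goes through. Your two conification identities are then straightforward, because in this case $df$ is a fibrewise injective closed embedding and both squares you use are Cartesian.

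\textbf{Wording fix.} The passage from Borel--Moore classes to currents is not ``injectivity of the Hardt isomorphism on cycles''; that fails in general. It works here because both sides are $n$-cycles supported on an $n$-dimensional subanalytic set. The top Borel--Moore homology of such a set has no boundaries to quotient by, so equal classes are equal cycles. This is also how the support-indexed statements of \Cref{lem:cur-BM-pullback,lem:cur-BM-pushforward} should be read.
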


\begin{proof}
    (i) Suppose first that~$f$ is a submersion. In that case, the pullback is always well-defined. By \cite[Prop.~3.3.3]{alesker_intgeo}, we must prove:
    \begin{align}
        C_{f^*\psi} &= f^*C_\psi, \label{eq:pullback-base}\\
        N(f^*\psi) &= \bar{df}_*\bar{\tau}^*N(\psi)\label{eq:pullback-submersion-normal}.
    \end{align}
    One can reduce to local computations on an open neighborhood~$U$ of a point~$x\in X$. Using the (real analytic) normal form of submersions, one can reduce to proving the result for~$X = \R^{m+n}$, ~$Y=\R^n$ and~$f:\R^{m+n} \to \R^n$ the projection on the last coordinates. In that case, the pullback has the simple form~$f^*\psi = \1_{\R^m}\boxtimes \psi$. 
    
    In these local coordinates, equality~\eqref{eq:pullback-base} becomes obvious.
    For~\eqref{eq:pullback-submersion-normal}, note that~$N(\1_{\R^m}) = 0$, so that the result on the exterior product (\Cref{prop:exterior-product-CFgval}) ensures that:
    \begin{equation*}
        N(f^*\psi) = (\tilde{p}_1\circ\pi_X)^*\intcur{\R^m} \cap i_{2*}p_2^*N(\psi) = \intcur{\P_{\R^{m+n}}} \cap i_{2*}p_2^*N(\psi) = i_{2*}p_2^*N(\psi).
    \end{equation*}
    The fiber product~$X \times_Y \P_Y$ is isomorphic to~$\R^m \times \P_{\R^n}$ and under this identification the diagram
    \begin{equation*}
        \P_X \overset{\bar{df}}{\longleftarrow} X \times_Y \P_Y \overset{\bar{\tau}}{\longrightarrow} \P_Y
    \end{equation*}
    involved in the definition of the pullback coincides with the diagram
    \begin{equation*}
        \P_{\R^{m+n}} \overset{i_2}{\longleftarrow} \R^{m} \times \P_{\R^n} \overset{p_2}{\longrightarrow} \P_{\R^n}
    \end{equation*}
    involved in the formula for the exterior product, hence the result.

    (ii) Suppose now that~$f$ is an immersion which is transverse to~$\psi$. We must prove that~$f^*C_\psi = C_{f^*\psi}$ and~$N(f^*\psi) = \beta_*\alpha^*N(\psi)$. Since everything is local, we can assume without loss of generality that~$f$ is a closed embedding which is transverse to~$\psi$.
    
    To do so, we use Schmid and Vilonen's description of the pullback operation on characteristic cycles which ensures that~$\CC(f^*\psi) = df_*\tau^*\CC(\psi)$ as Borel-Moore homology classes; see \Cref{lem:pullback-CC}. Again, the construction of characteristic cycles and our assumption on the transversality of~$f$ to~$\psi$ imply that~$\tau$ is transverse to the strata of a subanalytic Whitney stratifcation of~$T^*Y$ defining~$\CC(\psi)$.
    Therefore, \Cref{lem:cur-BM-pullback,lem:cur-BM-pushforward} ensure that~$\CC(f^*\psi) = df_*\tau^*\CC(\psi)$ as subanalytic currents. From this last equality and the relationship between characteristic and normal cycles we deduce that~$\basecur{f^*\psi} = df_*\tau^*\basecur{\psi}$ and~$\cone(N(f^*\psi)) =  df_*\tau^*\cone(N(\psi))$. The result will then follow from:
    \begin{align*}
        df_*\tau^*\basecur{\psi} &= f^*\basecur{\psi}, \\
        df_*\tau^*\cone(N(\psi)) &=  \cone\left(\beta_*\alpha^*N(\psi)\right).
    \end{align*}

    The first equality is obvious. 
    We will prove the second in two steps. For that, let us denote by~$\tilde{q}\colon (X\times_Y T^*Y)\setminus T^*_XY \to \tilde{X\times_Y \P_Y}$ the map~$(x,\xi) \mapsto (x,[\xi],[df(\xi)])$ and by~$\tilde{j}\colon (X\times_Y T^*Y)\setminus T^*_XY\hookrightarrow X\times_Y T^*Y$ the canonical inclusion. Denote also the canonical maps~$q_X:T^*X\setminus \zero \rightarrowdbl \P_X$ and~$j_X \colon T^*X\setminus\zero \hookrightarrow T^*X$, and similarly for~$q_Y$ and~$j_Y$. These maps fit into the following commutative diagram:
    \begin{equation}\label{eq:alpha-diag-pullback}
        \begin{tikzcd}
            \P_Y                                                                 &                                                                                                       & \tilde{X \times_Y \P_Y} \arrow[ll, "\alpha"']                                                                                                                          \\
            T^*Y\setminus \zero \arrow[d, "j_Y"', hook'] \arrow[u, "q_Y", two heads] & \left(X \times_Y T^*Y\right)\setminus \zero \arrow[d, "\tilde{j}'", hook'] \arrow[l, "\tau'", hook'] & \left(X \times_Y T^*Y\right)\setminus T^*_X Y \arrow[ld, "\tilde{j}", hook'] \arrow[u, "\tilde{q}"]  \arrow[l, "\tau''", hook'] \\
            T^*Y                                                                 & X \times_Y T^*Y \arrow[l, "\tau", hook']                                                              &                                                                                                                                                                       
        \end{tikzcd}
    \end{equation}
    where $\tau',\tau'',\tilde{j}'$ are the canonical inclusions. The first step will be to prove that, denoting~$T = N(\psi)$, we have~$\tau^*\cone(T) = \tilde{j}_*\tilde{q}^*\alpha^*T$.
    The commutativity of~\eqref{eq:alpha-diag-pullback} implies that:
    \begin{equation*}
        \tilde{j}_*\tilde{q}^*\alpha^*T = \tilde{j}'_*\tau''_*(\tau'\circ\tau'')^*q_Y^*T.
    \end{equation*}
    By our transversality assumption, we have~$\supp(\tau'^*q_Y^*T)\subset (X\times_Y T^*Y)\setminus T^*_XY$, so that:
    \begin{equation*}
        \tau''_*(\tau'\circ\tau'')^*q_Y^*T = \tau'^*q_Y^*T.
    \end{equation*}
    Therefore, using that the bottom left square of~\eqref{eq:alpha-diag-pullback} is Cartesian, we get:
    \begin{equation*}
        \tilde{j}_*\tilde{q}^*\alpha^*T = \tilde{j}'_*\tau'^*q_Y^*T = \tau^*j_{Y*}q_Y^*T=\tau^* \cone(T).
    \end{equation*}

    The second step will to prove that~$df_*\tilde{j}_*\tilde{q}^*\alpha^*T = \cone(\beta_*\alpha^*T)$. To do this, consider the Cartesian square:
    \begin{equation*}
    \begin{tikzcd}
        \doublewidetilde{X \times_Y \P_Y} \arrow[r, "\pi'"] \arrow[d, "\pi''"] & \tilde{X \times_Y \P_Y} \arrow[d, "\beta"] \\
        T^*X\setminus\zero \arrow[r, "q_X"]                                & \P_X                                      
    \end{tikzcd}
    \end{equation*}
    where the total space~$\doublewidetilde{X \times_Y \P_Y}$ is a smooth manifold with boundary of dimension~$\dim(X) + \dim(Y)$ given by:
    \begin{align*}
            \Int\left(\doublewidetilde{X\times_Y \P_Y}\right) &= \Big\{(x, [\xi],\xi')\in (X\times_Y \P_Y)\times \left(T^*X\setminus\zero\right) \colon \\
            &\hspace{3.5cm} df(\xi)\ne 0 \  ; \  [df(\xi)] = [\xi']\Big\}, \\
            \partial\left(\doublewidetilde{X\times_Y \P_Y}\right) &= \P_+\left(T^*_XY\right) \times_X \left(T^*X\setminus\zero\right),
    \end{align*}
    the map~$\pi''$ is the canonical projection and the map~$\pi'$ is~$(x, [\xi],\xi')\mapsto (x, [\xi], [\xi'])$. Note that the map~$\iota:(X\times_Y T^*Y)\setminus T^*_XY \hookrightarrow \doublewidetilde{X \times_Y\P_Y}$ given by~$(x,\xi)\mapsto (x,[\xi],df(\xi))$ is an open embedding onto the interior of~$\doublewidetilde{X\times_Y\P_Y}$. Therefore, we have a commutative diagram:
    \begin{equation}\label{eq:beta-diag-pullback}
        \begin{tikzcd}
                                                                                                            & \tilde{X\times \P_Y} \arrow[r, "\beta"]                  & \P_X                                                         \\
        (X\times_Y T^*Y)\setminus T^*_XY \arrow[ru, "\tilde{q}"] \arrow[d, "\tilde{j}"', hook] \arrow[r, "\iota", hook] & \doublewidetilde{X \times_Y\P_Y} \arrow[r, "\pi''"] \arrow[u, "\pi'"] & T^*X \setminus\zero \arrow[d, "j_X", hook] \arrow[u, "q_X"'] \\
        X\times_Y T^*Y \arrow[rr, "df"]                                                                       &                                                          & T^*X                                                        
        \end{tikzcd}
    \end{equation}

    Denoting~$T' = \alpha^* T$, we must prove that~$df_*\tilde{j}_*\tilde{q}^*T' = j_{X*}q_X^*\beta_*T'$. Using the commutativity of~\eqref{eq:beta-diag-pullback}, we have:
    \begin{equation*}
        df_*\tilde{j}_*\tilde{q}^*T' = j_{X*}\pi''_*\iota_*\iota^*\pi'^*T'.
    \end{equation*}
    Again, our transversality assumptions entails~$\supp(\pi'^*T') \subset \Int(\doublewidetilde{X \times_Y \P_Y})$, so that~$\iota_*\iota^*\pi'^*T' = \pi'^*T'$. Using that the top right square of~\eqref{eq:beta-diag-pullback} is Cartesian, we finally get:
    \begin{equation*}
        df_*\tilde{j}_*\tilde{q}^*T' = j_{X*}\pi''_*\pi'^*T' = j_{X*}q_X^*\beta_*T',
    \end{equation*}
    which concludes the proof.
\end{proof}

\subsection{Product}\label{sec:product} 
In this section, we consider the product of generalized valuations defined in~\cite{AB09}.
\begin{defi}\label{def:product-gval}
    Let~$\zeta_1, \zeta_2\in\gval[X]$ be two generalized valuations such that the diagonal embedding~$\delta:X\hookrightarrow X\times X$ is transverse to $\zeta_1\boxtimes\zeta_2\in\gval[X\times X]$ in the sense of~\Cref{def:transverse-gval}. Then, the \emph{product} of $\zeta_1$ and $\zeta_2$ is the generalized valuation~$\zeta_1\cdot\zeta_2\in\gval[X]$ defined by:
    \begin{equation*}
        \zeta_1\cdot\zeta_2 = \delta^*(\zeta_1\boxtimes\zeta_2).
    \end{equation*}
\end{defi}
The definition of the product of generalized valuations is defined in~\cite{AB09} in terms of operations on the currents~$(C,T)$. That the two definitions coincide follows from the facts that (i) the transversality assumption of \Cref{def:transverse-gval} in the case of the diagonal embedding coincides with the transversality assumption for the product in~\cite[Thm.~8.3]{AB09}, (ii) the product of generalized valuations defined in~\cite[Thm.~8.3]{AB09} restricts to the product of smooth valuations, (iii) the product of smooth valuations is defined as a pullback of the exterior product by the diagonal embedding, (iv) smooth valuations are dense in the space of generalized valuations and (v) operations on generalized valuations are sequentially continuous~(\cite[Sec.~2]{alesker_intgeo},\cite[Prop.~3.5.5]{alesker_intgeo}).

\begin{proof}[Proof of~\Cref{cor:product-CFgval}.]
    Alesker's result on the restriction of the exterior product of generalized valuations to constructible functions (\Cref{prop:exterior-product-CFgval}) and our result on the restriction of the pullback (\Cref{thm:pullback-CFgval}) imply the result for the restriction of the product to constructible functions.
\end{proof}

\section{Pushforward and convolution}\label{sec:pushforward}
In this section, we recall the definition of the pushforward of generalized valuations introduced in~\cite{alesker_intgeo} with a correction to the formula for immersions. Then, we prove that the pushforward of generalized valuations restricts on constructible functions to the pushforward coming from sheaf theory under mild transversality assumptions.

\subsection{Definition}
We follow closely the exposition and the notations of~\cite{alesker_intgeo}. The pushforward of generalized valuations is defined as the adjoint of the pullback of smooth valuations using Alesker-Poincaré duality. More precisely, consider a real analytic map~$f:X^n\to Y^m$ between oriented real analytic manifolds and~$\zeta\in\gval[X]$ represented by the pair of currents~$(C,T)$. The pushforward~$f_*\zeta\in\gval[Y]$ is defined via the formula:
\begin{equation*}
    \int_Y f_*\zeta \cdot \psi = \int_X \zeta\cdot f^*\psi, \quad  \mbox{ for all }  \psi\in\val[\cpt][Y],
\end{equation*}
whenever it make sense.

\subsubsection*{Immersions} Suppose that~$f$ is an immersion and that~$n<m$, for otherwise the description is trivial. In that case, the pullback of a smooth valuation~$\psi\in\val[][Y]$ by~$f$ is again smooth, and it given for any~$P\in\submfd[X]$ by~$f_*\psi(P) = \psi(f(P))$. 

Moreover, recall the maps:
\begin{equation*}
    \P_X \overset{\beta}{\longleftarrow} \widetilde{X \times_Y \P_Y} \overset{\alpha}{\longrightarrow} \P_Y
\end{equation*}
from~\eqref{eq_maps_alpha_beta} and consider the natural maps:
\begin{displaymath}
    X \stackrel{\beta'}{\longleftarrow}  \P_+(T_X^*Y) \stackrel{\alpha'}{\longrightarrow} \P_Y.
\end{displaymath}
If~$\psi\in\val[c][Y]$ is given by the pair of differential forms~$(\oldphi,\omega)\in\forms[Y]{n} \oplus \forms[\P_Y]{n-1}$ as in~\eqref{eq:def-val}, then \cite[Prop.~3.1.2]{alesker_intgeo} ensures that the pullback~$f^*\psi$ is described by the pair~$(\beta'_*\alpha'^*\omega,\beta_*\alpha^*\omega)$. It follows then from the definition of the pushforward that:
\begin{equation*}
    \dualdot{f_*\zeta,\psi} = \dualdot{\zeta,f^*\psi} 
    = \dualdot{C,\beta'_*\alpha'^*\omega} + \dualdot{T,\beta_*\alpha^*\omega} \\
    = \dualdot{\alpha'_*\beta'^*C+\alpha_*\beta^*T,\omega}.
\end{equation*}

Therefore, we can correct a formula from \cite[Prop.~3.4.2]{alesker_intgeo}:
\begin{prop}\label{def:pushforward-immersions}
    If~$f : X^n \to Y^m$ is an immersion with $n<m$, then the generalized valuation~$f_*\zeta\in\gval[Y]$ is represented by the pair of currents: 
    \begin{equation}\label{eq:def-pushforward-immersion}
        (C',T') = \big(0,\alpha_*\beta^*T + \alpha'_* \beta'^*C\big).
    \end{equation}
\end{prop}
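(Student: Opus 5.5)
The plan is to identify $f_*\zeta$ through the duality pairing and then read off the two currents from \Cref{prop:gval-as-cur}. By definition, $f_*\zeta$ is the generalized valuation satisfying $\dualdot{f_*\zeta,\psi}=\dualdot{\zeta,f^*\psi}$ for all $\psi\in\val[\cpt][Y]$. Fix such a $\psi$ and represent it by compactly supported forms $(\oldphi,\omega)$. By \cite[Prop.~3.1.2]{alesker_intgeo}, $f^*\psi$ is represented by $(\beta'_*\alpha'^*\omega,\beta_*\alpha^*\omega)$. Note that the $n$-form component does not involve $\oldphi$ at all: $f(X)$ has measure zero in $Y$ since $n<m$, so $\oldphi$ contributes nothing to $\psi(f(P))$. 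Since $f$ is only an immersion, I would first localise with a partition of unity on $X$, so that $f$ restricted to each piece is a closed embedding. On such a piece, $\alpha,\alpha'$ are proper and $\beta,\beta'$ are fibre bundles, which makes the pushforwards and pullbacks of currents well defined. Both sides of the identity are linear in $\zeta$, so the pieces can be patched together afterwards.

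Using the representation of $f^*\psi$, I would compute
\[
\dualdot{\zeta,f^*\psi}=\dualdot{C,\beta'_*\alpha'^*\omega}+\dualdot{T,\beta_*\alpha^*\omega}.
\]
The operations $\beta'_*$ and $\beta_*$ are fibre integrations, which are dual to pullbacks of currents. The operations $\alpha'^*$ and $\alpha^*$ are pullbacks of forms, which are dual to pushforwards of currents. Transposing therefore gives
\[
\dualdot{\alpha'_*\beta'^*C+\alpha_*\beta^*T,\,\omega}.
\]
This pairing is independent of $\oldphi$. Hence $f_*\zeta$ is given by $\psi\mapsto 0\cdot\oldphi+T'(\omega)$, with $T'=\alpha_*\beta^*T+\alpha'_*\beta'^*C$.

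The remaining step is to check that $(0,T')$ really lies in the image described in \Cref{prop:gval-as-cur}. Concretely, $T'$ must be a Legendrian cycle and $\pi_{Y*}T'=\partial 0=0$. I would not verify these properties by hand. Instead, I would argue that the functional $\psi\mapsto T'(\omega)$ is well defined on valuations, because it equals $\dualdot{\zeta,f^*\psi}$, which depends only on $\psi$. The characterisation in \cite{BB07} then forces the corresponding pair of currents to satisfy conditions (i) and (ii). Uniqueness of the representing pair, which holds because the map in \Cref{prop:gval-as-cur} is injective, gives $(C',T')=(0,T')$.

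The main obstacle is orientation bookkeeping. The transposition identities $\dualdot{C,\beta'_*\eta}=\dualdot{\beta'^*C,\eta}$ involve sign conventions for fibre integration over $\P_+(T^*_XY)$ and over the blowup. The boundary convention also enters: $\partial(\widetilde{X\times_Y\P_Y})=\P_+(T^*_XY)\times_X\P_X$. I would check these signs against the conventions of \Cref{sec:preliminaries}, using the definitions of the pullback and pushforward of currents as adjoints so that no sign is introduced. The discrepancy with \cite[Prop.~3.4.2]{alesker_intgeo} presumably comes from exactly this step, namely the placement of the $\alpha'_*\beta'^*C$ term in the second component.
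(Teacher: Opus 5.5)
Your proposal is correct and is essentially the paper's argument: represent $\psi$ by compactly supported forms $(\varphi,\omega)$, use Alesker's description $(\beta'_*\alpha'^*\omega,\beta_*\alpha^*\omega)$ of $f^*\psi$, and transpose the fibre integrations and form pullbacks into pullbacks and pushforwards of currents, giving $\langle f_*\zeta,\psi\rangle=\langle \alpha'_*\beta'^*C+\alpha_*\beta^*T,\omega\rangle$. Your extra steps are harmless but unnecessary. The localisation to embeddings is not needed, and neither is the separate check that $(0,T')$ lies in the image of \Cref{prop:gval-as-cur}, because the pair representing $f_*\zeta$ is by construction the functional $(\varphi,\omega)\mapsto\langle f_*\zeta,\psi\rangle$ on pairs of forms, so it can be read off directly as $(0,T')$.
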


\subsubsection*{Submersions.}
When~$f$ is a submersion which is proper on the support of~$\zeta$, two remarks are in order. First, since the pullback of a smooth valuation by a submersion may not be smooth, the proof that such a definition makes sense requires to check that the product of generalized valuations~$\zeta\cdot f^*\psi$ is well-defined. Denoting~$\WF(\zeta) = (\Lambda,\Gamma)$, this will be the case when~$\Gamma \cap T^*_{X\times_Y \P_Y}\P_X = \emptyset$; see~\cite[Sec.~3.6]{alesker_intgeo}. Second, the pushforward of generalized valuations is defined in loc. cit. for proper submersions or for compactly supported generalized valuations. Yet, the definition clearly make sense when the map~$f$ is proper on the support of~$\zeta$. 

Moreover, denoting by~$(C',T')$ the pair of currents representing~$f_*\zeta$, it is proven in~\cite[Prop.~4.6]{AB17} that:
\begin{equation}\label{eq:T'-pushforward}
    T' = \bar{\tau}_*\bar{df}^*T.
\end{equation}
More precisely, loc. cit. proves this formula when~$f$ is proper, but the assumption that~$f$ is proper on the support of~$\zeta$ is sufficient once again. Finally, we point out that the description of~$C'$ is left as an open question in loc cit..

\subsection{Restriction to constructible functions}
In contrast to the pullback situation, the definition of pushforward of generalized valuations by submersions require a condition on wave front sets even when the generalized valuation come from a constructible function. This is due to the fact that the currents associated to the generalized valuation~$f^*\psi$ for~$\psi\in\val[\cpt][Y]$ are not subanalytic. As a consequence, we require a transversality assumption with respect to an \emph{angular} stratification of the normal cycle.
\begin{thm}[see \Cref{main:pushforward}]\label{thm:pushforward-CFgval}
    Let~$\phi\in\CF$ and let~$f:X\to Y$ be a real analytic map between real analytic manifolds. 
    If any one of the following two conditions is satisfied:
    \begin{enumerate}
        \item $f$ is an immersion, or
        \item $f$ is a submersion which is proper on the support of~$\phi$ and such that~$\bar{df}:X\times_Y\P_Y\hookrightarrow\P_X$ is angularly transverse to~$N(\phi)$,
    \end{enumerate}
    then~$f_*\phi\in\CF[][Y]$ and~$f_*\!\emCFgval[\phi]\in\gval[Y]$ are well-defined and~$f_*\!\emCFgval[\phi] = \emCFgval[f_*\phi]$. 
\end{thm}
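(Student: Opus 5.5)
The plan mirrors the proof of \Cref{thm:pullback-CFgval}: translate the sheaf-theoretic identity of \Cref{lem:pushforward-CC} into an identity of currents, then identify both sides with the pair $(C',T')$ describing $f_*\emCFgval[\phi]$.

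\textbf{Immersions.} When $f$ is an immersion, $f_*\phi$ is extension by zero and is clearly constructible. If $n=m$, $f$ is a local diffeomorphism and everything follows from naturality, so assume $n<m$. By \Cref{def:pushforward-immersions}, we must show $C_{f_*\phi}=0$, which holds because $\supp(f_*\phi)$ has dimension $<m$. We must also show
\[
N(f_*\phi)=\alpha_*\beta^*N(\phi)+\alpha'_*\beta'^*C_\phi .
\]
Locally $f$ is a closed embedding. Apply \Cref{lem:pushforward-CC} with \Cref{lem:cur-BM-pushforward,lem:cur-BM-pullback}: the map $df\colon X\times_Y T^*Y\to T^*X$ is a submersion, so its pullback is an honest pullback of currents and agrees with the Borel--Moore one. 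This gives $\CC(f_*\phi)=\tau_*df^*\CC(\phi)$ as subanalytic currents. Split $\CC(\phi)=C_\phi+s_*\cone(N(\phi))$ and handle the two pieces separately:
\begin{itemize}
\item $\tau_*df^*C_\phi$ is supported on $X\times_Y T^*Y$, the conormal-type piece over $X$. Since $df^{-1}(\zero_X)=T^*_XY$ as a bundle over $X$, it equals $s_*\cone(\alpha'_*\beta'^*C_\phi)$ plus a zero-section part, and that zero-section part vanishes for dimension reasons.
\item For $\tau_*df^*\cone(N(\phi))$, use the blow-up diagrams \eqref{eq:alpha-diag-pullback} and \eqref{eq:beta-diag-pullback} with the arrows reversed, together with the base change lemma, to obtain $\cone(\alpha_*\beta^*N(\phi))$.
\end{itemize}
Orientation and sign conventions ($s_*$) need care here, which is exactly where the correction to \cite{alesker_intgeo} comes from.

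\textbf{Submersions.} Here $f_*\phi$ is constructible by \cite[Sec.~9.7]{KS90}, since $f$ is proper on $\supp\phi$. Write $\WF(\emCFgval[\phi])=(\Lambda,\Gamma)$. Angular transversality of $\bar{df}$ to $N(\phi)$ gives $\Gamma\cap T^*_{X\times_Y\P_Y}\P_X=\emptyset$, so $f_*\emCFgval[\phi]$ is well-defined with $T'=\bar\tau_*\bar{df}^*N(\phi)$ by \eqref{eq:T'-pushforward}. Next, again by \Cref{lem:pushforward-CC,lem:cur-BM-pullback,lem:cur-BM-pushforward} (using transversality of $df$, a closed embedding, to a stratification defining $\cone N(\phi)$, inherited from the angular one), we get $\CC(f_*\phi)=\tau_*df^*\CC(\phi)$ as currents. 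Its conical part equals $s_*\cone(\bar\tau_*\bar{df}^*N(\phi))$ by a Cartesian-square argument with $q_X,q_Y$ like the one in the exterior product proof, noting that $df^*$ of the zero section contributes nothing off $\zero_Y$. Hence $N(f_*\phi)=T'$.

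\textbf{The main obstacle} is identifying $C'$, which is not described in \cite{AB17}. Consider $\zeta:=f_*\emCFgval[\phi]-\emCFgval[f_*\phi]$. Its $T$-component vanishes, so by \Cref{lemma_t_equals_0} it equals $\lambda\chi$ on each connected component of $Y$. To show $\lambda=0$, pair $\zeta$ with a compactly supported smooth valuation $\mu$ given by a pure form $(\oldphi,0)$ with $\int\oldphi\neq 0$. Then $\langle f_*\emCFgval[\phi],\mu\rangle=\langle\emCFgval[\phi],f^*\mu\rangle$, and $f^*\mu$ is the measure $f^*\oldphi$ (pullback by a submersion of a smooth measure, pair $(f^*\oldphi,0)$ in the current picture). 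This gives $\int_{C_\phi}f^*\oldphi=\int_Y(\int_{\text{fibre}}\phi)\,\oldphi$, computed with Lebesgue rather than Euler integration. On the other side, $C_{f_*\phi}(\oldphi)$ involves Euler integrals on the fibres. These two differ, so the identification must use the valuation structure more carefully: I would instead take $\mu=\chi\cdot$(bump), or compute $C'$ directly through the constraint $\partial C'=\pi_{Y*}T'=\partial C_{f_*\phi}$ together with support considerations (compact support of $\phi$ along fibres forces $\lambda=0$ far away from $f(\supp\phi)$ when a component of $Y$ is noncompact). The compact-$Y$ case would need a separate evaluation against $\chi$, using $\int_Y f_*\phi\,d\chi=\int_X\phi\,d\chi$ and $f^*\chi=\chi$. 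This final normalization step is where I expect the difficulty.
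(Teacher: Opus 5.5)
Your derivation of $N(f_*\phi)=\bar\tau_*\bar{df}^*N(\phi)$ in the submersion case is essentially the paper's argument. The paper passes to unit cosphere bundles via $\iota_X,\iota_Y$, which disposes of the zero-section term just as your restriction away from $\zero_Y$ does.

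For immersions you take a different route. The paper does not use \Cref{lem:pushforward-CC} there. Instead it reduces to the normal form $f_*\phi=\phi\boxtimes\1_{\{0\}^p}$ and matches the two terms of the exterior-product formula in \Cref{prop:exterior-product-CFgval} with $\alpha_*\beta^*T$ and $\alpha'_*\beta'^*C$. Your route is plausible, but it needs a Borel--Moore/current comparison for pullback under the submersion $df$. \Cref{lem:cur-BM-pullback} covers only closed embeddings, so it does not supply this.

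The genuine gap is the final normalization $\lambda=0$. You leave it open, and none of the tools you propose closes it.

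\begin{itemize}
\item The pairing with a smooth measure rests on a wrong formula. $f^*\oldphi$ is an $m$-form on the $n$-manifold $X$, so for $m<n$ neither $(f^*\oldphi,0)$ nor $\int_{C_\phi}f^*\oldphi$ makes sense. Applying the submersion pullback $(f^*C,\bar{df}_*\bar\tau^*T)$ to the pair of a measure shows that $f^*\mu$ has zero $C$-component and a $T$-component supported on $\bar{df}(X\times_Y\P_Y)$. Its pairing with $[\phi]$ should equal $\int_Y(f_*\phi)\,\oldphi$, with Euler integration along fibres. So the ``Lebesgue versus Euler'' discrepancy you found is an artefact, and evaluating that pairing directly is as hard as the theorem itself.
\item The boundary constraint $\partial C'=\partial C_{f_*\phi}$ only recovers \Cref{lemma_t_equals_0} again.
\item The support argument needs $f(\supp\phi)$ to miss a point of the component. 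This can fail for noncompact $Y$, because $\supp\phi$ need not be compact: take $\phi=\1_{\R\times S^1}$ pushed to $\R$.
\item Pairing with $\chi$ on compact $Y$ only gives $\lambda\chi(Y)=0$. This says nothing when $\chi(Y)=0$, for example $\phi=\1_{S^1\times S^1}$ pushed to $S^1$.
\end{itemize}

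The missing idea is to force the support argument by splitting $\phi$. If $\dim Y>0$, write $\phi=\phi_1+\phi_2$ with $f(\supp\phi_j)\neq Y$. The paper takes this decomposition from \cite[Prop.~6.2.1]{alesker_val_man4}. Each piece must still satisfy the hypothesis of (ii), so that all pushforwards involved are defined and additivity applies. Since $\supp\big(f_*[\phi_j]-[f_*\phi_j]\big)\subset f(\supp\phi_j)$, each difference is a multiple of $\chi$ on the connected $Y$ whose support is not all of $Y$, hence zero. Only $Y=\pt$ then remains. There your $\chi$-pairing does work, since $\chi(\pt)=1$; the paper instead quotes \cite[Prop.~8.3.1]{alesker_val_man4} for $f_*[\phi]=\big(\int_X\phi\dEuler\big)\,\chi$.
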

\begin{proof}
    Recall that~$(C',T')$ are associated to~$f_*\!\emCFgval[\phi]$ and denote~$(C,T) = (C_{\phi}, N(\phi))$ and~$(C'',T'') = (C_{f_*\phi}, N(f_*\phi))$.
    
    (i) Assume first that~$f$ is an immersion. By additivity and properness of~$f$ on the support of~$\phi$, we can reduce to the case were~$f$ is a closed embedding with~$n < m$, the case~$n=m$ being trivial. We clearly have~$C'' = 0 = C'$, so we are left to prove that:
    \begin{equation}
        \label{eq:pushforward-embedding-normal}
        T'' = \alpha_*\beta^* T + \alpha'_*\beta'^*C.
    \end{equation}
    Reducing to local computations and using the (real analytic) normal form of immersions, one can reduce to the case where~$X=\R^n$,~$Y=\R^{n+p}$ and~$f:\R^n\to\R^{n+p}$ is the inclusion along the first coordinates~$x\mapsto (x,0)$. In that case, we have~$f_*\phi = \phi \boxtimes \1_{\{0\}^p}$. We are thus left to prove that~\eqref{eq:pushforward-embedding-normal} follows from \Cref{prop:exterior-product-CFgval}. Following~\Cref{sec:exterior-product}, consider the diagrams of canonical maps
    \begin{equation*}
        \P_{\R^{n+p}} \overset{i_2}{\longleftarrow} \R^{n} \times \P_{\R^p} \overset{p_2}{\longrightarrow} \P_{\R^p}
    \end{equation*}
    and 
    \begin{equation*}
        \begin{tikzcd}
            \P_{\R^n}\times \P_{\R^p} & \hat{\P}_{\R^{n+p}} \arrow[l, "\Phi"', two heads] \arrow[r, "F", two heads]                                                             & \P_{\R^{n+p}}
        \end{tikzcd}.
    \end{equation*}
    Let us also consider the manifold~$\P_0 =\P_+(T^*_{\{0\}^p}\R^p)$ and the embedding~$i: \P_0 \hookrightarrow \P_{\R^p}$, the normal cycle~$T_0 = N(\1_{\{0\}^p}) = i_*\intcur{\P_0}$ and denote by~$\tilde{p} : \P_{\R^{n+p}}\to \R^n$ the composition of the canonical maps~$\P_{\R^{n+p}}\to \R^{n+p}\to \R^n$. Then, \Cref{prop:exterior-product-CFgval} ensures that:
    \begin{equation*}
        T'' = F_*\Phi^*(T\boxtimes T_0) + \tilde{p}^*C\cap i_{2*}p_2^*T_0.
    \end{equation*}
    The result will then follow from the equalities:
    \begin{align}
        F_*\Phi^*(T\boxtimes T_0) = \alpha_*\beta^*T, \label{eq:pushforward-immersion-1}\\
        \tilde{p}^*C\cap i_{2*}p_2^*T_0 = \alpha'_*\beta'^*C.\label{eq:pushforward-immersion-2}
    \end{align}
    To prove~\eqref{eq:pushforward-immersion-1}, consider the following commutative diagram:
    \begin{equation}\label{eq:diag-pushforward-immersion}
        \begin{tikzcd}
            \P_{\R^n}                                                                                       &                                                                                                                                                                                         & \P_{\R^n}\times\P_0 \arrow[rr, "j", hook] \arrow[ll, "q"', two heads]                        &  & \P_{\R^n}\times\P_{\R^p}                                           \\
                                                                                                            & \Int\left(\hat{\P}_0\right) \arrow[rd, "\tilde{\alpha}"'] \arrow[r, "j''", hook] \arrow[ld, "\iota", hook'] \arrow[ru, "\tilde{\Phi}", two heads] \arrow[lu, "\tilde{\beta}"', two heads] & \hat{\P}_0 \arrow[u, "\Phi_0"', two heads] \arrow[rr, "j'", hook] \arrow[d, "F_0"] &  & \hat{\P}_{\R^{n+p}} \arrow[u, "\Phi"', two heads] \arrow[lld, "F"] \\
            \tilde{\R^n\times_{\R^{n+p}}\P_{\R^{n+p}}} \arrow[uu, "\beta", two heads] \arrow[rr, "\alpha"'] &                                                                                                                                                                                         & \P_{\R^{n+p}}                                                                                &  &                                                                   
        \end{tikzcd}
    \end{equation}
    where~$j=\id_{\P_{\R^n}} \times i$, the smooth manifold with boundary $\hat{\P}_0$ and the maps $\Phi_0$ and $j'$ are defined as the pullback of the fiber bundle $\Phi:\hat{\P}_{\R^{n+p}}\twoheadrightarrow\P_{\R^n}\times\P_{\R^p}$ by $j$ and all other maps are canonical maps. More precisely, following the notations of \Cref{sec:exterior-product}, the manifold~$\hat{\P}_0$ is the closure of the set~$\{(x_1,0,[\xi_1:\xi_2],[\xi_1],[\xi_2])\}$ inside~$\P_{\R^{n+p}}\times_{\R^{n+p}} (\P_{\R^n}\times\P_{\R^p})$. Similarly, in the coordinates of~\eqref{eq:oriented-blowup-sphere} the map~$\iota$ is the obvious map:
    \begin{equation*}
        \iota(x_1,0,[\xi_1:\xi_2],[\xi_1],[\xi_2]) = (x_1,[\xi_1:\xi_2],[\xi_1]),
    \end{equation*}
    and the other maps of~\eqref{eq:diag-pushforward-immersion} are uniquely defined by the commutativity of the diagram. 

    Using that the top right square of~\eqref{eq:diag-pushforward-immersion} is Cartesian, we can compute:
    \begin{equation}\label{eq:pushforward-intermediate}
        F_*\Phi^*(T\boxtimes T_0) = F_*\Phi^*j_*q^*T        = F_*j'_*\Phi_0^*q^*T.
    \end{equation}
    Arguing as in the proof of~\Cref{prop:exterior-product-CFgval}, we have the equality of operations on currents~$\Phi_0^* = j''_*\tilde{\Phi}^*$ and~$\beta^* = \iota_*\tilde{\beta}^*$.
    Indeed, for any compactly supported smooth differential form~$\omega$ on~$\hat{\P}_0$ the equality~$\Phi_{0*}\omega =\tilde{\Phi}_*  j''^*\omega$ follows from the fact the fibers of~$\Phi_0$ are the closure of the fibers of~$\tilde{\Phi}$ in~$\hat{\P}_0$ so that the integration along the fibers of~$\Phi_0$ of the smooth form~$\omega$ and the integration along the fibers of~$\tilde{\Phi}$ of the restriction~$j''^*\omega$ of~$\omega$ to the interior~$\Int(\hat{\P}_{0})$ coincide. A similar proof holds for~$\beta^* = \iota_*\tilde{\beta}^*$. Thus~\eqref{eq:pushforward-intermediate} implies:
    \begin{align*}
        F_*\Phi^*(T\boxtimes T_0) 
        &= F_*j'_*j''_*\tilde{\Phi}^*q^*T \\
        &= \tilde{\alpha}_*\tilde{\beta}^*T \\
        &= \alpha_*\iota_*\tilde{\beta}^*T \\
        &= \alpha_*\beta^*T.
    \end{align*}

    To prove~\eqref{eq:pushforward-immersion-2}, consider the following diagram:
    \begin{equation*}
        \begin{tikzcd}
            &                                 & \R^n\times\P_0 \arrow[r, "q_2"', two heads] \arrow[d, "i'", hook] \arrow[ld, "\alpha'", hook'] \arrow[lld, "\beta'"'] & \P_0 \arrow[d, "i", hook] \\
        \R^n & \P_{\R^{n+p}} \arrow[l, "\tilde{p}"] & \R^n\times\P_{\R^p} \arrow[r, "p_2"', two heads] \arrow[l, "i_2", hook']                                               & \P_{\R^p}                 
        \end{tikzcd}
    \end{equation*}
    where the square is Cartesian up to a change of orientation by~$(-1)^{n(p-1)}$ (corresponding to exchanging the two factors). 
    Moreover, one has that
    \begin{equation*}
        q_2^*\intcur{\P_0} = (-1)^{n(p-1)}\intcur{\R^n \times\P_0},
    \end{equation*}
    so that:
    \begin{equation*}
        i_{2*}p_2^*T_0 = (-1)^{n(p-1)} i_{2*}i'_*q_2^*\intcur{\P_0} = \alpha'_*\left(\intcur{\R^n \times\P_0}\right). 
    \end{equation*}
    Therefore, we have:
    \begin{align*}
        \tilde{p}^*C\cap i_{2*}p_2^*T_0 
        &= \tilde{p}^*C \cap \alpha'_*\left(\intcur{\R^n \times\P_0}\right) \\
        &= \alpha'_*\left(\alpha'^*\tilde{p}^*C \cap \intcur{\R^n \times\P_0}\right) \\
        &= \alpha'_*\beta'^*C.
    \end{align*}

    (ii) Assume that $f$ is a submersion such that~$\bar{df}$ is angularly transverse to~$N(\phi)$. This transversality assumption ensures that $f_*\!\emCFgval[\phi]$ is well-defined by \cite[Prop.~4.5]{AB17}. Since $f$ is proper on~$\supp(\phi)$, the pushforward~$f_*\phi$ is also well-defined.
    Again, using additivity of the pushforward and the properness of~$f$ on the support of~$\phi$, we can reduce to the case where~$\phi$ is supported in an open subset where the submersion~$f$ can be put in real analytic normal form. It is thus sufficient to prove the result for~$X=\R^n$,~$Y=\R^m$ and~$f:\R^n\to\R^m$ is the projection on the first coordinates.

    Now, let us prove that $T'' = \bar{\tau}_*\bar{df}^*T$, i.e., that $N(f_*\phi) = \bar{\tau}_*\bar{df}^*N(\phi)$. To do so, we use the formula for the pushforward operation on characteristic cycles (\Cref{lem:pushforward-CC}) which ensures that~$\CC(f_*\phi) = \tau_*df^*\CC(\phi)$ as Borel-Moore homology classes. 
    Using the canonical metrics on~$X=\R^n$ and~$Y=\R^m$ (which amounts to have chosen local real analytic metrics on~$X$ and~$Y$), we can identify the projective bundles~$\P_X$ and~$\P_Y$ with unit cosphere bundles~$S^*X\subset T^*X$ and~$S^*Y\subset T^*Y$, yielding a commutative diagram:
        \begin{equation}\label{eq:cosphere-bundles}
        \begin{tikzcd}
            \P_X         \arrow[d, "\iota_X"', hook']                                                            & X\times_Y\P_Y \arrow[l, "\bar{df}"', hook'] \arrow[r, "\bar{\tau}", two heads]  \arrow[d, "\iota"', hook']                                                                                                       & \P_Y    \arrow[d, "\iota_Y"', hook']                                                              \\
            T^*X                                                                      & X\times_Y T^*Y \arrow[l, "df"', hook'] \arrow[r, "\tau", two heads]                                                                                                                    & T^*Y                                                                     
        \end{tikzcd}
    \end{equation}
    Note that the square on the right of~\eqref{eq:cosphere-bundles} is Cartesian. Therefore, we have the equalities of Borel-Moore homology classes:
    \begin{align*}
        \iota_Y^*\CC(f_*\phi) &= \iota_Y^*\tau_*df^*\CC(\phi) \\
        &= \bar{\tau}_*\iota^*df^*\CC(\phi)\\
        &= \bar{\tau}_*\bar{df}^*\iota_X^*\CC(\phi).
    \end{align*}
    Note that due to conicality, the map~$\iota_Y$ is transverse to the current~$\CC(f_*\phi)$ and~$\iota_X$ to $\CC(\phi)$. Moreover, our transversality assumption ensures that the map~$\bar{df}$ is transverse to the current~$\iota_X^*\CC(\phi)=N(\phi)$, so that \Cref{lem:cur-BM-pullback,lem:cur-BM-pushforward} ensure that we have~$\iota_Y^*\CC(f_*\phi) = \bar{\tau}_*\bar{df}^*\iota_X^*\CC(\phi)$ as subanalytic currents, that is, $N(f_*\phi) = \bar{\tau}_*\bar{df}^*N(\phi)$.

    Therefore, the generalized valuation~$f_*\emCFgval[\phi] - \emCFgval[f_*\phi] \in\gval[Y]$ is represented by the pair of currents~$(C'-C'',0)$. By additivity, we can assume without loss of generality that~$X$ is connected, so that~\Cref{lemma_t_equals_0} ensures~$f_*\emCFgval[\phi] - \emCFgval[f_*\phi] = \lambda \chi$ for some~$\lambda \in \C$. We are thus left to prove that~$\lambda = 0$. It follows from the definitions that the support of~$f_*\emCFgval[\phi] - \emCFgval[f_*\phi]$ is included in~$f(\supp(\phi))$; see e.g.~\cite[Eq.~(28)]{AB09}. Therefore, if~$f(\supp(\phi)) \ne Y$, then we must have~$\lambda = 0$. More generally, if~$\dim(Y) > 0$, we can use \cite[Prop.~6.2.1]{alesker_val_man4} to write $\phi = \phi_1 + \phi_2$ with $f(\supp(\phi_j)) \ne Y$ for~$j=1,2$. By additivity of pushforward operations, we can reduce to the previous case and prove again that~$\lambda=0$. If~$\dim(Y) = 0$, we can assume~$Y = \{\pt\}$ without loss of generality so that~$f_*\phi$ is constant equal to~$\mu = \int_X\phi\dEuler$, and hence~$\emCFgval[f_*\phi] = \mu\cdot\chi \in\gval[Y]$. Similarly, we have~$f_*\emCFgval[\phi] = \mu\cdot\chi$ by~\cite[Prop.~8.3.1]{alesker_val_man4}, so~$f_*\emCFgval[\phi] = \emCFgval[f_*\phi]$ once again.
\end{proof}

\subsection{Convolution}\label{sec:convolution}
We recall the definition of the convolution of generalized valuations introduced in~\cite{AB17}. Let~$G$ be a Lie group acting transitively on a smooth manifold~$X$. As explained in the introduction, the manifolds~$G$ and~$X$ are then equipped with canonical real analytic structures and the action~$a:G\times X \to X$ is real analytic. The \emph{convolution} of the generalized valuations~$\mu\in\gval[G]$ and~$\zeta\in\gval[X]$ is defined as~$\mu*\zeta := a_*(\mu\boxtimes\zeta)\in\gval[X]$, provided that the pushforward on the right-hand side is well-defined. 

We can now prove our result on the restriction of the convolution of generalized valuations to constructible functions.
\begin{proof}[Proof of~\Cref{cor:convolution-CFgval}]
    The convolution of the constructible functions~$\phi\in\CF[][G]$ and~$\psi\in\CF[][X]$ is defined as soon as the action~$a:G\times X \to X$ is proper on the support of $\phi\boxtimes\psi$, which is assumed in~(ii) of~\Cref{main:pushforward}. Therefore, under this assumption, the convolution~$\emCFgval[\phi]*\emCFgval[\psi]$ is well-defined in the sense of generalized valuations and so is $\phi * \psi$ in the sense of constructible functions. The result then directly follows from Alesker's result on the exterior product~(\Cref{prop:exterior-product-CFgval}) and our result on the pushforward~(\Cref{main:pushforward}).
\end{proof}

\section{Additive and multiplicative kinematic formulas}\label{sec:kinematic-formulas}

In this section we prove additive and multiplicative kinematic formulas for constructible functions. As we have noted earlier, the convolution product of constructible functions is a good replacement for the Minkowski sum of convex bodies. We will prove such formulas in two different cases. 

The first case is on a Euclidean vector space acting on itself by addition. We consider valuations that are translation-invariant and invariant with respect to a subgroup of the orthogonal group that acts transitively on the unit sphere. In this case, additive kinematic formulas for convex bodies are well-known. To prove their extension to the setting of constructible functions we first prove a statement of independent interest (Theorem \ref{thm_compatibility_convolution_generalized}) that relates compactly supported generalized valuations, translation-invariant generalized valuations and their respective convolution products. It generalizes results of the first named author with Alesker in the smooth case \cite{AB17} and with Faifman in the polytopal case \cite{bernig_faifman16}. 

The second case is that of constructible functions on the Lie group $S^3$. The convolution product of two geodesically convex subsets of $S^3$ is in general not geodesically convex. Hence, there are no multiplicative kinematic formulas for geodesically convex sets and we are forced to state our formulas with constructible functions.  

In both cases the convolution of compactly supported constructible functions is well-defined. However, the convolution product of the associated generalized valuations is only defined under some conditions. We will show that these conditions are satisfied for constructible functions in generic position.

\subsection{Compactly supported versus translation-invariant valuations}

In order to distinguish between $\R^n$ and its dual, it will be convenient to write~$V:=\R^n$. 

\begin{defi} \label{def_transversal_compact}
Two compactly supported generalized valuations~$\zeta_1,\zeta_2\in\mathcal{V}^{-\infty}_\cpt(V)$ are \emph{transverse} if there are closed conical sets $\Lambda_i \subset T^*V \setminus \underline 0$ and~$\Gamma_i \subset T^*\P_V \setminus \underline 0$ with~$\WF(\zeta_i) \subset (\Lambda_i,\Gamma_i)$ such that the following condition holds:
\begin{itemize}
    \item[] There are no $(x_i,[\xi]) \in \P_V, i=1,2$ and $\eta \in T^*_{[\xi]} \P_+(V^*) \subset T^*_{(x_i,[\xi])}\P_V$  such that $(x_1,[\xi],\eta) \in \Gamma_1$ and $(x_2,[\xi],-\eta) \in \Gamma_2$. 
\end{itemize}	 
\end{defi}

\begin{prop} \label{prop_transversal_compact_valuations}
If $\zeta_1$ and $\zeta_2$ are transverse, then the convolution $\zeta_1 * \zeta_2$ is well-defined. More precisely, denoting
$(\Lambda_i,\Gamma_i)$ as in the previous definition, the convolution on compactly supported smooth valuations on $V$ extends as a jointly sequentially continuous map 
\begin{displaymath}
    \mathcal V^{-\infty}_{\cpt,\Lambda_1,\Gamma_1}(V) \times \mathcal V^{-\infty}_{\cpt,\Lambda_2,\Gamma_2}(V) \to \mathcal V^{-\infty}_\cpt(V).
\end{displaymath}
\end{prop}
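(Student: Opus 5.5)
The plan is to write the convolution as the pushforward of the exterior product under addition, $\zeta_1*\zeta_2 = a_*(\zeta_1\boxtimes\zeta_2)$ with $a:V\times V\to V$, $a(x,y)=x+y$. Since both valuations are compactly supported, $a$ is proper on $\supp(\zeta_1\boxtimes\zeta_2)$. By the discussion preceding \Cref{thm:pushforward-CFgval}, the pushforward of a generalized valuation by a submersion is defined once the wave front set of the $T$-component avoids the conormal bundle of $\bar{da}:(V\times V)\times_V\P_V\hookrightarrow\P_{V\times V}$. The $T$-component of the result is then $\bar\tau_*\bar{da}^*T$, by \eqref{eq:T'-pushforward}. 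So the proof has three steps: compute a wave front bound for $\zeta_1\boxtimes\zeta_2$, check the non-intersection condition, and deduce continuity.

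\textbf{Step 1.} Bound the wave front set of $\zeta_1\boxtimes\zeta_2$ using \Cref{def_ext_prod}. Here $C=C_1\boxtimes C_2$ and $T=F_*\Phi^*(T_1\boxtimes T_2)+(\text{mixed terms})$. I would use H\"ormander's product rule (as in \Cref{lemma_product_angular_stratifications}) together with the pullback and pushforward wave front calculus (\Cref{prop:pullback-cur} and \eqref{eq:pushforward-WF}). The image of $\bar{da}$ is the set $\{(x,y,[\xi:\xi])\}$, which lies in the interior of $\P_{V\times V}\setminus\mathcal M$, away from the blowup locus. Near this image $F$ is a diffeomorphism and only the term $F_*\Phi^*(T_1\boxtimes T_2)$ contributes. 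Locally $F^{-1}$ is then the map $(x,y,[\xi:\xi])\mapsto(x,[\xi],y,[\xi])$, so the wave front set of $T$ there is controlled by $\Gamma_1\times\Gamma_2$, $\Gamma_1\times\zero$ and $\zero\times\Gamma_2$.

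\textbf{Step 2.} Compute the conormal bundle of the submanifold $\{(x,y,[\xi,\xi])\}$ in the chart $V\times V\times\P_+(V^*\times V^*)$. A covector at such a point is conormal exactly when it vanishes on the tangent directions $(\dot x,\dot y,\dot\theta,\dot\theta)$. This means its $x$- and $y$-components vanish and its two fibre components are $\eta$ and $-\eta$ with $\eta\in T^*_{[\xi]}\P_+(V^*)$. Pulling back to $\P_V\times\P_V$, this corresponds to $((x,[\xi],\eta),(y,[\xi],-\eta))$. The transversality hypothesis in \Cref{def_transversal_compact} says that no such pair lies in $\Gamma_1\times\Gamma_2$. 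The components $\Gamma_1\times\zero$ and $\zero\times\Gamma_2$ would force $\eta=0$, and then the covector is zero, which is excluded. So the condition of \cite[Sec.~3.6]{alesker_intgeo} (equivalently \cite[Prop.~4.5]{AB17}) holds, and $a_*(\zeta_1\boxtimes\zeta_2)$ is defined.

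\textbf{Step 3.} For joint sequential continuity, I would note that $\boxtimes$ is jointly sequentially continuous into $\mathcal V^{-\infty}_{\Lambda,\Gamma}$ for the product cone $\Gamma$ in the H\"ormander topology. Then pullback $\bar{da}^*$ is sequentially continuous on currents with wave front set in a fixed cone avoiding the conormal (\Cref{prop:pullback-cur}), and $\bar\tau_*$ is continuous by properness. The $C$-component is also $a_*(C_1\boxtimes C_2)$, by the defining adjunction, and is continuous as well. Finally, uniqueness of the extension comes from density of smooth compactly supported valuations in $\mathcal V^{-\infty}_{\cpt,\Lambda_i,\Gamma_i}$ with the H\"ormander topology, so that the map extends the convolution of smooth valuations.

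The main obstacle is Step 1. One has to check carefully that the mixed terms and the blowup boundary $\mathcal N$ stay away from the image of $\bar{da}$. If they do not, one has to handle the wave front set of $F_*\Phi^*$ near $\mathcal M$. I would first try to argue that $\xi_1=\xi_2\neq0$ on the image of $\bar{da}$, so that everything localizes away from $\mathcal M$.
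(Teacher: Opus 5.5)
\textbf{Steps 1 and 2.} These are the paper's argument. The paper also writes the convolution as $a_*(\zeta_1\boxtimes\zeta_2)$. It takes continuity of $\boxtimes$ into $\mathcal V^{-\infty}_{\cpt,\Lambda,\Gamma}(V\times V)$ from \cite[Prop.~4.1]{AB17}, and continuity of the pushforward from \cite[Prop.~4.5]{AB17} provided $\Gamma$ misses $T^*_{(V\times V)\times_V\P_V}\P_{V\times V}$. It then checks this condition as you do: covectors over $(x_1,x_2,[\xi:\xi])$ are viewed as pairs $(\rho_1,\rho_2)$ via the injection dual to $d(\Phi\circ F^{-1})$, and one splits $\rho_i=\rho_i'+\rho_i''$.

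The obstacle you flag is not a real one. The mixed terms of \Cref{def_ext_prod} are supported in the closed set $\mathcal M_1\cup\mathcal M_2$, which does not meet the points $[\xi:\xi]$ with $\xi\neq0$. By locality of wave front sets, only $F_*\Phi^*(T_1\boxtimes T_2)$ is seen there, and off $\mathcal M$ the map $\Phi\circ F^{-1}$ is a submersion.

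A smaller point: your description of the conormal fibre in $T^*\P_+(V^*\oplus V^*)$ is too small; it is $\{(u,-u):u\in V\}$. However, the extra covectors do not annihilate the direction of $\lambda\mapsto[\xi:\lambda\xi]$. Hence they lie outside the image of $d(\Phi\circ F^{-1})^*$, which contains the wave front set, and your conclusion stands.

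\textbf{The error in Step 3.} The $C$-component of $a_*(\zeta_1\boxtimes\zeta_2)$ is \emph{not} $a_*(C_1\boxtimes C_2)$. The latter is the current pushforward of a $2n$-current to the $n$-dimensional space $V$, so it vanishes identically. Yet for convex bodies $K,L$ with nonempty interior, $\langle[\1_K]*[\1_L],\vol\rangle=\vol(K+L)\neq0$. Pairing with $\vol$ only sees the $C$-component, so $C'\neq0$; in general $C'$ must satisfy $\partial C'=\pi_{V*}T'$.

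The adjunction cannot give your formula. Indeed $\langle a_*\zeta,(\varphi,0)\rangle=\langle\zeta,a^*\varphi\rangle$, and $a^*\varphi$ is not a smooth measure on $V\times V$: on $K\times L$ it returns $\int_{K+L}\varphi$. So $C'$ depends on $T$ as well. The paper remarks that an explicit formula for $C'$ is left open in \cite{AB17}. As written, your continuity argument for the $C$-component therefore fails.

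\textbf{Two ways to repair it.}
\begin{enumerate}
\item Use the sequential continuity statement of \cite[Prop.~4.5]{AB17} for the whole pushforward on $\mathcal V^{-\infty}_{\cpt,\Lambda,\Gamma}(V\times V)$, as the paper does. You already invoke this result for well-definedness.
\item Recover $C'$ from $T'$. Assume the supports lie in a fixed compact set $K$, and let $\varphi$ be a compactly supported $n$-form. Choose $\alpha$ with $d\alpha=\varphi$ and a cutoff $\chi$ equal to $1$ near $K$. Then $\langle C',\varphi\rangle=\langle\pi_{V*}T',\chi\alpha\rangle$. So $C'$ is determined by $T'=\bar\tau_*\bar{da}^*T$ and is sequentially continuous in it.
\end{enumerate}
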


\proof 
With $(\Lambda,\Gamma)$ as in \cite[Prop.~4.1]{AB17}, the exterior product extends to a jointly sequentially continuous map 
\begin{displaymath}
\mathcal V^{-\infty}_{\cpt,\Lambda_1,\Gamma_1}(V) \times \mathcal V^{-\infty}_{\cpt,\Lambda_2,\Gamma_2}(V) \to \mathcal V^{-\infty}_{\cpt,\Lambda,\Gamma}(V \times V).
\end{displaymath}
Moreover, the pushforward extends to a sequentially continuous map 
\begin{displaymath}
\mathcal V^{-\infty}_{\cpt,\Lambda,\Gamma}(V \times V) \to \mathcal V^{-\infty}_\cpt(V),
\end{displaymath}
provided that $\Gamma$ is disjoint from $T^*_{(V \times V) \times_V \P_V}\P_{V \times V}$ \cite[Prop.~4.5]{AB17}. To check this last condition, notice that the submanifold~$(V \times V) \times_V \P_V$ consists of all points~$(x_1,x_2,[\xi:\xi]) \in \P_{V \times V}$. Then, it is easily checked that 
\begin{displaymath}
\Gamma_{(x_1,x_2,[\xi:\xi])}=\{(\rho_1,\rho_2) : \rho_i \in \Gamma_i|_{(x_i,[\xi])}\},
\end{displaymath}
where we use the injection
\begin{displaymath}
	T^*_{(x_1,[\xi])}\P_V \oplus T^*_{(x_2,[\xi])}\P_V \hookrightarrow T^*_{(x_1,x_2,[\xi:\xi])}\P_{V \times V}
\end{displaymath}
that is dual to the projection 
\begin{displaymath}
	d(\Phi \circ F^{-1}): T_{(x_1,x_2,[\xi:\xi])}\P_{V \times V} \twoheadrightarrow
	T_{(x_1,[\xi])}\P_V \oplus T_{(x_2,[\xi])}\P_V 
\end{displaymath}
from \cite[Section 4.1]{AB17}.

Let us write $\rho_i=\rho_i'+\rho_i''$ according to the decomposition
\begin{displaymath}
T^*_{(x_i,[\xi])} \P_V = T^*_{x_i}V \oplus T^*_{[\xi]} \P_+(V^*).
\end{displaymath}
Then $(\rho_1,\rho_2)$ vanishes on $T_{(x_1,x_2,[\xi:\xi])} (V \times V) \times_V \P_V$ if and only if $\rho_1'=\rho_2'=0$ and $\rho_1''=-\rho_2''$. This will never be the case when~$(\rho_1,\rho_2)\in\Gamma_{(x_1,x_2,[\xi:\xi])}$ by our transversality assumptions, hence the result.
\endproof

We want to compare the convolution of compactly supported valuations with the convolution of translation-invariant valuations and have to recall some definitions first. We refer to \cite{alesker_introduction, schneider_book14} for more information.  

The set of convex bodies in $V$ will be denoted by $\mathcal K^n$. A valuation on $V$ is a map $\mu:\mathcal K^n \to \R$ that satisfies
\begin{displaymath}
\mu(K \cup L)+\mu(K \cap L)=\mu(K)+\mu(L)
\end{displaymath}
whenever $K,L, K \cup L \in \mathcal K^n$. The vector space of translation-invariant and continuous valuations is denoted by $\Val$. It admits a decomposition $\Val=\bigoplus_{k=0}^n \Val_k$, where $\Val_k$ denotes the subspace of $k$-homogeneous elements. 

The space $\Val$ is a Banach space with a natural action of the group $\mathrm{GL}(n)$. The smooth vectors of this action form a dense subspace $\Val^\infty$ that admits a Fr\'echet space topology. 

A smooth translation-invariant valuation $\mu\in\Val^\infty$ can be written as 
\begin{displaymath}
\mu(K)=\int_K \oldphi+\int_{N(K)} \omega,
\end{displaymath} 
where $N(K)$ is the normal cycle of the convex body $K$ (see for instance~\cite{RZ19}) and $\oldphi \in \Omega^n(V)^{tr}$ and $\omega \in \Omega^{n-1}(\P_V)^{tr}$ are translation-invariant forms. Comparing with \eqref{eq:def-val} it is easy to show that $\Val^{\infty}(V)=(\mathcal V^{\infty}(V))^{tr}$, i.e. that the two notions of ``smooth and translation-invariant valuations'' coincide. 

The convolution of smooth translation-invariant valuations is a commutative and associative product on $\Val^\infty \otimes \Dens(V^*)$ that is characterized by the property
\begin{displaymath}
[\vol(\bullet+A) \otimes \vol^*] * 	[\vol(\bullet+B) \otimes \vol^*]=	[\vol(\bullet+A+B) \otimes \vol^*]
\end{displaymath}
for all convex bodies $A,B$ with smooth boundary of positive curvature.

Alesker, using the product of smooth translation-invariant valuations, constructed in \cite[Thm.~6.1.1]{alesker_val_man4} an injective map 
\begin{displaymath}
\PD:\Val^\infty(V) \hookrightarrow (\Val^\infty(V))^* \otimes \Dens(V)=:\Val^{-\infty}(V)
\end{displaymath}
with dense image. Elements on the right-hand side are called translation-invariant generalized valuations. Similar to \Cref{prop:gval-as-cur} they can be uniquely represented by a pair of currents~$(C,T)$, where $C$ is a translation-invariant $n$-current on $V$ (i.e. a multiple of the integration current) and $T$ is a translation-invariant Legendrian $(n-1)$-cycle on $\P_V$. The wave front set of a valuation is defined as the wave front set of $T$, which can be represented as a closed conical subset of $T^*\P_+(V^*) \setminus \underline 0$ by translation-invariance. The subset of $\Val^{-\infty}(V)$ consisting of valuations with wave front set contained in some closed conical set $\Gamma \subset T^*\P_+(V^*) \setminus \underline 0$ is denoted by $\Val_\Gamma^{-\infty}(V)$. It is endowed with the H\"ormander topology. 

\begin{defi} \label{def_transversal_translation}
Two elements $\mu_1, \mu_2 \in \Val^{-\infty}(V) \otimes \Dens(V^*)$ are \emph{transverse} if there are closed conical sets $\Gamma_i \subset T^* \P_+(V^*) \setminus \underline 0$ with $\WF(\mu_i) \subset \Gamma_i$ such that~$\Gamma_1 \cap s(\Gamma_2) = \emptyset$ where~$s:T^* \P_+(V^*)\to T^* \P_+(V^*)$ is the multiplication by~$-1$ in the fibers.
\end{defi} 

\begin{prop}[{\cite[Prop.~4.7]{bernig_faifman16}}] \label{prop_continuity_gen_translationinv}
If $\mu_1,\mu_2\in \Val^{-\infty}(V) \otimes \Dens(V^*)$ are transverse, then the convolution product $\mu_1 * \mu_2$ is defined. More precisely, if $\Gamma_1,\Gamma_2$ satisfy the condition of the previous definition, then the convolution extends as a jointly sequentially continuous map 
\begin{displaymath}
    \left(\Val_{\Gamma_1}^{-\infty}(V) \otimes \Dens(V^*)\right) \times \left(\Val_{\Gamma_2}^{-\infty}(V) \otimes \Dens(V^*)\right) \to \Val^{-\infty}(V) \otimes \Dens(V^*).
\end{displaymath}
\end{prop}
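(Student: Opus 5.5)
The plan is to reduce the convolution of translation-invariant generalized valuations to an intersection of currents on the sphere $\P_+(V^*)$. Hörmander's theorem on products of distributions with transverse wave front sets then gives existence together with joint sequential continuity.

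\emph{Step 1: the convolution on smooth valuations in terms of currents.} A smooth $\mu\in\Val^\infty(V)\otimes\Dens(V^*)$ corresponds to a pair $(C,T)$. Here $C=c\,\intcur{V}$ is a constant multiple of the integration current. By translation invariance, $T$ is a Legendrian cycle on $\P_V=V\times\P_+(V^*)$ that is the pullback of a current $\bar T$ on $\P_+(V^*)$ with values in $\Lambda^\bullet V^*$; equivalently, $\bar T$ is a sum of currents on the sphere tensored with constant forms on $V$. I would use the description of the convolution of Bernig--Fu: it is obtained from the wedge product of the $\P_+(V^*)$-parts of the representing forms, combined with a fixed linear operation (a Hodge-star type contraction) on the $V$-parts. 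Dualizing, the $T$-part of $\mu_1*\mu_2$ becomes
\[
\delta^*\big(\bar T_1\boxtimes\bar T_2\big),
\]
followed by a bounded algebraic operation on the $\Lambda^\bullet V^*$ coefficients. Here $\delta\colon\P_+(V^*)\hookrightarrow\P_+(V^*)\times\P_+(V^*)$ is the diagonal embedding, so $\delta^*(\bar T_1\boxtimes\bar T_2)=\bar T_1\cap\bar T_2$. The terms involving $C_1$ or $C_2$ are multiplications by constants, since $\intcur{V}$ acts essentially as the unit up to the density twist, and they cause no trouble.

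\emph{Step 2: extension via wave front sets.} By \cite[Thm.~8.2.9]{Hor03}, $\WF(\bar T_1\boxtimes\bar T_2)$ is contained in the union of $\Gamma_1\times\Gamma_2$, $\Gamma_1\times\zero$ and $\zero\times\Gamma_2$. The pullback by $\delta$ requires that no nonzero covector in this set annihilate $T\Delta$. For $(\eta_1,\eta_2)$ at a point $([\xi],[\xi])$, annihilating $T\Delta$ means $\eta_1=-\eta_2$. The condition therefore fails only if some $\eta\ne0$ satisfies $\eta\in\Gamma_1$ and $-\eta\in\Gamma_2$, i.e.\ only if $\Gamma_1\cap s(\Gamma_2)\ne\emptyset$. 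The mixed components never annihilate $T\Delta$, because a covector of the form $(\eta,0)$ with $\eta\ne0$ does not vanish on the diagonal. \Cref{prop:pullback-cur} then provides the pullback, and it is sequentially continuous in the Hörmander topology. The exterior product is jointly sequentially continuous $\cur{}_{\Gamma_1}\times\cur{}_{\Gamma_2}\to\cur{}_{\Gamma}$ with $\Gamma$ as above. The formula of Step 1 therefore defines a jointly sequentially continuous map on $\Val^{-\infty}_{\Gamma_1}\times\Val^{-\infty}_{\Gamma_2}$.

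\emph{Step 3: it is the extension.} It remains to check that the output is again a pair satisfying \Cref{prop:gval-as-cur} (translation-invariant version): it is Legendrian, closed, and satisfies the compatibility with $C$. These properties hold on the smooth valuations, which are sequentially dense in $\Val^{-\infty}_{\Gamma_i}$ for the Hörmander topology, and they pass to limits. Uniqueness of the extension also follows from density.

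The main obstacle is Step 1: pinning down precisely, with correct signs and density twists, the current-level formula for the convolution of translation-invariant valuations, so that it is visibly an intersection on $\P_+(V^*)$. I would first try to derive it by dualizing the Bernig--Fu formula for $*$ on representing forms through Alesker--Poincaré duality. The wave-front bookkeeping after that is routine.
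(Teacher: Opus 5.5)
The paper gives no proof of this proposition; it imports it from \cite[Prop.~4.7]{bernig_faifman16}, so your sketch has to stand on its own. Your treatment of the $T$-component is sound. Up to sign, the Bernig--Fu formula gives $T(\mu_1*\mu_2)=*_1^{-1}(*_1T_1\wedge *_1T_2)$: a wedge product on $\P_+(V^*)$ combined with a fixed bilinear map on $\Lambda^\bullet V^*$. The condition $\Gamma_1\cap s(\Gamma_2)=\emptyset$ is exactly H\"ormander's condition for this product.

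\textbf{The gap: the Euler characteristic component.} As written, your construction drops this component, so it does not extend the convolution.

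First, you misread the role of $C$. The pair $(\intcur{V},0)$ represents $\chi$ (cf.\ the proof of \Cref{lemma_t_equals_0}), not the unit. The unit $\vol\otimes\vol^*$ has $C=0$ and nonzero $T$. By \Cref{lemma_t_equals_0}, $T$ determines a translation-invariant valuation only modulo $\C\chi$. So the $C$-part of $\mu_1*\mu_2$ needs its own formula, and it is not produced by $C_1,C_2$ alone.

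Concretely, $*$ is graded: $\Val_k*\Val_l\subset\Val_{k+l-n}$. Take $k+l=n$ with $1\le k\le n-1$, so the product is a multiple of $\chi$.
\begin{enumerate}
\item Your wedge formula gives $0$, since the sphere degree would be $(n-k)+(n-l)=n>\dim\P_+(V^*)$.
\item Also $C_1=C_2=0$.
\end{enumerate}
Hence your map vanishes identically on $\Val_k\times\Val_{n-k}$.

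This already fails for smooth valuations. Let $n=2$, let $D$ be the unit disc and $\mathrm{per}$ the perimeter. Use $\vol(\bullet+sD)*\vol(\bullet+tD)=\vol(\bullet+(s+t)D)$ (tensored with $\vol^*$) and $\vol(K+sD)=\vol(K)+s\,\mathrm{per}(K)+\pi s^2\chi(K)$. Comparing coefficients of $st$ gives $\mathrm{per}*\mathrm{per}=2\pi\chi\neq0$. So your Step~3 claim that the formula extends the convolution is false.

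\textbf{What is missing.} You need a continuous extension of the pairing $\Val^{-\infty}_k\times\Val^{-\infty}_{n-k}\to\C\chi$ under the transversality assumption.

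In the Bernig--Fu representation, the $\chi$-coefficient of the product is $\pi_*$ of the representing form. That is, it is an integral over $\P_+(V^*)$ of the $V$-degree-zero part of a form built from $\omega_1$ and $T_2$. Apart from terms involving the $C_i$ and the volume coefficients, it pairs a \emph{primitive} $\omega_1$ of $T_1$ with $T_2$. It is not a component of the intersection of $\bar T_1$ and $\bar T_2$.

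To handle generalized valuations, you would proceed as follows.
\begin{enumerate}
\item Choose a primitive $\beta_1$ of $T_1$ on the sphere that depends continuously on $T_1$ and satisfies $\WF(\beta_1)\subset\Gamma_1$. For instance, take $\beta_1=d^*G\,T_1$ with $G$ the Green operator of the Hodge Laplacian; this is pseudodifferential, hence does not enlarge wave front sets.
\item Check that the result does not depend on the choice of primitive:
\begin{itemize}
\item exact changes vanish by Stokes, since $T_2$ is closed;
\item the Rumin correction $\alpha\wedge\xi$ has positive $V$-degree, so it does not contribute;
\item the constant ambiguity in sphere degree $0$ pairs with $\pi_*T_2=\partial C_2=0$.
\end{itemize}
The same condition $\pi_*T_1=0$ guarantees that $T_1$ is exact when $k=1$.
\item Only then apply H\"ormander's product, followed by integration over the compact sphere.
\end{enumerate}
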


To motivate the next theorem, let us recall two results related to convolution of valuations. We denote by $\vol$ any choice of Lebesgue measure on $V$, and $\vol^*$ the dual Lebesgue measure on $V^*$, so that $\vol \otimes \vol^*$ is independent of the choices.
\begin{thm}[{\cite[Thm.~2]{AB17}}] \label{thm_ab_average}
Define a map 
\begin{align*}
    \tilde F:\mathcal V_\cpt^\infty(V) & \to \mathcal V^\infty(V)^{tr} \otimes \Dens(V^*), \\
    \zeta & \mapsto \int_V \zeta(\bullet-x) d\vol(x) \otimes \vol^*
\end{align*} 
and let 
\begin{displaymath}
F:\mathcal V_\cpt^\infty(V) \to \Val^\infty(V) \otimes \Dens(V^*)
\end{displaymath}
be the composition of $\tilde F$ with the natural isomorphism $\mathcal V^\infty(V)^{tr} \otimes \Dens(V^*) \cong \Val^\infty(V) \otimes \Dens(V^*)$. Then $F$ is a surjective homomorphism with respect to the convolution products on both sides. 
\end{thm}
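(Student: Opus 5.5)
The plan is to check three things in turn: that $F$ lands in the right space, that it is surjective, and that it is multiplicative. For each, I would work at the level of the defining forms $(\oldphi,\omega)$ from~\eqref{eq:def-val}.

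\textbf{Well-definedness.} Let $\zeta\in\mathcal V^\infty_\cpt(V)$ be represented by compactly supported forms $(\oldphi,\omega)\in \forms[V][\cpt]{n}\oplus\forms[\P_V][\cpt]{n-1}$. Write $t_x$ for translation by $x$, acting on $V$ and on $\P_V=V\times\P_+(V^*)$. Then $\zeta(\bullet-x)$ is represented by $(t_x^*\oldphi,t_x^*\omega)$, up to the sign convention in $t_x$. Integrating in $x$ against $\vol$ and using Fubini, $\tilde F(\zeta)$ is represented by the averaged forms $\bar\oldphi=\int_V t_x^*\oldphi\,dx$ and $\bar\omega=\int_V t_x^*\omega\,dx$. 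Compact support makes these integrals converge, and the resulting forms are smooth and translation-invariant. Hence $\tilde F(\zeta)\in\mathcal V^\infty(V)^{tr}\otimes\Dens(V^*)$, and the map is continuous and linear.

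\textbf{Surjectivity.} Let $\mu\in\Val^\infty$ be given by translation-invariant forms $(\oldphi_0,\omega_0)$. Choose $\rho\in C^\infty_c(V)$ with $\int\rho\,d\vol=1$ and set $\zeta=(\rho\,\oldphi_0,(\rho\circ\pi_V)\,\omega_0)$. Averaging over translations replaces $\rho$ by $\int\rho(\bullet-x)\,dx=1$, so $F(\zeta)=\mu\otimes\vol^*$.

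\textbf{Multiplicativity.} Recall that $\zeta_1*\zeta_2=a_*(\zeta_1\boxtimes\zeta_2)$ with $a(x,y)=x+y$. The intertwining identity I want is $a\circ(t_x\times t_y)=t_{x+y}\circ a$. Together with naturality of $\boxtimes$ and $a_*$ under diffeomorphisms, it gives
\[
\zeta_1(\bullet-x)*\zeta_2(\bullet-y)=(\zeta_1*\zeta_2)(\bullet-x-y).
\]
Now integrate over $(x,y)\in V\times V$ and substitute $z=x+y$. The left-hand side becomes ``$\tilde F(\zeta_1)*\tilde F(\zeta_2)$'', where the convolution is taken formally on non-compactly supported objects. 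The right-hand side becomes $\int_V dx\cdot\tilde F(\zeta_1*\zeta_2)$, which is divergent. The densities $\vol^*$ in the definition of $\tilde F$ are there exactly to absorb this extra factor of volume. This is the main obstacle: the convolution on $\Val^\infty\otimes\Dens(V^*)$ is not literally a pushforward of compactly supported objects, so the computation above must be made rigorous by a different route.

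\textbf{Handling the obstacle.} I would compare both sides on the spanning family $\vol(\bullet+A)$, with $A$ a smooth convex body of positive curvature, since this family characterizes the convolution on $\Val^\infty\otimes\Dens(V^*)$. For this family:
\begin{itemize}
\item take cut-off versions $\zeta_i=\rho_i\cdot\vol(\bullet+A_i)$, whose images under $F$ are $\vol(\bullet+A_i)\otimes\vol^*$ by the surjectivity computation;
\item compute $\zeta_1*\zeta_2$ explicitly through the pair of currents of $a_*$; by~\eqref{eq:T'-pushforward} this is $T'=\bar\tau_*\bar{da}^*T$;
\item use Fubini to show that its translation average equals $\vol(\bullet+A_1+A_2)\otimes\vol^*$.
\end{itemize}
The general identity $F(\zeta_1*\zeta_2)=F(\zeta_1)*F(\zeta_2)$ then follows by bilinearity and continuity of both convolutions, together with density of such $\zeta$ modulo $\ker F$. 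For the last point, I would show that $\ker F$ is an ideal for the convolution, using the same translation-averaging argument on the kernel.
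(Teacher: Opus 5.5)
The paper gives no proof of this statement; it quotes it from \cite{AB17}, so your argument has to stand on its own. Your well-definedness and surjectivity steps are fine.

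The gap is in multiplicativity. Your two bullets, ``compute $\zeta_1*\zeta_2$ through \eqref{eq:T'-pushforward}'' and ``use Fubini to show that its translation average equals $\vol(\bullet+A_1+A_2)\otimes\vol^*$'', are the entire content of the theorem, and you give no mechanism for either. The characterization $\vol(\bullet+A)*\vol(\bullet+B)=\vol(\bullet+A+B)$ is only usable if you can recognize an averaged current as the Legendrian cycle of $\vol(\bullet+A_1+A_2)$. That requires one of two things, and neither appears:
\begin{itemize}
\item an explicit current for $\vol(\bullet+A)$, for example via the parametrization $(x,[\xi])\mapsto x+\nabla h_A(\xi)$ of $\partial(K+A)$ with $h_A$ the support function, together with an algebraic identity; or
\item the explicit formula of \cite{bernig_fu06} for the convolution on $\Val^\infty\otimes\Dens(V^*)$ in terms of translation-invariant forms.
\end{itemize}
Your particular lifts do not make the computation explicit either. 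The Legendrian cycle of $\zeta_i$ is $s^*(D(\rho_i\omega_i)+\pi_V^*(\rho_i\oldphi_i))$. This is not $\rho_i$ times the cycle of $\vol(\bullet+A_i)$, because the Rumin operator $D$ is not $C^\infty(V)$-linear.

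There is a further gap: \eqref{eq:T'-pushforward} only gives $T'$, and $C'$ is left open in \cite{AB17}, as the paper notes. A translation-invariant valuation with vanishing Legendrian cycle is a multiple of $\chi$ (\Cref{lemma_t_equals_0}). So a computation at the level of $T'$ proves the identity only modulo $\C\chi$. The $\chi$-coefficient can be nonzero when the degrees add up to $n$, so it needs its own argument. One option: it is the value of $F(\zeta_1*\zeta_2)$ at a point, i.e.\ the total mass of $C'$. That mass is determined by $T'$, since $C'$ is compactly supported with $\partial C'=\pi_{V*}T'$.

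You can carry out the missing step directly, and doing so makes your special lifts and the density-modulo-$\ker F$ step unnecessary.
\begin{enumerate}
\item Near $(V\times V)\times_V\P_V=\{(x,y,[\xi:\xi])\}$, the blow-up map of \Cref{sec:exterior-product} (called $F$ there, not your averaging map) is a diffeomorphism.
\item The two mixed terms in \Cref{def_ext_prod} are supported on $\mathcal M_2$ and $\mathcal M_1$, which are disjoint from this submanifold.
\item Hence, for arbitrary $\zeta_i\in\mathcal V^\infty_c(V)$ with Legendrian cycles $T_i$, $\bar{da}^*T$ is just the pullback of $T_1\boxtimes T_2$ under $(x,y,[\xi])\mapsto((x,[\xi]),(y,[\xi]))$. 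So $T'$ at $(z,[\xi])$ is a fiber integral over $\{x+y=z\}$.
\item Averaging over $z$ and applying Fubini gives $\overline{T'}=\overline{T_1}\star\overline{T_2}$, for a fixed bilinear algebraic operation $\star$ on translation-invariant forms. Here $\overline{T'}$ is the translation average of $T'$, and $\overline{T_i}=\int_V(t_x)_*T_i\,dx$ is the Legendrian cycle of $F(\zeta_i)$, exactly as in the proof of \Cref{lemma_wavefront_of_average}.
\item What remains is to identify $\star$ with the convolution on $\Val^\infty\otimes\Dens(V^*)$, and to treat the $\chi$-part as above.
\end{enumerate}

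Two smaller points. First, if you prove that $\ker F$ is an ideal, average only one factor: $F(\zeta_1*\zeta_2)=\tilde F(\zeta_1)*\zeta_2$. This mixed convolution is defined because addition is proper on $V\times\supp\zeta_2$; averaging both factors diverges, as you noticed. Second, the $\vol(\bullet+A)$ do not span $\Val^\infty$. They only have dense span (Alesker's irreducibility theorem), so continuity of both products is essential there.
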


The polytope algebra~$\Pi(V)$ is the algebra generated by all symbols~$\hat P$, where~$P$ is a polytope, subject to the relations~$\hat P+\hat Q=\widehat{P \cup Q}+\widehat{P \cap Q}$ whenever~$P,Q,P \cup Q$ are polytopes, and~$\widehat{P+x}=\hat P$ for all~$x \in V$. The product is defined on generators by~$\hat P \cdot \hat Q:=\widehat{P+Q}$, see \cite{mcmullen_polytope_algebra}. Note that the indicator function of a polytope is a constructible function, hence it defines a generalized valuation~$[\mathbf 1_P]$. This is why we deviate from the standard notation which uses~$[P]$ instead of~$\hat P$.

\begin{thm}[{\cite[Thm.~2.5]{bernig_faifman16}}] \label{thm_bf_average}
The map on polytopes 
\begin{displaymath}
M(P):=[\mu \mapsto \mu(P)] \in \Val^{\infty}(V)^* \cong\Val^{-\infty}(V) \otimes \Dens(V^*)
\end{displaymath} 
extends to an injective map $M: \Pi(V) \to \Val^{-\infty}(V) \otimes \Dens(V^*)$. Moreover, if~$x,y \in \Pi(V)$ are in general position, then~$M(x)$ and~$M(y)$ are transverse and~$M(x \cdot y)=M(x) * M(y)$. 
\end{thm}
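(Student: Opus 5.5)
The approach is to make the map $M$ explicit through the current description of translation-invariant generalized valuations. Every step is then a statement about translation-invariant Legendrian cycles on $\P_V$, equivalently conical cycles on $\P_+(V^*)$ weighted by volumes.

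\emph{Step 1: $M(P)$ as a pair of currents.} For a polytope $P$, write $M(P)$ as a pair $(C,T)$. Here $C=\vol(P)\,\vol^*$-type constant times the integration current, and $T$ is the translation-invariant cycle determined by the normal cycle of $P$. Concretely, $T$ corresponds to $\sum_{F}\vol_{\dim F}(F)\,\intcur{\P_+(N(F,P))}$, the sum running over the faces $F$ of $P$ with their spherical normal cones. This follows from $\mu(P)=\int_P\oldphi+\int_{N(P)}\omega$ with translation-invariant $(\oldphi,\omega)$, integrating out the translation directions face by face.

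\emph{Step 2: well-definedness on $\Pi(V)$.} The relation $\widehat{P+x}=\hat P$ holds because smooth translation-invariant valuations are translation invariant. The relation $\hat P+\hat Q=\widehat{P\cup Q}+\widehat{P\cap Q}$ holds because each $\mu\in\Val^\infty$ is a valuation on convex bodies. This extends to the additive and multiplicative structure of $\Pi(V)$ by linearity and by Groemer's extension theorem.

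\emph{Step 3: injectivity.} Use McMullen's structure theory. We have $\Pi(V)=\Z\oplus\bigoplus_{r\ge1}\Xi_r$, and each $\Xi_r$ for $r\ge1$ is a $\Q$-vector space. An element of $\Xi_r$ is determined by its system of weights, namely the functions assigning to each $(n-r)$-dimensional cone of a fan the corresponding face volumes. Injectivity therefore reduces to recovering these weights from the current $T$ of Step 1. The currents $\intcur{\P_+(\cdot)}$ of distinct cones are linearly independent, so the volume coefficients of $T$ are exactly the weights, and injectivity follows.

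\emph{Step 4: transversality.} The current $T$ is a finite sum of integration currents over spherical polyhedral cones. Hence its wave front set lies in the union of the conormal bundles of the relatively open faces of the common refinement of the normal fans. Say $x,y$ are in general position when their normal fans are. Then no covector in $\WF(M(x))$ is antipodal to one in $\WF(M(y))$ over the same point, since antipodal conormals would force non-transverse faces. This is exactly \Cref{def_transversal_translation}.

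\emph{Step 5: multiplicativity $M(x\cdot y)=M(x)*M(y)$.} By bilinearity it suffices to treat $x=\hat P$ and $y=\hat Q$ with normal fans in general position. I would use approximation. Choose smooth bodies $P_\epsilon\to P$ and $Q_\epsilon\to Q$ with positive curvature, for instance $P+\epsilon B$ smoothed suitably. The product $M(P_\epsilon)*M(Q_\epsilon)=M(P_\epsilon+Q_\epsilon)$ holds by the characterizing property of convolution together with \Cref{thm_ab_average}. The conclusion would then follow by passing to the limit using the joint sequential continuity of \Cref{prop_continuity_gen_translationinv}.

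The main obstacle is that the approximating cycles must converge in the Hörmander topology with wave front sets staying in fixed closed cones $\Gamma_1,\Gamma_2$ satisfying $\Gamma_1\cap s(\Gamma_2)=\emptyset$. Smoothed bodies spread wave front set in all directions, so this fails in general. If it does, I would instead compute directly. The faces of $P+Q$ are the sums $F+G$ with normal cone $N(F,P)\cap N(G,Q)$, and by general position $\vol(F+G)=\vol(F)\vol(G)\,[F:G]$ with the mixed-index factor. On the other side, the convolution of translation-invariant generalized valuations is computed as an intersection of currents on $\P_+(V^*)$, with the $\Dens(V^*)$ factor producing the index. Matching these two face-by-face sums would prove the identity.
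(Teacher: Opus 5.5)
The paper does not prove this statement; it imports it from \cite{bernig_faifman16}, so your outline has to stand on its own.

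Steps 1--4 are a workable route. Well-definedness is immediate from the defining relations of $\Pi(V)$, and no extension theorem is needed. Injectivity correctly reduces, via the face-volume coefficients of $T$ on a common refinement of the fans, to McMullen's theorem that $\Pi(V)$ is separated by its weights (frame functionals). The polyhedral wave front bound then gives transversality. One correction in Step 4: bound $\WF(M(x))$ by the union of the conormal sets of the normal fans of the individual polytopes occurring in $x$, not by those of their common refinement. The refinement creates new low-dimensional cells with large conormal spaces, which needlessly strengthens your general position hypothesis.

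The genuine gap is Step 5, which is the substance of the theorem; neither of your two routes is actually carried out.

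\emph{Direct computation.} The partial convolution on $\Val^{-\infty}(V)\otimes\Dens(V^*)$ is only defined abstractly, as the jointly sequentially continuous extension in \Cref{prop_continuity_gen_translationinv}. Nothing you cite identifies it with an intersection of currents on $\P_+(V^*)$ in which the density factor produces $[F:G]$. To make the computation work you would need three things:
\begin{enumerate}
\item the formula for the convolution of smooth translation-invariant valuations in terms of their forms, namely a wedge product after a partial Hodge star in the $V$-variables, which is where the index really comes from;
\item a proof that the corresponding product of currents is jointly sequentially continuous on $\Val^{-\infty}_{\Gamma_1}\times\Val^{-\infty}_{\Gamma_2}$ when $\Gamma_1\cap s(\Gamma_2)=\emptyset$, so that it coincides with the extension;
\item only then, the face-by-face matching, including orientations.
\end{enumerate}

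\emph{Approximation.} Your reason for abandoning this route is misplaced. Smooth approximants have empty wave front set; what is required is uniform H\"ormander bounds in a fixed closed cone. That cone may be taken to be a closed conic neighborhood of the conormals of the fan, since transversality is an open condition. Moreover, only one factor needs approximating. Combine \Cref{prop_continuity_gen_translationinv} with one smooth argument, Hausdorff approximation of $Q$, the characterizing property, and the identification of $M(K)$ with $\mathrm{PD}(\vol(\bullet-K))$ up to the density factor (this, rather than \Cref{thm_ab_average}, is what you need). This gives $M(K)*M(Q)=M(K+Q)$ for every smooth positively curved $K$. Hence $M(P)*M(Q)=M(P+Q)$ would follow from smooth positively curved bodies $P_\epsilon$ with $M(P_\epsilon)\to M(P)$ in $\Val^{-\infty}_{\Gamma_1}(V)\otimes\Dens(V^*)$. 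Producing such $P_\epsilon$ with controlled H\"ormander seminorms is a genuine lemma; the obvious candidate $P+\epsilon K$ is not smooth.

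Your proposal supplies neither this approximation lemma nor the intersection formula, so multiplicativity remains unproved.
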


In the following we will simply write $\mu(\zeta)$ for the natural pairing between~$\mu \in \Val^\infty(V) \subset \mathcal V^\infty(V)$ and~$\zeta \in \mathcal V_\cpt^{-\infty}(V)$. We define the map 
\begin{align*}
M:\mathcal V^{-\infty}_\cpt(V) & \to \Val^{-\infty}(V) \otimes \Dens(V^*) \cong \Val^\infty(V)^*, \\
\zeta & \mapsto \left[\mu \mapsto \mu(\zeta), \mu \in \Val^\infty(V)\right].
\end{align*} 
Note that for polytopes, we have $M([\1_P])=M(P)$, where the right hand side refers to the map from Theorem \ref{thm_bf_average}, which justifies the use of the same letter. 

Next, we define a map 
\begin{align*}
\tilde M:\mathcal V^{-\infty}_\cpt(V) & \to \mathcal V^{-\infty}(V)^{tr} \otimes \Dens(V^*), \\
\zeta & \mapsto \int_V \zeta(\bullet-x) d\vol(x) \otimes \vol^*.
\end{align*} 
Here $\zeta(\bullet-x) \in \mathcal V_\cpt^{-\infty}(V)$ is defined as $(t_x)_*\zeta$ for the translation $t_x:V \to V$. 

\begin{lem}
The transpose $F^*$ of the map $F$ from Theorem \ref{thm_ab_average} is an isomorphism. The following diagram commutes 
\begin{equation*}
\begin{tikzcd}
    \mathcal V_\cpt^{-\infty}(V) \arrow[r,"M"] \arrow[rr,bend left=20,"\tilde M"] & \Val^{-\infty}(V) \otimes \Dens(V^*) \arrow[r,"F^*","\cong"'] & \mathcal V^{-\infty}(V)^{tr} \otimes \Dens(V^*)    \\
    \mathcal V_\cpt^{\infty}(V) \arrow[r,"F"] \arrow[rr,,bend right=20,"\tilde F"] \arrow[u,hook] & \Val^{\infty}(V) \otimes \Dens(V^*)  \arrow[r,"\cong"] \arrow[u,hook] & \mathcal V^\infty(V)^{tr} \otimes \Dens(V^*) \arrow[u,hook]                               
\end{tikzcd}.
\end{equation*}
\end{lem}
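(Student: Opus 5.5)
We have to prove three things: $F^*$ is an isomorphism, the diagram commutes (both triangles and both squares), and hence $\tilde M = F^*\circ M$ on $\mathcal V^{-\infty}_\cpt(V)$.

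\emph{Defining $F^*$ and proving it is an isomorphism.} Take $F^*$ to be the transpose of $F$ under the dualities
\begin{displaymath}
\Val^{-\infty}(V)\otimes\Dens(V^*) \cong \Val^\infty(V)^*, \qquad \mathcal V^{-\infty}(V)^{tr}\otimes \Dens(V^*) \cong \big(\mathcal V^\infty_\cpt(V)/\ker F\big)^*.
\end{displaymath}
Since $F$ is surjective by \Cref{thm_ab_average}, its transpose is injective. For surjectivity we must identify the translation-invariant generalized valuations with the functionals on $\mathcal V^\infty_\cpt(V)$ that factor through $F$. A functional $\zeta\in\mathcal V^{-\infty}(V)=\mathcal V^\infty_\cpt(V)^*$ is translation-invariant exactly when it vanishes on $\mu-(t_x)_*\mu$ for all $x$. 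I would show that $\ker F$ is the closed span of such differences: in currents, $\ker\tilde F$ consists of pairs of forms whose translation averages vanish, and an averaged form with zero average is a limit of sums of differences, by a standard de Rham-type argument on $V$. Granting this, invariant functionals factor through $\Val^\infty\otimes\Dens(V^*)$, and $F^*$ is an isomorphism. Continuity of $F$, which is needed to speak of the transpose, follows from the explicit integral formula.

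\emph{Commutativity.} The vertical inclusions are the Poincaré duality embeddings $\PD$. The bottom triangle is the definition of $F$. For the top triangle, evaluate both sides on $\mu\in\mathcal V^\infty_\cpt(V)$:
\begin{displaymath}
\langle F^*M\zeta,\mu\rangle=\langle M\zeta, F\mu\rangle=(F\mu)(\zeta)=\int_V \mu(\bullet-x)(\zeta)\,d\vol(x),
\end{displaymath}
while
\begin{displaymath}
\langle\tilde M\zeta,\mu\rangle=\int_V\langle(t_x)_*\zeta,\mu\rangle\,d\vol(x)=\int_V\langle\zeta,t_x^*\mu\rangle\,d\vol(x).
\end{displaymath}
The two integrands agree by the definition of pushforward by diffeomorphisms, up to the sign convention $x\mapsto -x$, which preserves Lebesgue measure. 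Exchanging integral and pairing is justified by writing $\zeta$ as a pair of compactly supported currents $(C,T)$ and $\mu$ as forms: the integral of translated forms is a smooth translation-invariant form, and this commutes with applying the compactly supported current.

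For the squares, one must check $F^*\circ\PD=\PD\circ(\text{iso})$ on $\Val^\infty\otimes\Dens(V^*)$. This amounts to the Alesker pairing of a translation-invariant smooth valuation with $F(\mu)$ agreeing with the $\mathcal V$-pairing $\int_V \nu\cdot\mu$. I would verify it on the dense set of $\nu=\vol(\bullet+A)\otimes\vol^*$ and then use continuity. The main obstacle is the surjectivity part of the first step, namely identifying $\ker F$. I would try to avoid it by instead constructing an explicit inverse, restricting translation-invariant currents $(C,T)$ to currents defining elements of $\Val^{-\infty}$ via the description after \Cref{prop_continuity_gen_translationinv}.
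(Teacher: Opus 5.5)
Your proof of $\tilde M=F^*\circ M$ (pair with $\mu\in\mathcal V^\infty_\cpt(V)$, move $t_x$ across the pairing, substitute $x\mapsto -x$) is the paper's computation, and your justification for exchanging integral and pairing is a useful addition. The gap is in the claim that $F^*$ is an isomorphism. The paper does not prove this; it quotes it, together with the right-hand square, from \cite[Prop.~2.5]{bernig_faifman16}.

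The step you single out is the entire content. By Hahn--Banach, every translation-invariant element of $\mathcal V^{-\infty}(V)$ vanishing on $\ker F$ is equivalent to $\ker F$ lying in the closed span of the differences $\mu-(t_x)_*\mu$. Your identification of the target with $(\mathcal V^\infty_\cpt(V)/\ker F)^*$ presupposes exactly this. The de Rham-type argument you sketch does not prove it:
\begin{itemize}
\item A valuation determines its representing forms $(\phi,\omega)$ only modulo a large kernel. So $\mu\in\ker F$ says that the averaged pair represents the zero valuation, not that the averaged forms vanish.
\item Suppose you pass instead to the canonical currents $(C,T)=(\pi_{V*}\omega,\,s^*(D\omega+\pi_V^*\phi))$. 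Then $\mu\in\ker F$ does mean that $C$ and $T$ have zero translation average, and writing $T=\sum_i L_{e_i}\eta_i$ with compactly supported $\eta_i$ is elementary. But you then need each $\eta_i$, with a suitable $C_i$, to be the pair of a compactly supported smooth valuation: closed, vertical, and with $\pi_{V*}\eta_i=\partial C_i$. The elementary decomposition does not give this.
\item So that a functional vanishing on $\ker F$ descends to a \emph{continuous} functional on $\Val^\infty(V)\otimes\Dens(V^*)$, you also need $F$ to be open. This requires an open mapping theorem from the LF-space $\mathcal V^\infty_\cpt(V)$ onto this Fr\'echet space.
\end{itemize}
The ``explicit inverse'' alternative is only named. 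As written, the isomorphism is not established. Either cite \cite[Prop.~2.5]{bernig_faifman16} as the paper does, or carry one of these arguments out in full.

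You also do not address the left-hand square, though it needs no separate work. Since $\tilde M$ restricts to $\tilde F$ on $\mathcal V^\infty_\cpt(V)$, the outer rectangle commutes. Together with the right-hand square and injectivity of $F^*$, this forces $M\circ\PD=\PD\circ F$, which is how the paper argues. Your density check on $\vol(\bullet+A)\otimes\vol^*$ could give both squares, since both reduce to $\int_V\nu\cdot\mu=\langle\PD(\nu),F(\mu)\rangle$. However, it needs a formula for the Alesker product of $\vol(\bullet+A)$ with a non-invariant $\mu\in\mathcal V^\infty_\cpt(V)$, of the type $K\mapsto\int_V\mu(K\cap(x-A))\,dx$, which you do not supply.
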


\proof
The statement about $F^*$, as well as the commutativity of the right hand square is proved in \cite[Prop.~2.5]{bernig_faifman16}. 

Let us show that $\tilde M=F^* \circ M$. For polytopes this was shown in \cite[Lemma 3.2]{bernig_faifman16} and the general case follows the same lines. Let~$\zeta \in \mathcal V^{-\infty}_\cpt(V)$ and~$\beta \in \mathcal V^\infty_\cpt(V)$. Then 
\begin{align*}
\langle F^* \circ M(\zeta),\beta\rangle & = \langle M(\zeta),F(\beta)\rangle\\
& = F(\beta)(\zeta)\\
& = \tilde F(\beta)(\zeta)\\
& = \int_V \langle \beta(\bullet-x),\zeta\rangle \d\!\vol(x) \otimes \vol^*\\
& = \int_V \langle \beta,\zeta(\bullet+x)\rangle \d\!\vol(x) \otimes \vol^*\\
& = \int_V \langle \beta,\zeta(\bullet-x)\rangle \d\!\vol(x) \otimes \vol^*\\
& = \langle \tilde M(\zeta),\beta\rangle.
\end{align*}

It is clear from the definitions that the restriction of $\tilde M$ to $\mathcal V_\cpt^\infty(V)$ equals~$\tilde F$, which implies the commutativity of the left hand square. 

\endproof

\begin{lem} \label{lemma_wavefront_of_average}
Let $(\Lambda_1,\Gamma_1)$ with $\Lambda_1 \subset T^*V \setminus \underline 0, \Gamma_1 \subset T^*\P_V \setminus \underline 0$. We set \begin{equation*}
\hat \Gamma:=\left\{([\xi],\eta'') \in T^*\P_+(V^*) : \exists \,\tilde x \in V \mbox{ such that } (0,\eta'') \in \Gamma_1|_{(\tilde x,[\xi])}\right\}.
\end{equation*}
Then, for $\zeta \in \mathcal V_{\cpt,\Lambda_1,\Gamma_1}^{-\infty}(V)$ one has $\WF(M(\zeta)) \subset \hat \Gamma$ and the map 
\begin{displaymath}
M: \mathcal V_{\cpt,\Lambda_1,\Gamma_1}^{-\infty}(V)  \to \Val_{\hat \Gamma}^{-\infty}(V)  \otimes \Dens(V^*)
\end{displaymath} 
is sequentially continuous. 
\end{lem}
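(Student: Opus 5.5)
The plan is to compute $M(\zeta)$ at the level of currents, using the identification $F^*\circ M=\tilde M$ from the previous lemma. Since $F^*$ is an isomorphism compatible with the representation by currents, it suffices to describe the pair of currents representing $\tilde M(\zeta)=\int_V (t_x)_*\zeta\, d\vol(x)\otimes\vol^*$. If $\zeta$ is represented by $(C,T)$, then $(t_x)_*\zeta$ is represented by $((t_x)_*C,(\bar t_x)_*T)$, where $\bar t_x$ is the induced translation on $\P_V=V\times\P_+(V^*)$. Averaging over $x$, the Legendrian part of $\tilde M(\zeta)$ becomes the translation-invariant current $\int_V(\bar t_x)_*T\,dx$. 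Writing $\rho:\P_V=V\times \P_+(V^*)\to\P_+(V^*)$ for the projection, this current is identified with the pullback $\rho^*(\rho_*T)$ by the trivial bundle $\rho$; the pushforward is defined because $T$ has compact support. Indeed, pairing with a compactly supported form $\omega$ gives $\int_V\langle T,\bar t_x^*\omega\rangle dx$, and Fubini turns this into $\langle T,\rho^*\rho_*\omega\rangle$ up to the sign fixed by the orientation conventions. Hence the translation-invariant generalized valuation $M(\zeta)$ is represented, on the $\P_+(V^*)$ side, by the current $\rho_*T$. For $C$ the average is just a multiple of $\intcur{V}$, which carries no wave front information.

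The wave front estimate then follows from the pushforward estimate~\eqref{eq:pushforward-WF} (\cite[Chap.~VI, Prop.~3.9]{guillemin77}) applied to the map $\rho$, which is proper on $\supp T$. A covector $\eta''$ at $[\xi]$ lies in $\rho_*\WF(T)$ only if there is $\tilde x$ with $(\tilde x,[\xi])\in\supp T$ such that $d\rho^*\eta''=(0,\eta'')\in\WF(T)\subset\Gamma_1$ (the zero-section part is excluded because $\eta''\neq0$). This gives exactly the set $\hat\Gamma$. Properness on the compact support also ensures that $\hat\Gamma$ is closed, provided $\Gamma_1$ is intersected with the region above a fixed compact set.

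For sequential continuity, the plan is to use that pushforward by a map proper on a fixed compact set is sequentially continuous in the H\"ormander topologies, from $\mathcal D_{\Gamma_1}$ to $\mathcal D_{\rho_*\Gamma_1}$, again by \cite{guillemin77} / \cite{brouder_dang_helein}. Convergence in $\mathcal V^{-\infty}_{\cpt,\Lambda_1,\Gamma_1}(V)$ involves supports in a fixed compact set, so this applies. One then transports the result back through the isomorphism $F^*$ and the identification of $\Val_{\hat\Gamma}^{-\infty}(V)\otimes\Dens(V^*)$ with translation-invariant Legendrian cycles whose wave front set lies in $\hat\Gamma$.

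The main obstacle is the first step: justifying rigorously that the averaged current equals $\rho^*\rho_*T$, and that the topology on $\Val^{-\infty}_{\hat\Gamma}$ is precisely the H\"ormander topology of the $\P_+(V^*)$-current $\rho_*T$. This is where the orientation and sign conventions, together with the isomorphism $F^*$, have to be checked carefully. Here I would follow the polytope computation in \cite[Lemma 3.2]{bernig_faifman16}.
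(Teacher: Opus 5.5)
There is a genuine gap in your first step, and it is not a matter of signs: the identity $\int_V(\bar t_x)_*T\,dx=\rho^*(\rho_*T)$ is false.

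\textbf{The identity fails.} The fibers of $\rho\colon\P_V\to\P_+(V^*)$ are $n$-dimensional, so $\rho^*\rho_*T$ is a $(2n-1)$-current on $\P_V$, whereas the average is an $(n-1)$-current. Dually, fiber integration of an $(n-1)$-form $\omega$ along $n$-dimensional fibers gives $\rho_*\omega=0$. On the other hand, $\int_V\bar t_x^*\omega\,dx=(\int_V g)\,\rho^*\beta\neq0$ for $\omega=g(x)\,\rho^*\beta$. Averaging over translations integrates the \emph{coefficients} of $\omega$ with respect to the constant frame $dx_J$; it is not fiber integration.

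\textbf{The object you analyse carries no wave front set.} $\rho_*T$ is an $(n-1)$-cycle on the connected $(n-1)$-manifold $\P_+(V^*)$ (for $n\geq2$). By the constancy theorem it is a constant multiple of $\intcur{\P_+(V^*)}$. So it only sees the degree-$0$ (Euler characteristic) part of $M(\zeta)$, and its wave front set is empty. A proof that really represented $M(\zeta)$ by $\rho_*T$ would give $\WF(M(\zeta))=\emptyset$, which is false already for polytopes. Your final set $\hat\Gamma$ comes out right only because you bounded by $\rho_*\WF(T)$, and that upper bound does not detect the error.

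\textbf{What is missing.} You need a correct description of $\bar T:=\int_V(\bar t_x)_*T\,dx$. The paper obtains it as $\bar\tau'_*(T\boxtimes\vol)$, where $\bar\tau'(x,[\xi],y)=(x+y,[\xi])$ and $\vol$ is regarded as a $0$-current on $V$. Then $T\boxtimes\vol$ is an $(n-1)$-current annihilating the ideal generated by the $dy^j$, and modulo this ideal $(\bar\tau')^*\omega$ agrees with $t_y^*\omega$. The paper then applies the wave front bounds for products with smooth currents and for pushforwards \cite[Props.~2.15 and~2.16]{brouder_dang_helein}. Since $(d\bar\tau')^*(\eta',\eta'')=(\eta',\eta'',\eta')$, these force $\eta'=0$ and give $\hat\Gamma$. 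Sequential continuity follows from the continuity of these two operations.

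\textbf{Repairing your own route.} Alternatively, you can work componentwise. For every multi-index $J$ with $|J|\leq n-1$ one has $\langle\bar T,g\,dx_J\wedge\rho^*\beta\rangle=(\int_V g)\,\langle T,dx_J\wedge\rho^*\beta\rangle$. Hence $\bar T$ is encoded by the family of currents $S_J\colon\beta\mapsto\langle T,dx_J\wedge\rho^*\beta\rangle$ on $\P_+(V^*)$, of degrees $n-1-|J|$. Each $S_J$ is the pushforward by $\rho$ of $T$ multiplied by the smooth form $dx_J$. By \eqref{eq:pushforward-WF}, $\WF(S_J)\subset\rho_*\WF(T)\subset\hat\Gamma$, and your continuity argument then applies to each component separately.
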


\proof 
To simplify the notation, we fix a Lebesgue measure on $V$ and use it to identify $\Dens(V^*) \cong \C$.
Let $(C_1,T_1)$ be the pair of currents corresponding to $\zeta$. In particular, we have~$\WF(T_1) \subset \Gamma_1$.  
Let $\bar \tau':\P_V \times V \to \P_V$ be the map $(x,[\xi],y) \mapsto (x+y,[\xi])$ and set 
\begin{displaymath}
    T:=(\bar \tau')_*(T_1 \boxtimes \vol).
\end{displaymath}	

Fix $(x,[\xi],y) \in \P_V \times V$ and $v \in T_{(x,[\xi])}\P_V$. Then $(v,0) \in T_{(x,[\xi],y)}(\P_V \times V)$ and $d\bar \tau'(v,0)=dt_{y}(v)$, where $t_y:\P_V \to \P_V$ is translation by $y$.

Let $\omega$ be a compactly supported $(n-1)$-form on $\P_V$. We can consider $t_y^*\omega$ as an $(n-1)$-form on $\P_V \times V$. More precisely, its value at the point $(x,[\xi],y)$ is given by $(t_y^*\omega)|_{(x,[\xi])} \in \wedge^{n-1}T^*_{(x,[\xi])}\P_V \subset \wedge^{n-1}T^*_{(x,[\xi],y)}(\P_V \times V)$. 

It follows from the above that $(\bar \tau')^* \omega|_{(x,[\xi],y)}=t_y^*\omega|_{(x,[\xi])}$ modulo the ideal generated by the forms $dy^j$ for~$j=1,\ldots,n$. Since $T_1 \boxtimes \vol$ vanishes on this ideal, we obtain  
\begin{displaymath}
    \langle T,\omega\rangle=\langle T_1 \boxtimes \vol,(\bar \tau')^* \omega\rangle=\left\langle T_1,\int_V t_y^* \omega dy\right\rangle=\left\langle \int_V (t_y)_* T_1 dy,\omega\right\rangle.
\end{displaymath}
Therefore, the current $T$ is the Legendrian cycle corresponding to the generalized valuation~$\tilde M(\zeta)=\int_V t_{y*}\zeta dy$. 

With $\vol$ being a smooth current, \cite[Prop.~2.16]{brouder_dang_helein} implies that  
\begin{displaymath}
\WF(T_1 \boxtimes \vol)|_{(x,[\xi],y)}=\{(\kappa,0): \kappa \in \WF(T_1)|_{(x,[\xi])}\} \subset \{(\kappa,0): \kappa \in \Gamma_1|_{(x,[\xi])}\}.
\end{displaymath}
We can write $\kappa=(\kappa',\kappa'')$ with $\kappa' \in T_x^*V$ and~$\kappa'' \in T_{[\xi]}^*\P_+(V^*)$. It follows from \cite[Prop.~2.15]{brouder_dang_helein} that
\begin{align*}
\WF(T)|_{(x,[\xi])} 
&\subset \Big\{(\eta',\eta'') \in T^*_{(x,[\xi])}\P_V: \exists\, \tilde x\in V, (\kappa',\kappa'') \in \Gamma_1|_{(\tilde x,[\xi])}, \\
    &\hspace{3cm} (d\bar \tau'|_{(\tilde x,[\xi],x-\tilde x)})^*(\eta',\eta'')=(\kappa',\kappa'',0)\Big\}.
\end{align*}

Since $(d\bar \tau'|_{(\tilde x,[\xi],x-\tilde x)})^*(\eta',\eta'')=(\eta',\eta'',\eta')$, the last set is precisely the image of $\hat \Gamma$ under the embedding $T^*\P_+(V^*) \setminus \underline 0 \hookrightarrow T^*\P_V \setminus \underline 0$ and hence the wave front set of $M(\zeta)$ is contained in $\hat \Gamma$. Since exterior product and pushforward of currents are sequentially continuous operations with respect to the corresponding H\"ormander topologies \cite[Props.~2.15 and~2.16]{brouder_dang_helein}, the sequential continuity of $M$ follows.
\endproof

The next theorem generalizes Theorems \ref{thm_ab_average} and \ref{thm_bf_average}. 

\begin{thm} \label{thm_compatibility_convolution_generalized}
If $\zeta_1, \zeta_2\in\mathcal{V}^{-\infty}_\cpt(V)$ are transverse, then $M(\zeta_1)$ and~$M(\zeta_2)$ are transverse as well and  
\begin{displaymath}
    M(\zeta_1 * \zeta_2)=M(\zeta_1) * M(\zeta_2).
\end{displaymath}
\end{thm}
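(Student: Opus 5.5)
We prove the two claims separately: first the transversality of $M(\zeta_1)$ and $M(\zeta_2)$ by comparing wave front sets, then the product formula by approximation with smooth valuations.

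\emph{Transversality.} Choose $(\Lambda_i,\Gamma_i)$ as in \Cref{def_transversal_compact} and let $\hat\Gamma_i$ be the sets attached to them in \Cref{lemma_wavefront_of_average}. That lemma gives $\WF(M(\zeta_i))\subset\hat\Gamma_i$. Since $\zeta_i$ has compact support, $\hat\Gamma_i$ is closed and conical: the point $\tilde x$ ranges over a compact set, so a limit of points of $\hat\Gamma_i$ is again witnessed by some $\tilde x$.

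Suppose some $([\xi],\eta'')$ lay in $\hat\Gamma_1\cap s(\hat\Gamma_2)$. Then there would be $\tilde x_1,\tilde x_2$ with $(0,\eta'')\in\Gamma_1|_{(\tilde x_1,[\xi])}$ and $(0,-\eta'')\in\Gamma_2|_{(\tilde x_2,[\xi])}$. Here $\eta:=(0,\eta'')$ lies in $T^*_{[\xi]}\P_+(V^*)\subset T^*_{(x_i,[\xi])}\P_V$. This is exactly the configuration excluded by \Cref{def_transversal_compact}. Hence $\hat\Gamma_1\cap s(\hat\Gamma_2)=\emptyset$, which is the transversality of \Cref{def_transversal_translation}.

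\emph{Identity.} We argue by density and continuity.

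1. By \cite{alesker_val_man4} and standard mollification, choose sequences $\zeta_i^{(k)}\in\mathcal V^\infty_\cpt(V)$ converging to $\zeta_i$ in the H\"ormander topology of $\mathcal V^{-\infty}_{\cpt,\Lambda_i,\Gamma_i}(V)$.
   - Concretely, convolve the currents $(C_i,T_i)$ with an approximate identity in the translation variable, cut off with a fixed compactly supported bump, and keep supports in a fixed compact set.
   - The wave front condition needs care: translation-mollification may not preserve the cone bound $\Gamma_i$, so we enlarge $\Gamma_i$ slightly to closed cones $\Gamma_i'$ that are still transverse. Transversality is an open condition on closed cones over the compact set $\supp\zeta_i$, so such $\Gamma_i'$ exist. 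The Hörmander convergence of mollifications is the standard one from \cite{brouder_dang_helein}.

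2. For smooth compactly supported valuations, \Cref{thm_ab_average} together with $M|_{\mathcal V^\infty_\cpt}=F$ (the commutative diagram above) gives
\[M(\zeta_1^{(k)}*\zeta_2^{(k)})=M(\zeta_1^{(k)})*M(\zeta_2^{(k)}).\]

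3. Pass to the limit on each side.
   - On the left, use joint sequential continuity of $*$ from \Cref{prop_transversal_compact_valuations}, with the enlarged cones $\Gamma_i'$, and then continuity of $M:\mathcal V^{-\infty}_\cpt(V)\to\Val^{-\infty}(V)\otimes\Dens(V^*)$. The latter is weak continuity: $M$ is restriction of a functional to $\Val^\infty\subset\mathcal V^\infty$, and the supports stay in a fixed compact set.
   - On the right, \Cref{lemma_wavefront_of_average} gives $M(\zeta_i^{(k)})\to M(\zeta_i)$ in $\Val^{-\infty}_{\hat\Gamma_i'}$. Then \Cref{prop_continuity_gen_translationinv} yields convergence to $M(\zeta_1)*M(\zeta_2)$.
   - Uniqueness of limits gives the identity.

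The main obstacle is step 1: producing smooth approximants that converge in the Hörmander topology with uniformly controlled wave front sets. The fallback is to approximate each factor separately, first $\zeta_1$ with $\zeta_2$ fixed and then $\zeta_2$. Here one uses separate rather than joint continuity. This needs the convolution of a smooth valuation with a generalized one, where $M$-compatibility follows by duality from the smooth case.
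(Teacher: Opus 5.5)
The overall architecture of your proof is the paper's. You get transversality of $M(\zeta_1)$ and $M(\zeta_2)$ through the cones $\hat\Gamma_i$ of \Cref{lemma_wavefront_of_average}. You then approximate by smooth valuations and use \Cref{thm_ab_average} for the identity in the smooth case. Finally you pass to the limit on both sides via \Cref{prop_transversal_compact_valuations}, \Cref{lemma_wavefront_of_average} and \Cref{prop_continuity_gen_translationinv}. The transversality argument and step~3 are fine. One small correction: closedness of $\hat\Gamma_i$ comes from $\Gamma_i$ lying over a compact set, not from $\zeta_i$ having compact support. You can arrange this by restricting $\Gamma_i$ to the part of $T^*\P_V$ over a compact neighbourhood of $\supp\zeta_i$.

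The gap is step~1, in the concrete form you give it. Averaging $(C_i,T_i)$ against an approximate identity in the translation variable smooths $T_i$ only along $V$ in $\P_V=V\times\P_+(V^*)$, never in the direction variable. Take $\zeta=[\1_S]$ with $S\subset\R^2$ a segment:
\begin{itemize}
\item The part of $N(S)$ over the relative interior of $S$ consists of the two conormal directions.
\item Its translation average is supported on $V\times\{\text{two points of }\P_+(V^*)\}$.
\item The endpoint pieces are vertical, so pairing the whole averaged normal cycle with $\phi(x)f([\xi])\,dx_1$ only sees the values of $f$ at those two points.
\end{itemize}
So the averaged current is not a smooth form. Your $\zeta_i^{(k)}$ therefore do not lie in $\mathcal V^\infty_\cpt(V)$, and step~2 cannot invoke \Cref{thm_ab_average}.

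Cutting off $C_i,T_i$ by a bump is also not legitimate, since it destroys the conditions of \Cref{prop:gval-as-cur}. It is unnecessary anyway, because convolution with a compactly supported kernel keeps supports compact.

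What you are missing is the approximation result the paper uses, \cite[Lemma~8.2]{AB09}. It gives $\zeta_i^{(k)}\in\mathcal V^\infty_\cpt(V)$ converging to $\zeta_i$ in $\mathcal V^{-\infty}_{\cpt,\Lambda_i,\Gamma_i}(V)$ with the original cones. With it you need neither the enlarged cones $\Gamma_i'$ nor the separate-continuity fallback, and your argument becomes exactly the paper's proof. If you want to build approximants by hand, you must average over a group acting transitively on $\P_V$ (for instance rigid motions), not over translations alone.
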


\proof 
Let $\Lambda_i \subset T^*V \setminus \underline 0$ and $\Gamma_i \subset T^*\P_V \setminus \underline 0$ be as in Definition \ref{def_transversal_compact}. Let~$\hat \Gamma_i \subset T^*\P_+(V^*) \setminus \underline 0$ be the corresponding sets defined in~\Cref{lemma_wavefront_of_average}. Clearly~$\hat \Gamma_1$ and~$\hat \Gamma_2$ satisfy the condition from Definition \ref{def_transversal_translation}. This shows that~$M(\zeta_1)$ and~$M(\zeta_2)$ are transverse.  

Using \cite[Lemma 8.2]{AB09} we can approximate $\zeta_j$ in $\mathcal V^{-\infty}_{\cpt,\Lambda_j,\Gamma_j}(V)$ by a sequence of smooth valuations $\zeta_j^i \in \mathcal V^\infty_\cpt(V)$ for~$j=1,2$.

By Lemma \ref{lemma_wavefront_of_average}, $M(\zeta_j^i)$ converges to $M(\zeta_j)$ in $\Val^{-\infty}_{\hat \Gamma_j}(V) \otimes \Dens(V^*)$ and hence \Cref{prop_continuity_gen_translationinv} implies that $M(\zeta_1^i) * M(\zeta_2^i)$ converges to $M(\zeta_1) * M(\zeta_2)$ in $\Val^{-\infty}(V) \otimes \Dens(V^*)$. 

On the other hand, $\zeta_1^i * \zeta_2^i$ converges to $\zeta_1 * \zeta_2$ in $\mathcal V_\cpt^{-\infty}(V)$ by \Cref{prop_transversal_compact_valuations}. \Cref{lemma_wavefront_of_average} (with $\Gamma_1=T^*\P_V \setminus \underline 0$) then implies that~$M(\zeta_1^i * \zeta_2^i)$ converges to~$M(\zeta_1 * \zeta_2)$ in~$\Val^{-\infty}(V) \otimes \Dens(V^*)$.  

Since $M(\zeta_1^i) * M(\zeta_2^i)=F(\zeta_1^i) * F(\zeta_2^i)=F(\zeta_1^i * \zeta_2^i)=M(\zeta_1^i * \zeta_2^i)$ by \Cref{thm_ab_average}, the proof is complete.
\endproof

\subsection{Constructible functions in general position}

The aim of this subsection is to prove the following result. 

\begin{prop} \label{prop_convolution_constructible_transitive}
Let $G \subset \mathrm{O}(n)$ be a Lie subgroup that acts transitively on the unit sphere and $\phi_1,\phi_2 \in \mathrm{CF}(\R^n)$ be compactly supported. Then for almost all $(g_1,g_2) \in G \times G$, the constructible functions $(g_1)_*\phi_1$ and $(g_2)_*\phi_2$ satisfy the assumptions of  Corollary \ref{cor:convolution-CFgval}. In particular, the generalized valuations $[(g_1)_*\phi_1]$ and $[(g_2)_*\phi_2]$ are transverse in the sense of \Cref{def_transversal_compact}. 
\end{prop}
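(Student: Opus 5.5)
Here $G=V=\R^n$ acts on $X=V$ by addition, $a(x_1,x_2)=x_1+x_2$, and the convolution is $\phi*\psi=a_*(\phi\boxtimes\psi)$. We must check assumption (ii) of \Cref{main:pushforward} for $a$ and $(g_1)_*\phi_1\boxtimes(g_2)_*\phi_2$ for almost all $(g_1,g_2)$. Properness of $a$ on the support holds because both supports are compact. The substantive condition is that $\bar{da}\colon (V\times V)\times_V\P_V\hookrightarrow \P_{V\times V}$ be angularly transverse to $N((g_1)_*\phi_1\boxtimes(g_2)_*\phi_2)$. Transversality of the two generalized valuations in the sense of \Cref{def_transversal_compact} should then follow from this, and I would verify it directly with the same stratifications.

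First I fix angular Whitney stratifications $\mathcal S_1,\mathcal S_2$ of $\P_V$ defining $N(\phi_1),N(\phi_2)$, which exist by \Cref{prop:WF-subanalytic}. Angularity is preserved under diffeomorphisms, so $(g_i)_*\mathcal S_i$ is angular for $N((g_i)_*\phi_i)$, where $g_i$ acts on $\P_V=V\times\P_+(V^*)$ through its induced action. By \Cref{prop:exterior-product-CFgval} the normal cycle of the exterior product is $F_*\Phi^*(T_1\boxtimes T_2)$ plus two terms involving $C_i$. Using \Cref{lemma_product_angular_stratifications} together with the proof of \Cref{lem:pushforward}, I would build an angular stratification of $N(\phi_1\boxtimes\phi_2)$ whose strata, near the submanifold $\{(x_1,x_2,[\xi:\xi])\}$, are images of products of strata of $\mathcal S_1,\mathcal S_2$. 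This submanifold lies away from the blown-up locus $\mathcal M$, so $F$ is a local diffeomorphism there and these pieces are the only relevant ones. Near it the boundary terms involving $C_i$ do not appear, because $\mathcal M$ consists of covectors of the form $[\xi_1:0]$ or $[0:\xi_2]$.

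The key step is the transversality computation, which mirrors the proof of \Cref{prop_transversal_compact_valuations}. Take $S^1\in\mathcal S_1$ and $S^2\in\mathcal S_2$ with projections $\Sigma^i\subset\P_+(V^*)$ to the sphere factor. The image stratum of $S^1\times S^2$ is transverse to $(V\times V)\times_V\P_V$ at a point $(x_1,x_2,[\xi:\xi])$ exactly when no nonzero conormal $(\rho_1,\rho_2)$ satisfies $\rho_i'=0$ and $\rho_1''=-\rho_2''$. Such a conormal would have to annihilate all $V$-directions, which forces the strata to project locally submersively onto $V$. So it suffices to require that, after applying the rotations, the maps from $g_1S^1$ and $g_2S^2$ to $\P_+(V^*)$ are transverse to each other as maps into the sphere. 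By parametric transversality (Sard's theorem applied to the submersion $G\times G\times S^1\times S^2\to \P_+(V^*)\times\P_+(V^*)$, submersive because $G$ acts transitively on the sphere), this holds for almost every $(g_1,g_2)$, for each pair of strata. Since the stratifications are locally finite and the supports compact, only finitely many pairs occur, so the union of the exceptional sets is still null. Transitivity is essential here: it makes $(g_1,g_2,[\xi_1],[\xi_2])\mapsto(g_1[\xi_1],g_2[\xi_2])$ submersive.

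The main obstacle is the middle step. One has to make precise that the angular stratification of $N$ of the exterior product near the fiber product is the product one, and that transversality of a product stratum reduces to transversality of the sphere-projections of the individual strata. This needs some care when a stratum's projection to $\P_+(V^*)$ is not a submersion onto its image. I would handle that by refining $\mathcal S_i$, keeping it angular, so that each stratum maps with constant rank onto a stratum of $\P_+(V^*)$, and then applying the transversality argument to the image strata.
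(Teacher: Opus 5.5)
Your proposal is correct and follows essentially the paper's argument. Both use the product angular stratification on $\P_{V\times V}\setminus\mathcal M$, the conormal criterion $\rho_i'=0$, $\rho_1''=-\rho_2''$ from \Cref{prop_transversal_compact_valuations}, parametric transversality driven by the submersion $g\mapsto gv$, and angularity to pass from strata to wave front sets for \Cref{def_transversal_compact}. The only difference is bookkeeping: the paper applies Thom's theorem to $\Xi\colon\hat S\times G\times G\to\P_{V\times V}$ against $(V\times V)\times_V\P_V$, while you first reduce to transversality of the sphere projections of the rotated strata, which is the same computation. Two slips, neither affecting the argument: the aside that such a conormal ``forces the strata to project locally submersively onto $V$'' is false (a point stratum has such conormals) but is never used, and the constant-rank refinement is unnecessary, since transversality of the two maps into $\P_+(V^*)$ (equivalently, of the product map to the diagonal) needs no rank hypothesis.
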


\proof
We want to show that for almost all $(g_1,g_2)$ the normal cycle of $N((g_1)_*\phi_1 \boxtimes (g_2)_*\phi_2)$ is transversal to the differential of the addition map, i.e. to the set $X \times_Y \P_Y$, where $X=\R^n \times \R^n$ and $Y=\R^n$. Note that $X \times_Y \P_Y \subset \P_X \setminus (\mathcal M_1 \cup \mathcal M_2)$ consists of all tuples~$(x_1,x_2,[\eta:\eta]) \in \P_X$ with~$x_1,x_2 \in \R^n$ and~$\eta \in T_{x_1+x_2}^*\R^n \setminus \{0\}$.

First we construct an angular stratification of the normal cycle. Take angular stratifications of $N(\phi_1)$ and $N(\phi_2)$. Then a stratification of the normal cycle~$N(\phi_1 \boxtimes \phi_2)$ is given by all $\hat S$ of the form 
\begin{equation} \label{eq_stratum_hatS}
\hat S:=\{(x_1,x_2,[\xi_1:\lambda \xi_2]) : (x_i,[\xi_i]) \in S_i \mbox{ for } i=1,2, \lambda \in \R_+\},
\end{equation}  
where $S_i$ is a stratum of $N(\phi_i)$. 

Similarly, a stratification of $N((g_1)_*\phi_1 \boxtimes (g_2)_*\phi_2)$ is given by all strata of the form
\begin{equation} \label{eq_stratum_ext_prod}
\{(x_1,x_2,[\xi_1:\lambda \xi_2]), (x_i,[\xi_i]) \in (g_i,g_i)S_i \mbox{ for } i=1,2, \lambda \in \R_+\},
\end{equation}  

Using the Euclidean metric, we can identify~$\P_Y$ with~$\R^n \times S^{n-1}$, by sending~$(x,v) \in \R^n \times S^{n-1}$ to~$(x,[\xi])$ where~$\xi=\langle v,\bullet\rangle$. We can identify~$\P_X \setminus (\mathcal M_1 \cup \mathcal M_2)$ with~$\R^n \times \R^n \times S^{n-1} \times S^{n-1} \times \left(0,\frac{\pi}{2}\right)$ by mapping~$(x_1,x_2,v_1,v_2,\phi)$ to~$(x_1,x_2,[\cos(\phi) \xi_1:\sin(\phi)\xi_2])\in\P_X \setminus (\mathcal M_1 \cup \mathcal M_2)$, where~$\xi_i=\langle v_i,\bullet\rangle$. 

Up to this identification, our stratification is the product of the angular stratification of $\phi_1$, the one of $\phi_2$ and the obvious stratification of the open interval $\left(0,\frac{\pi}{2}\right)$, hence it is angular by Lemma \ref{lemma_product_angular_stratifications}. Strictly speaking, since the interval is open, we only obtain that the stratification is angular outside any open neighborhood of $\mathcal M_1 \cup \mathcal M_2$, which is clearly sufficient for our purpose.

Fix $S_1,S_2$ and $\hat S$ as above. We claim that the map 
\begin{align*}
\Xi: \hat S \times G \times G &\to \P_X\\
    (x_1,x_2,[\xi_1:\lambda \xi_2],g_1,g_2) &\mapsto (g_1x_1,g_2x_2,[g_1\xi_1:\lambda g_2\xi_2])
\end{align*}
is transversal to $X \times_Y \P_Y$. 

For this, we use our identifications and consider a point
\begin{equation*}
    p=(x_1,x_2,v_1,v_2,\phi,g_1,g_2) \in \Xi^{-1}(X \times_Y \P_Y)
\end{equation*}
and let~$q=\Xi(p)=(g_1x_1,g_2x_2,v,v,\phi)$ with~$v=g_1v_1=g_2v_2$. Then~$T_{x_1}\R^n$ and~$T_{x_2}\R^n$ are contained in~$T_q (X \times_Y \P_Y)$. By definition of~$\hat S$, the subspace~$T_\phi \left(0,\frac{\pi}{2}\right)$ belongs to the image of~$d\Xi|_p$. By our assumption on~$G$, the map~$g \in G \mapsto gv\in S^{n-1}$ is a submersion. Therefore the image of~$d\Xi|_p$ contains a vector of the form~$(*,0,u,0,0)$ and a vector of the form~$(0,*,0,u,0)$ for each~$u \in T_vS^{n-1}$. This proves the claim. 

By Thom's transversality theorem \cite{GP10}, it follows that for almost all $(g_1,g_2)$ the image of $\Xi(\bullet,g_1,g_2)$ is transversal to $X \times_Y \P_Y$. This image is precisely the set \eqref{eq_stratum_ext_prod}. Doing this for all strata, we obtain that for almost all $(g_1,g_2)$, all strata of $N((g_1)_*\phi_1 \boxtimes (g_2)_* \phi_2)$ are transversal to $X \times_Y \P_Y$.

Given such a pair~$(g_1,g_2)$, we set~$\psi_i:=(g_i)_*\phi_i \in \CF[\cpt][\R^n]$. Then~$\psi_1$ and~$\psi_2$ satisfy the assumption of Corollary \ref{cor:convolution-CFgval}. Let us show that $\psi_1$ and $\psi_2$ are also transversal in the sense of Definition \ref{def_transversal_compact}.

Let~$\Gamma_i$ be the wave front set of~$N(\psi_i)$. Suppose that there are~$(x_1,[\xi],\eta) \in \Gamma_1$ and~$(x_2,[\xi],-\eta) \in \Gamma_2$ with~$\eta \in T^*_{[\xi]} \P_+(V^*) \subset T^*_{(x_i,[\xi])} \P_V$. Let~$S_i$ be the stratum of~$N(\psi_i)$ that contains~$(x_i,[\xi])$. By angularity of the stratifications, $\eta$ vanishes on~$T_{(x_i,[\xi])} S_i$. The stratum of~$N(\psi_1 \boxtimes \psi_2)$ that contains~$(x_1,x_2,[\xi:\xi])$ is~$\hat S$ from \eqref{eq_stratum_hatS}. Since~$(\eta,-\eta)$ vanishes on~$T_{(x_1,x_2,[\xi:\xi])} X \times_Y \P_Y$ and on~$T_{(x_1,x_2,[\xi:\xi])} \hat S$, we get a contradiction to the transversality of~$X \times_Y \P_Y$ and~$\hat S$.
\endproof

\begin{rk} 	
It is clear that the kernel of the map $M$ contains~$\phi-t_x^*\phi$ for all~$x \in V$. In view of the results on the polytope algebra mentioned above, it is tempting to conjecture that the kernel of $M$ is generated by such elements. We could then consider the algebra of constructible functions modulo translations and inject it into $\Val^{-\infty}(V) \otimes \Dens(V^*)$. 

However, the conjecture is not true, as the following easy example shows. Take $n=2$ and let $\phi_1=2 \cdot \mathbf{1}_{S^1}$ be twice the indicator function of the unit circle and $\phi_2= \mathbf{1}_{S^1_2}$ the indicator function of the circle with radius $2$. If $\mu$ is a smooth translation invariant valuation on $V$, then we can decompose it into homogeneous components $\mu=\mu_0+\mu_1+\mu_2$, with $\mu_0$ a multiple of the Euler characteristic and $\mu_2$ a multiple of the Lebesgue measure. Clearly Euler characteristic and Lebesgue measure both vanish on $\phi_1$ and $\phi_2$. Since~$\mu_1$ is $1$-homogeneous, we have $\mu_1(\phi_2)=2\mu_1(S^1)=\mu_1(\phi_1)$. Hence $\phi_1-\phi_2$ is in the kernel of $M$. On the other hand, $\phi_1-\phi_2$ can not be written as a finite sum of functions of the form $\phi-t_x^*\phi$, since the curvature of each arc of $S^1$ is $1$, while the curvature of each arc of $S^1_2$ is $\frac12$. 
\end{rk}

\subsection{Additive kinematic formulas}

In this section, we assume that~$G$ is a closed subgroup of $\mathrm O(n)$ that acts transitively on the unit sphere. Using the Lebesgue measure of $\R^n$ we can identify $\Dens(V) \cong \C$. The connected groups acting effectively and transitively on the unit sphere were classified by Montgomery-Samelson and Borel \cite{borel49, montgomery_samelson43}: 
\begin{gather*}
\mathrm{SO}(n),\ \mathrm{U}(n/2),\ \mathrm{SU}(n/2),\ \mathrm{Sp}(n/4),\ \mathrm{Sp}(n/4) \cdot \mathrm{U}(1),\\
    \mathrm{Sp}(n/4) \cdot \mathrm{Sp}(1),\ \mathrm{G}_2,\ \mathrm{Spin}(7),\ \mathrm{Spin}(9).
\end{gather*}
It was observed by Alesker \cite{alesker_survey07} that the vector space $\Val^G$ of continuous, translation-invariant and $G$-invariant valuations on $V$ is finite-dimensional and that all such valuations are smooth. 

Given a basis $\mu_1,\ldots,\mu_N$ of $\Val^G$, there are additive kinematic formulas:
\begin{equation} \label{eq_additive_kinematic_convex}
\int_G \mu_i(K+gL)dg=\sum_{k,l}c_{k,l}^i \mu_k(K)\mu_l(L), \quad \mbox{ for all } K,L\in\mathcal K^n.
\end{equation}

It is a difficult problem to determine such a basis and to compute the constants $c_{k,l}^i$ explicitly. In the case $G=\mathrm O(n)$ this can be achieved using Hadwiger's theorem and the template method (i.e. computing the constants by plugging in enough different examples). For the other known cases from the list above, in particular for $G=\mathrm U(n/2)$, the following link between the additive kinematic formulas and the convolution product was the key ingredient. 

Let us define a map $a:\Val^G \to \Val^G \otimes \Val^G$ by $a(\mu_i)=\sum_{k,l} c_{k,l}^i \mu_k \otimes \mu_l$. It is easily checked that this map does not depend on the choice of a basis, as we have $a(\mu)(K,L)=\int_G \mu(K +gL)dg$.

It was shown in \cite{bernig_fu06} that the restriction of $\PD$ to $G$-invariant valuations, denoted by $\PD^G$, is an isomorphism and that the following diagram commutes:
\begin{equation} \label{eq_ftaig}
\begin{tikzcd}
    \Val^G \arrow[r, "a"] \arrow[d, "\PD^G"] & \Val^G \otimes \Val^G  \arrow[d, "\PD^G \otimes \PD^G"] \\
    \Val^{G*}  \arrow[r, "c^*"] & \Val^{G*}  \otimes \Val^{G*}                                
\end{tikzcd}
\end{equation}
where $c^*$ denotes the adjoint of the convolution product $c:\Val^G \otimes \Val^G \to \Val^G$.

The aim of this section is to prove a version of these formulas for compactly supported constructible functions. Note that we can evaluate a valuation~$\mu \in \Val^G$ on a compactly supported constructible function by Example \ref{ex_evaluate_on_constructible}.   

\begin{thm}[{see~\Cref{main:additive-kinematic-formulas}}]
Let $G \subset \mathrm O(n)$ be a closed subgroup that acts transitively on the sphere. 
The formula~\eqref{eq_additive_kinematic_convex} holds for~$\phi_1,\phi_2 \in \mathrm{CF}_\cpt(\R^n)$ with the same constants: 
\begin{equation*}
    \int_G \mu_i(\phi_1 * g_*\phi_2) dg= \sum_{k,l} c_{k,l}^i \mu_k(\phi_1) \mu_l(\phi_2).
\end{equation*}
\end{thm}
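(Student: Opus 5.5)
The plan is to move everything to translation-invariant generalized valuations via the map $M$, and then use the duality \eqref{eq_ftaig} between the additive kinematic operator $a$ and the convolution on $\Val^G$.

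\textbf{Step 1: reduce to generic pairs and identify the integrand.} Fix $\phi_1,\phi_2\in\CF[\cpt][\R^n]$. By \Cref{prop_convolution_constructible_transitive}, for almost all $(g_1,g_2)\in G\times G$ the functions $(g_1)_*\phi_1$ and $(g_2)_*\phi_2$ satisfy the assumptions of \Cref{cor:convolution-CFgval}, and their generalized valuations are transverse in the sense of \Cref{def_transversal_compact}. Each $\mu\in\Val^G$ is $G$-invariant and translation-invariant. Hence
\begin{equation*}
\mu\big((g_1)_*\phi_1 * (g_2)_*\phi_2\big)=\mu\big((g_1)_*(\phi_1 * g_1^{-1}g_2\,{}_*\phi_2)\big)=\mu(\phi_1 * g_*\phi_2), \qquad g=g_1^{-1}g_2 .
\end{equation*}
Here I use that pushforward by the linear map $g_1$ commutes with convolution under addition. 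Since the Haar measure is invariant, it follows that for almost every $g\in G$ the pair $(\phi_1,g_*\phi_2)$ is transverse. For such $g$:
\begin{enumerate}
\item \Cref{cor:convolution-CFgval} gives $\emCFgval[\phi_1 * g_*\phi_2]=\emCFgval[\phi_1]*\emCFgval[g_*\phi_2]$;
\item \Cref{thm_compatibility_convolution_generalized} then gives
\begin{equation*}
\mu_i(\phi_1 * g_*\phi_2)=\big\langle M(\emCFgval[\phi_1])*M(\emCFgval[g_*\phi_2]),\mu_i\big\rangle .
\end{equation*}
\end{enumerate}
In particular the integrand is defined almost everywhere.

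\textbf{Step 2: average over $G$.} Write $m_j:=M(\emCFgval[\phi_j])\in\Val^{-\infty}(V)$, which is possible since $\Dens\cong\C$. Because $\mu_i$ is $G$-invariant, only the $G$-average of $m_j$ matters. Concretely, project onto $\Val^{G*}$: restricting to $\Val^G$, each $m_j$ defines a functional $\mu\mapsto\mu(\phi_j)$ in $\Val^{G*}$, and $M(\emCFgval[g_*\phi_2])$ restricts to the same functional as $m_2$.

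The core identity I will aim for is
\begin{equation*}
\int_G \big\langle m_1 * g_*m_2,\mu\big\rangle\,dg=\big\langle c^*(\mu|_{\Val^G})\,,\,m_1|_{\Val^G}\otimes m_2|_{\Val^G}\big\rangle ,
\end{equation*}
phrased through $\PD^G$. Combined with the commutative diagram \eqref{eq_ftaig}, this identity turns the right-hand side into $\sum_{k,l}c^i_{k,l}\mu_k(\phi_1)\mu_l(\phi_2)$.

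\textbf{Step 3: prove the core identity by approximation.}
\begin{itemize}
\item First, the identity holds when $m_1,m_2$ are replaced by convex bodies $K,L$ with smooth positively curved boundary. There it is exactly \eqref{eq_additive_kinematic_convex} together with $M(K)*M(gL)=M(K+gL)$, which follows from the characterization of convolution (or from \Cref{thm_ab_average}).
\item By linearity it then holds for smooth compactly supported valuations, whose $F$-images are limits of such combinations.
\item Finally, approximate $\emCFgval[\phi_j]$ by smooth $\zeta_j^k$ in $\mathcal V^{-\infty}_{\cpt,\Lambda_j,\Gamma_j}$, as in the proof of \Cref{thm_compatibility_convolution_generalized}. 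For each transverse $g$, sequential continuity (\Cref{lemma_wavefront_of_average}, \Cref{prop_continuity_gen_translationinv}) gives pointwise convergence of the integrands.
\end{itemize}

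\textbf{Main obstacle.} The hard step is the last limit: exchanging the limit with $\int_G$ needs domination. The wave front sets of $g_*$-translates vary with $g$, so uniformity fails near the non-transverse set of measure zero.

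My first attempt would avoid pointwise limits entirely. I would show that the averaged map $m_2\mapsto\int_G g_*m_2\,dg$ lands in $G$-invariant valuations. Those are smooth, as are all elements of $\Val^G$ (Alesker), and it depends only on $m_2|_{\Val^G}$. Then the convolution with a smooth valuation is defined for every $m_1$ without any transversality. The order "integrate, then convolve" would be justified by Fubini on the current level, since convolution with a smooth translation-invariant valuation is given by pairing against smooth forms. Where that is not available, I would fall back on dominated convergence, using boundedness of $\mu_i(\phi_1*g_*\phi_2)$, which is a bounded constructible quantity.
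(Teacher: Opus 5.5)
Your Steps 1--2, together with the ``integrate, then convolve'' plan in your last paragraph, are the paper's proof. The paper keeps the double integral $\int_{G\times G}\mu_i((g_1)_*\phi_1*(g_2)_*\phi_2)\,dg_1\,dg_2$. Your reduction to almost every single $g$ is correct, by equivariance of addition under the diagonal action, but it is not needed. The paper then applies \Cref{cor:convolution-CFgval}, \Cref{thm_compatibility_convolution_generalized} and \Cref{prop_convolution_constructible_transitive} for almost all $(g_1,g_2)$. Next it moves both Haar integrals inside the convolution to get $\langle\mu_i,\int_G M((g_1)_*\phi_1)\,dg_1*\int_G M((g_2)_*\phi_2)\,dg_2\rangle$, which is a convolution of two $G$-invariant, hence smooth, valuations. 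It concludes with $c^*\circ\PD^G$, the diagram \eqref{eq_ftaig}, and the $(G\times G)$-invariance of $a(\mu_i)$. The paper performs that interchange of the Haar integrals with the convolution in a single line and gives no separate justification. So the step you single out as the hard one is exactly the one the paper does not spell out.

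As written, however, neither of your ways of closing that step works.

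\textbf{Step 3 and the fallback.} Sequential continuity in the H\"ormander topologies (\Cref{lemma_wavefront_of_average}, \Cref{prop_continuity_gen_translationinv}) gives only pointwise convergence for each transverse $g$. Your fallback does not supply domination. Boundedness of $g\mapsto\mu_i(\phi_1*g_*\phi_2)$ bounds the limit, but dominated convergence needs a bound on $\langle M(\zeta_1^k)*g_*M(\zeta_2^k),\mu_i\rangle$ that is uniform in both $k$ and $g$. Nothing in the continuity statements is uniform as $g$ approaches the non-transverse set, where $\WF(g_*m_2)$ comes arbitrarily close to $s(\WF(m_1))$.

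\textbf{The Fubini justification.} The fact that convolution with a smooth valuation is a pairing against smooth forms shows that $\langle m_1*\bar m_2,\mu\rangle$ is defined and continuous in $m_1$. It does not show that this equals $\int_G\langle m_1*g_*m_2,\mu\rangle\,dg$. Every integrand convolves two singular valuations, and the family $\{g_*m_2\}_{g\in G}$ lies in no single wave-front class on which $m_1*(\cdot)$ is continuous.

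The interchange therefore has to be proved as a separate lemma. One option is slicing. Realize $g\mapsto m_1*g_*m_2$ (or already $g\mapsto[\phi_1]*[g_*\phi_2]$) as the slices over $G$ of a single current, obtained by pulling back along the submersion given by the $G$-action. Then use the slicing identity for locally normal currents, and check that almost every slice agrees with the wave-front-defined convolution. With that lemma in hand, your Step 2 combined with \eqref{eq_ftaig} goes through exactly as in the paper.
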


\proof 
Let $\phi_1,\phi_2 \in \mathrm{CF}_\cpt(\R^n)$ and $\mu_i \in \Val^G$. Then by $G$-invariance of~$\mu_i$,
{
\begin{align*}
\int_G &\mu_i(\phi_1 * g_*\phi_2) dg = 	\int_{G \times G} \mu_i((g_1)_*\phi_1 * (g_2)_*\phi_2) dg_1 dg_2 \\[0.5em]
& = \int_{G \times G} \left\langle \mu_i, M((g_1)_*\phi_1 * (g_2)_*\phi_2)\right\rangle dg_1dg_2\\[0.5em]
& = \int_{G \times G} \left\langle \mu_i, M((g_1)_*\phi_1) * M((g_2)_*\phi_2)\right\rangle dg_1dg_2  \quad \text{(Cor.~\ref{cor:convolution-CFgval}, Thm.~\ref{thm_compatibility_convolution_generalized},}\\[-0.5em]
    &\hspace{10cm} \text{Prop.~\ref{prop_convolution_constructible_transitive}})\\
& = \left\langle \mu_i, \int_G M((g_1)_*\phi_1)dg_1 * \int_G M((g_2)_*\phi_2) dg_2\right\rangle \\[0.5em]
    & = \left\langle c^* \circ \PD^G(\mu_i), \int_G M((g_1)_*\phi_1)dg_1 \otimes \int_G M((g_2)_*\phi_2) dg_2\right\rangle \\[0.5em]
&\hspace{-0.4em}\overset{\eqref{eq_ftaig}}{=} \left\langle (\PD^G \otimes \PD^G) \circ a(\mu_i), \int_G M((g_1)_*\phi_1)dg_1 \otimes \int_G M((g_2)_*\phi_2) dg_2\right\rangle \\[0.5em]	
& = a(\mu_i)(\phi_1 \otimes \phi_2) \quad \text{ (by $(G \times G)$-invariance of $a(\mu_i)$)}\\[0.5em]
& = \sum_{k,l} c_{k,l}^i \mu_k(\phi_1) \mu_l(\phi_2).
\end{align*}
}
\endproof

\subsection{Multiplicative kinematic formula on $S^3$}
In this section, we consider the case of the action of~$G=\mathrm{SO}(4)$ on the 3-sphere.

\begin{prop} \label{prop_transversality_sphere}
Let $\phi_1,\phi_2 \in \mathrm{CF}(S^3)$. Then for almost all $(g_1,g_2) \in G \times G$, the generalized valuations $(g_1)_* \phi_1$ and $(g_2)_*\phi_2$ satisfy the assumptions of \Cref{cor:convolution-CFgval}. 
\end{prop}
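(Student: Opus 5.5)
We follow the proof of \Cref{prop_convolution_constructible_transitive}, replacing the translation action by the left action of $S^3$ on itself and using $\mathrm{SO}(4)$ to move the two functions independently. The setting is the convolution on the Lie group $S^3$, i.e.\ the pushforward by the multiplication $a:S^3\times S^3\to S^3$ of the exterior product. Properness is automatic since $S^3$ is compact. What remains to check is assumption (ii) of \Cref{main:pushforward}: the map $\bar{da}:(S^3\times S^3)\times_{S^3}\P_{S^3}\hookrightarrow\P_{S^3\times S^3}$ must be transverse to the strata of an angular Whitney stratification of $N((g_1)_*\phi_1\boxtimes(g_2)_*\phi_2)$.

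First, by \Cref{prop:WF-subanalytic} we fix angular Whitney stratifications of $N(\phi_1)$ and $N(\phi_2)$. Trivializing $T^*S^3$ by left-invariant forms identifies $\P_{S^3}\cong S^3\times S^2$. Away from $\mathcal M_1\cup\mathcal M_2$ we then have
\[
\P_{S^3\times S^3}\setminus(\mathcal M_1\cup\mathcal M_2)\cong S^3\times S^3\times S^2\times S^2\times(0,\tfrac{\pi}{2}).
\]
As in \eqref{eq_stratum_hatS}, the sets $\hat S$ built from strata $S_1,S_2$ give a stratification of $N(\phi_1\boxtimes\phi_2)$. By \Cref{lemma_product_angular_stratifications} it is angular outside a neighborhood of $\mathcal M_1\cup\mathcal M_2$. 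The image of $\bar{da}$ avoids $\mathcal M_1\cup\mathcal M_2$: a covector $\eta$ at $xy$ pulls back to $(R_y^*\eta, L_x^*\eta)$, and both components are nonzero. The same holds after applying $(g_1,g_2)\in\mathrm{SO}(4)^2$, because $g_i$ acts on $\P_{S^3}$ by a contact diffeomorphism, which preserves angularity.

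Next, for each pair of strata we consider the map
\[
\Xi:\hat S\times G\times G\to\P_{S^3\times S^3},\quad (x_1,x_2,[\xi_1:\lambda\xi_2],g_1,g_2)\mapsto (g_1x_1,g_2x_2,[g_1\xi_1:\lambda g_2\xi_2]).
\]
We will show $\Xi$ is a submersion, hence transverse to the image $W$ of $\bar{da}$. The key input is that $\mathrm{SO}(4)$ acts transitively on $\P_{S^3}$, since $\P_{S^3}$ is the unit tangent bundle and $\mathrm{SO}(4)$ acts transitively on orthonormal frames. So $g\mapsto g\cdot(x,[\xi])$ is a submersion onto $\P_{S^3}$. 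The $\lambda$-direction is tangent to $\hat S$ by construction. Hence the image of $d\Xi$ contains $T\P_{S^3}\oplus T\P_{S^3}\oplus\R\,\partial_\lambda$, which is the whole tangent space at points off $\mathcal M_1\cup\mathcal M_2$. This is actually easier than the Euclidean case, where one had to combine translations of the base point with $G$.

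By the parametric transversality theorem \cite{GP10}, for almost every $(g_1,g_2)$ the map $\Xi(\bullet,g_1,g_2)$ is transverse to $W$. Its image is the corresponding stratum of $N((g_1)_*\phi_1\boxtimes(g_2)_*\phi_2)$. Since there are finitely many strata by compactness, a countable intersection of full-measure sets gives the claim.

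The main obstacle is the bookkeeping near $\mathcal M_1\cup\mathcal M_2$. We must make sure that transversality is only needed on the image of $\bar{da}$, which lies in a compact region away from $\mathcal M_1\cup\mathcal M_2$ (both components of the pulled-back covector are nonzero). We must also confirm that, in the left-trivialized coordinates, the blown-up product strata are genuinely smooth and angular there, so that \Cref{main:pushforward} applies.
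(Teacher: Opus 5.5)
Your proposal is correct and follows essentially the same route as the paper. You use the blown-up product strata $\hat S$ on $\P_{S^3\times S^3}\setminus(\mathcal M_1\cup\mathcal M_2)$ and get angularity from the product lemma for angular stratifications. You show the map $\Xi$ is a submersion because the rotation group acts transitively on $\P_{S^3}$, and then apply Thom's parametric transversality. The only cosmetic difference is that the paper parametrizes $G\times G$ through the double cover $S^3\times S^3\to\mathrm{SO}(4)$, $(e,f)\mapsto(x\mapsto ex\bar f)$, acting by $L_eR_{\bar f}\otimes\Ad_f$ in left-trivialized coordinates, whereas you use $\mathrm{SO}(4)$ and its transitivity on the unit (co)sphere bundle directly.
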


\proof
The general strategy is as in Proposition \ref{prop_transversal_compact_valuations}, with some minor adjustments.

Identifying $S^3$ with unit quaternions, the 3-sphere~$H:=S^3$ is a Lie group. We write $L_h,R_h:H \to H$ for left and right multiplication by $h$.  

Let us denote~$X=H \times H$ and~$Y=H$ and consider the multiplication map~$X \to Y$. We want to find an angular stratification of $N((g_1)_*\phi_1 \boxtimes (g_2)_*\phi_2)$ which is transversal to the submanifold~$X\times_Y\P_Y$.

Since left and right multiplications by elements of~$H$ are diffeomorphisms, we have~$X \times_Y \P_Y \subset \P_X \setminus (\mathcal M_1 \cup \mathcal M_2)$, so that it is enough to study the strata of~$N((g_1)_*\phi_1 \boxtimes (g_2)_*\phi_2)$ inside the open set~$\P_X \setminus (\mathcal M_1 \cup \mathcal M_2)$. 

To do so, we first proceed to some identifications. Using left translations, we can identify~$\P_H$ and~$H \times \P_+(\mathfrak h^*)$. Explicitly, an element~$(h,[\xi]) \in \P_H$ is mapped to~$(h,[dL_h^*\xi]) \in H \times \P_+(\mathfrak h^*)$. Moreover, using the metric, we may identify~$\P_+(\mathfrak h^*)$ and the unit sphere~$S^2 \subset \mathfrak h$, by mapping a vector~$v \in S^2$ to~$[\langle \bullet,v\rangle]$. As usual, the adjoint action of~$H$ on~$\mathfrak h$ is denoted by~$\Ad$, it preserves the scalar product and hence acts on~$S^2$.
Similarly, the bundle~$\P_{H \times H}$ can be identified with~$H \times H \times \P_+(\mathfrak h^* \oplus \mathfrak h^*)$, by mapping~$(h_1,h_2,[\xi_1:\xi_2])$ to~$(h_1,h_2,[dL_{h_1}^*\xi_1:dL_{h_2}^*\xi_2])$. Next~$\P_+(\mathfrak h^* \oplus \mathfrak h^*)$ can be identified with~$S^5$. 

Under our identifications, we have~$\P_X \setminus (\mathcal M_1 \cup \mathcal M_2)=H \times H \times S_{++}^5$ where:
\begin{displaymath}
S^5_{++}:=\left\{x \in S^5: (x_1,x_2,x_3) \neq 0, (x_4,x_5,x_6) \neq 0\right\}.
\end{displaymath}
This set can be described using the following diffeomorphism, called spherical join:
\begin{displaymath}
S^2 \times S^2 \times \left(0,\frac{\pi}{2}\right) \to S^5_{++}, \quad (v_1,v_2,\psi) \mapsto (\cos \psi v_1,\sin \psi v_2).
\end{displaymath}

Then, a stratification defining~$N(\phi_1 \boxtimes \phi_2)$ on the open set $H \times H \times S^5_{++}$ is given by the strata:
\begin{displaymath}
\hat S=\left\{(h_1,h_2,\cos \psi v_1,\sin \psi v_2) : (h_i,v_i) \in S_{i} \mbox{ for } i =1,2, \psi \in \left(0,\frac{\pi}{2}\right)\right\},
\end{displaymath}  
where~$S_{i}$ runs over all strata of angular stratifications of~$H \times S^2$ that respectively define~$N(\phi_i)$ for~$i=1,2$.

To get a stratification defining~$N((g_1)_*\phi_1 \boxtimes (g_2)_*\phi_2)$ on the open set~$H \times H \times S^5_{++}$, consider the group homomorphism~$H \times H \to G$, sending~$(e,f)$ to the map~$x \mapsto e x \bar f$. Its kernel is~$\{(1,1),(-1,-1)\}$ and~$H \times H$ is a double cover of~$G$. An element $(e,f) \in H^2$ acts on~$H \times S^2$ by~$L_eR_{\bar f} \otimes \Ad_f$, and the identification~$\P_H\cong H \times \P_+(\mathfrak h^*)$ is $(H \times H)$-equivariant for this action.
Then, considering a lift~$(e_i,f_i) \in H^2$ of~$g_i \in G$, the strata
\begin{equation} \label{eq_rotated_stratum}
    \begin{split}
        \Big\{\big(e_1h_1\bar f_1,e_2h_2\bar f_2,\cos \psi &\Ad_{f_1}v_1,\sin \psi \Ad_{f_2}v_2\big) :\\
        &(h_i,v_i) \in S_i \mbox{ for } i =1,2, \psi \in \left(0,\frac{\pi}{2}\right)\Big\}
    \end{split}
\end{equation}
define~$N((g_1)_*\phi_1 \boxtimes (g_2)_*\phi_2)$ on the open set~$H \times H \times S^5_{++}$.

Finally, we claim that the map~$\Xi:\hat S \times H^4 \to H \times H \times S^5_{++}$ sending
\begin{equation*}
    (h_1,h_2,(\cos \psi v_1,\sin \psi v_2), e_1,f_1,e_2,f_2)
\end{equation*}
to
\begin{equation*}
    (e_1h_1\bar f_1,e_2 h_2 \bar f_2, (\cos \psi \Ad_{f_1}v_1, \sin \psi \Ad_{f_2}v_2))
\end{equation*}
is transversal to $X \times_Y \P_Y$. Indeed, this map is a submersion, since the left action of $H$ on itself and the coadjoint action of $H$ on $\P_+(\mathfrak h^*)=S^2$ are both transitive. Therefore, Thom's transversality theorem \cite{GP10} ensures that for almost all $(e_1,f_1,e_2,f_2) \in H^4$, the subanalytic subset~$\Xi(\hat S,e_1,f_1,e_2,f_2)$, which is equal to the set in~\eqref{eq_rotated_stratum}, is transversal to $X \times_Y \P_Y$.

A similar argument as in the proof of~\Cref{prop_convolution_constructible_transitive} shows that the stratification given by~\eqref{eq_rotated_stratum} is an angular stratification.
\endproof

Since $G$ acts transitively on the unit sphere bundle of $S^3$, the space~$\mathcal{V}(S^3)^G$ of $G$-invariant smooth valuations on~$S^3$ is finite-dimensional. It is well-known that it is spanned by the Crofton valuations $\nu_0,\nu_1,\nu_2,\nu_3$.

Let~$c:\mathcal{V}(S^3)^G \otimes \mathcal{V}(S^3)^G \to \mathcal{V}(S^3)^G$ be the convolution product on~$S^3$. More precisely, for~$\zeta_1,\zeta_2 \in \mathcal{V}(S^3)^G$ the pushforward of the exterior product~$\zeta_1 \boxtimes \zeta_2 \in \mathcal{V}^{\infty}(S^3 \times S^3)$ under the multiplication map~$S^3 \times S^3 \to S^3$ is again a smooth valuation by \cite[Thm.~1]{AB17} that will be denoted by~$c(\zeta_1,\zeta_2)=\zeta_1 * \zeta_2$. 

Let us denote by $\PD^G$ the composition of the natural maps
\begin{displaymath}
\mathcal V(S^3)^G \hookrightarrow \mathcal V^\infty(S^3) \stackrel{\PD}{\hookrightarrow} \mathcal V^{-\infty}(S^3) \twoheadrightarrow (\mathcal V(S^3)^G)^*.
\end{displaymath}

This map is an isomorphism of vector spaces, moreover $(\PD^G)^*=\PD^G$, see \cite[Definition 2.14 and Corollary 2.18]{bernig_fu_solanes}.

\begin{prop} \label{prop_ftaig_s3}
There is a map $m:\mathcal{V}(S^3)^G \to \mathcal{V}(S^3)^G \otimes \mathcal{V}(S^3)^G$ such that for $\phi_1,\phi_2 \in \mathrm{CF}(S^3)$
    \begin{equation} \label{eq_kinematic operator}
        m(\mu)(\phi_1,\phi_2)=\int_G \mu(\phi_1 * g_*\phi_2) dg,
    \end{equation}	
    and the following diagram commutes
\begin{equation} \label{eq_ftaig_s3}
        \begin{tikzcd}
        \mathcal{V}(S^3)^G \arrow[r, "m"] \arrow[d, "\PD^G"] & \mathcal{V}(S^3)^G \otimes \mathcal{V}(S^3)^G  \arrow[d, "\PD^G \otimes \PD^G"] \\
        \mathcal{V}(S^3)^{G*} \arrow[r, "c^*"] & \mathcal{V}(S^3)^{G*} \otimes \mathcal{V}(S^3)^{G*}                                      
    \end{tikzcd}.
\end{equation}
\end{prop}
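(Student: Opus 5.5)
We will mirror the proof of the additive kinematic formula in the flat case, with the map $M$ replaced by pairing with $G$-invariant smooth valuations on $S^3$.

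\emph{Defining $m$.} Since $\PD^G$ is an isomorphism, we take $m$ to be the unique map making \eqref{eq_ftaig_s3} commute:
\begin{displaymath}
m:=(\PD^G\otimes\PD^G)^{-1}\circ c^*\circ\PD^G .
\end{displaymath}
This is well defined because $\mathcal V(S^3)^G$ is finite-dimensional. It remains to verify \eqref{eq_kinematic operator}.

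\emph{Reduction to a transverse convolution.} We may take $\phi_1,\phi_2$ compactly supported, which is automatic since $S^3$ is compact. First, use $G$-invariance of $\mu$ and invariance of the Haar probability measure to rewrite
\begin{displaymath}
\int_G\mu(\phi_1*g_*\phi_2)\,dg=\int_{G\times G}\mu\big((g_1)_*\phi_1*(g_2)_*\phi_2\big)\,dg_1dg_2 .
\end{displaymath}
Here we use that the convolution on $H=S^3$ is $G$-equivariant in the appropriate sense. Write $g_i$ as $x\mapsto e_ix\bar f_i$. Then $(g_1)_*\phi_1*(g_2)_*\phi_2$ is the pushforward of $\phi_1*(\bar f_1 e_2\,\cdot\,\bar f_2)_*\phi_2$ under $x\mapsto e_1x$. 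Hence the integrand depends, up to applying an element of $G$ (under which $\mu$ is invariant), only on a single group element, and averaging over $G\times G$ gives the same result as averaging over $G$.

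By \Cref{prop_transversality_sphere}, for almost all $(g_1,g_2)$ the pair satisfies the hypotheses of \Cref{cor:convolution-CFgval}. For such pairs,
\begin{displaymath}
\mu\big((g_1)_*\phi_1*(g_2)_*\phi_2\big)=\big\langle \mu,[(g_1)_*\phi_1]*[(g_2)_*\phi_2]\big\rangle .
\end{displaymath}

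\emph{Dualising the convolution.} The pushforward $a_*$ is adjoint to the pullback $a^*$, and the exterior product is compatible with pairing. So the right-hand side equals $\langle a^*\mu,[(g_1)_*\phi_1]\boxtimes[(g_2)_*\phi_2]\rangle$. Here $a^*\mu$ is a smooth valuation on $H\times H$ (pullback by a submersion, evaluated against a product of generalized valuations).

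Now average over $G\times G$. Averaging $a^*\mu$ against $G\times G$ projects it onto the $(G\times G)$-invariant part. Write $\pi:\mathcal V^\infty\to\mathcal V(S^3)^G$ for the averaging projection. Since smooth $G$-invariant valuations on $S^3$ are finite-dimensional, we expect
\begin{displaymath}
\int_{G\times G}(g_1,g_2)^*a^*\mu\;dg_1dg_2=\sum_{k,l}d^{\mu}_{k,l}\,\nu_k\boxtimes\nu_l
\end{displaymath}
for some constants $d^{\mu}_{k,l}$. Evaluating on $\phi_1\boxtimes\phi_2$ via \Cref{prop:exterior-product-CFgval} then gives $\sum d^\mu_{k,l}\nu_k(\phi_1)\nu_l(\phi_2)$.

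\emph{Identifying the coefficients.} Pair both sides with smooth $G$-invariant valuations $\zeta_1,\zeta_2$ in place of constructible functions. This is legitimate by density and continuity, or directly since the same computation applies to smooth valuations. By definition of $c$, the pairing is $\langle\mu,\zeta_1*\zeta_2\rangle=\langle c^*\PD^G\mu,\zeta_1\otimes\zeta_2\rangle$. The coefficients $d^\mu_{k,l}$ are therefore exactly the coefficients of $(\PD^G\otimes\PD^G)^{-1}c^*\PD^G\mu=m(\mu)$, using $(\PD^G)^*=\PD^G$.

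\emph{Main obstacle.} The delicate step is justifying that the averaged smooth valuation $\int(g_1,g_2)^*a^*\mu$ lies in $\mathcal V(S^3)^G\otimes\mathcal V(S^3)^G$ and that it may be paired termwise with generalized valuations of constructible functions. This requires exchanging the $G\times G$ integral with the pairing, which needs measurability and boundedness of the integrand in $(g_1,g_2)$. We would first try to argue this by writing everything as integration of fixed smooth forms over the subanalytic currents $N(\phi_i)$ moved by $g_i$, and applying Fubini.
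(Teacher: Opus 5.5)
\textbf{There is a genuine gap.} Your definition of $m$ and your reduction from $\int_G$ to $\int_{G\times G}$ are fine. The central step, however, is not a measurability or Fubini technicality. The identity $\int_{G\times G}(g_1,g_2)^*a^*\mu\,dg_1\,dg_2=\sum_{k,l}d^\mu_{k,l}\,\nu_k\boxtimes\nu_l$ is false in general. Finite-dimensionality of $\mathcal V(S^3)^G$ says nothing about it, because $G\times G$ does not act transitively on $\P_{S^3\times S^3}$: the ratio $|\xi_1|:|\xi_2|$ is invariant for the bi-invariant metric, so invariant valuations on $S^3\times S^3$ are not spanned by exterior products.

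To see why the identity breaks, note first that $a^*\mu$ is not smooth; the paper remarks that pullbacks of smooth valuations by submersions are only generalized. Its Legendrian current $\overline{da}_*\overline{\tau}^*T$ lives on $\overline{da}(X\times_Y\P_Y)$. Since $da^*\xi=(\xi\circ dR_{h_2},\xi\circ dL_{h_1})$ has two components of equal length, the $G\times G$-average is still concentrated on the hypersurface $\{|\xi_1|=|\xi_2|\}$. By contrast, any combination of the $\nu_k\boxtimes\nu_l$ has a smooth current away from $\mathcal M_1\cup\mathcal M_2$. So the two sides cannot coincide unless that averaged current vanishes. They do agree on product sets, but that agreement is exactly the kinematic formula you are trying to prove.

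The simplest flat analogue makes this explicit. Take $\R$ acting on itself, $G=\mathrm O(1)$ and $\mu=\vol$. The average is $K\mapsto\frac12\bigl(|\{x+y:(x,y)\in K\}|+|\{x-y:(x,y)\in K\}|\bigr)$. Meanwhile $m(\vol)=\vol\otimes\chi+\chi\otimes\vol$ corresponds to $K\mapsto|p_x(K)|+|p_y(K)|$. Both give $|I_1|+|I_2|$ on rectangles $I_1\times I_2$, but on the triangle with vertices $(0,0),(1,0),(0,1)$ they give $3/2$ and $2$ respectively.

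\textbf{The fix} is to average the constructible functions rather than the test valuation, which is what the paper does. Set $\zeta_i:=\int_G[g_*\phi_i]\,dg$. These are $G$-invariant generalized valuations, hence smooth because $G$ acts transitively on $\P_{S^3}$, so $\zeta_i\in\mathcal V(S^3)^G$. By $G$-invariance of $\mu$, one has $\langle\PD^G(\mu),\zeta_i\rangle=\mu(\phi_i)$. After your reduction, apply \Cref{cor:convolution-CFgval} together with \Cref{prop_transversality_sphere}. Bilinearity of the convolution then turns $\int_{G\times G}\langle\mu,[(g_1)_*\phi_1]*[(g_2)_*\phi_2]\rangle\,dg_1\,dg_2$ into $\langle\mu,\zeta_1*\zeta_2\rangle=\langle c^*\circ\PD^G(\mu),\zeta_1\otimes\zeta_2\rangle=\langle(\PD^G\otimes\PD^G)\circ m(\mu),\zeta_1\otimes\zeta_2\rangle$. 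By the previous identity this equals $m(\mu)(\phi_1,\phi_2)$. Your coefficient-identification computation is essentially this last step. It must be applied to the invariant averages $\zeta_i$ of the $\phi_i$, not to a decomposition of the averaged $a^*\mu$. No structure result for $(G\times G)$-invariant valuations on $S^3\times S^3$ is needed.
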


\proof
Since $\PD^G$ is an isomorphism, we may define $m$ by \eqref{eq_ftaig_s3}. Let us check that \eqref{eq_kinematic operator} is satisfied. 

Let $\phi_1,\phi_2 \in \mathrm{CF}(S^3)$ and $\mu \in \mathcal V(S^3)^G$. The generalized valuations $\zeta_i:=\int_G [g_*\phi_i] dg$ are $G$-invariant, hence smooth, i.e. $\zeta_i \in \mathcal V(S^3)^G$. Since $\mu$ and $\zeta_i$ are $G$-invariant we have  
\begin{equation} \label{eq_value_pdg}
\langle \PD^G(\mu),\zeta_i\rangle=\langle \PD(\mu),\zeta_i\rangle=\left\langle \mu, \int_G [g_*\phi_i]dg\right\rangle=\int_G \mu(g_*\phi_i)dg=\mu(\phi_i).
\end{equation}

We now compute:
\begin{align*}
\int_G \mu(\phi_1 * &g_*\phi_2) dg = \int_{G \times G} \mu((g_1)_*\phi_1 * (g_2)_*\phi_2) dg_1 dg_2 \quad \text{($\mu$ is left-invariant)}\\
& = \int_{G \times G} \langle \mu, [(g_1)_*\phi_1 * (g_2)_*\phi_2]\rangle dg_1 dg_2\\
    & = \int_{G \times G} \langle \mu, [(g_1)_*\phi_1] * [(g_2)_*\phi_2]\rangle dg_1 dg_2 \quad \text{(Cor.~\ref{cor:convolution-CFgval}, Prop.~\ref{prop_transversality_sphere})}\\
& = \left\langle \mu, \int_G [(g_1)_*\phi_1] dg_1 * \int_G [(g_2)_*\phi_2] dg_2\right\rangle\\
        & = \langle \PD^G(\mu), \zeta_1 * \zeta_2\rangle \quad \text{($\mu$ and $\zeta_1 * \zeta_2$ are $G$-invariant)}\\
    & = \langle c^* \circ \PD^G(\mu), \zeta_1 \otimes \zeta_2\rangle \\
    & = \langle (\PD^G \otimes \PD^G) \circ m(\mu), \zeta_1 \otimes \zeta_2\rangle \\
& = m(\mu)(\phi_1,\phi_2) \quad \text{by \eqref{eq_value_pdg}},
\end{align*}
which proves \eqref{eq_kinematic operator}. 
\endproof

There are two ways to obtain the explicit formulas given in Theorem \ref{thm_additive_s3}. The algebra of invariant valuations on $S^3$ with respect to the convolution product was computed in \cite{bernig_faifman_kotrbaty}, which can be translated into kinematic formulas by using \Cref{prop_ftaig_s3}. Here we follow a more direct approach.   

\proof[Proof of Theorem \ref{thm_additive_s3}]
Let $f_i(r):=\nu_i(B(1,r))$, where $B(1,r)$ is a closed geodesic ball of radius $r<\frac{\pi}{2}$ and center the identity $1 \in S^3$. Then $f_i(r)$ is the normalized volume of the $r$-tube around a totally geodesic submanifold of dimension $3-i$, and using normal coordinates around the submanifold, one finds that 
\begin{align*}
f_0(r) & =1,\\
f_1(r) & =\frac{4}{\pi} \int_0^r \cos^2(t)dt=\frac{2(\cos(r)\sin(r)+r)}{\pi},\\
f_2(r) & =2 \int_0^r \sin(t) \cos(t) dt=\sin^2(r),\\
f_3(r) & =\frac{2}{\pi} \int_0^r \sin(t)^2 dt=\frac{r-\cos(r)\sin(r)}{\pi}. 
\end{align*} 

By~\Cref{prop_ftaig_s3} we know that there are constants $d^i_{k,l}$ such that 
\begin{displaymath}
\int_G \nu_i(\phi_1 * g_*\phi_2)dg=\sum d^i_{k,l} \nu_k(\phi_1)\nu_l(\phi_2), \quad \phi_1,\phi_2 \in \mathrm{CF}(S^3).
\end{displaymath}
We take $\phi_1=\1_{B(1,r)}, \phi_2=\1_{B(1,s)}$ with $r+s< \frac{\pi}{2}$. Then for every $g\in G$ we have~$\phi_1* g_*\phi_2=\1_{B(g,r+s)}$. We thus must have 
\begin{displaymath}
f_i(r+s)=\sum d_{k,l}^i f_k(r)f_l(s).
\end{displaymath}

Solving this system of equations gives us the formulas stated in the theorem.  
\endproof

\bibliographystyle{plain}
\bibliography{bibliography}

\end{document}

%% file: preambule.tex
\usepackage{fontenc}
\usepackage[utf8]{inputenc}
\usepackage{amssymb, amsmath, amsthm}
\usepackage[colorlinks = true, urlcolor = blue, linkcolor=red, citecolor=blue]{hyperref}
\usepackage{cleveref}
\usepackage{graphicx}
\usepackage{tikz-cd}
\usepackage{enumitem} 
\usepackage{stmaryrd} 
\usepackage{mathtools}

\usepackage{xparse} 

\usepackage[textsize=tiny]{todonotes}

\numberwithin{equation}{section}

\setlist[itemize]{label=\tiny\textbullet}
\setlist[enumerate]{label=(\roman*)}

\theoremstyle{plain}
\newtheorem{thm}{Theorem}[section]
\newtheorem*{thm*}{Theorem}

\newtheorem{lem}[thm]{Lemma}
\newtheorem{cor}[thm]{Corollary}
\newtheorem{prop}[thm]{Proposition}

\theoremstyle{definition}
\newtheorem{defi}[thm]{Definition}

\newtheorem{ex}[thm]{Example}
\newtheorem{rk}[thm]{Remark}

\theoremstyle{remark}

\Crefname{lem}{Lemma}{Lemmas}
\Crefname{thm}{Theorem}{Theorems}

\def\Z{{\mathbb Z}}    
\def\R{{\mathbb R}}    
\def\C{{\mathbb C}}    
\def\P{\mathbb{P}}

\def\d{\,\mathrm{d}}

\DeclareMathOperator{\id}{id}
\DeclareMathOperator{\supp}{supp}

\DeclareMathOperator{\Val}{Val}
\DeclareMathOperator{\Ad}{Ad}
\DeclareMathOperator{\Geod}{Geod}

\newcommand{\rightarrowdbl}{\rightarrow\mathrel{\mkern-14mu}\rightarrow}

\renewcommand{\tilde}[1]{\widetilde{#1}}
\makeatletter
\newcommand{\doublewidetilde}[1]{{%
  \mathpalette\double@widetilde{#1}%
}}
\newcommand{\double@widetilde}[2]{%
  \sbox\z@{$\m@th#1\widetilde{#2}$}%
  \ht\z@=.8\ht\z@
  \widetilde{\box\z@}%
}
\makeatother

\renewcommand{\bar}[1]{\overline{#1}}
\renewcommand{\hat}[1]{\widehat{#1}}
\let\oldphi\phi 
\renewcommand{\phi}{\varphi} 
\newcommand{\dualdot}[1]{\left\langle#1\right\rangle}
\DeclareMathOperator{\Int}{Int}
\newcommand{\closure}[1]{\overline{#1}}

\def\base{X} 
\def\pt{\mathrm{pt}}

\def\dEuler{\d\chi}
\def\cpt{\mathrm{c}}

\def\1{{\mathbf{1}}}    

\NewDocumentCommand{\CF}{O{} O{\base}}{%
\mathrm{CF}_{#1}(#2)%
}

\DeclareMathOperator{\CC}{CC}

\newcommand{\basecur}[1]{C_{#1}}

\NewDocumentCommand{\emCFgval}{d[]}{
\IfNoValueTF{#1}{\def\argument{[-]}}{\def\argument{\left[#1\right]}}%
\argument%
}

\def\PD{\mathrm{PD}}
\def\del{\partial}
\NewDocumentCommand{\lfc}{O{T^*X} O{n}}{
C^\mathrm{BM}_{#2}(#1)%
}
\NewDocumentCommand{\BM}{O{T^*X} O{n}}{
H^\mathrm{BM}_{#2}(#1)%
}
\def\scurBMnotation{\Psi}
\NewDocumentCommand{\scurBM}{O{M} O{}}{
\scurBMnotation_{#1}^{#2}%
}

\newcommand{\CLag}[1]{\mathcal{CL}(#1)}
\newcommand{\Lag}[1]{\mathcal{L}(#1)}

\NewDocumentCommand{\forms}{O{\base} O{} m}{%
\Omega_{#2}^{#3}(#1)%
}

\def\zero{\underline{0}}

\newcommand{\submfd}[1][\base]{\mathcal{P}(#1)}

\def\valsymb{\mathcal{V}}
\NewDocumentCommand{\val}{O{} O{\base}}{%
\valsymb_{#1}^{\infty}(#2)%
}
\NewDocumentCommand{\gval}{O{\base}}{%
\valsymb^{-\infty}(#1)%
}

\def\lettercurspace{\mathcal{D}}
\NewDocumentCommand{\cur}{m O{\base}}{%
\lettercurspace_{#1}(#2)
}
\newcommand{\intcur}[1]{\llbracket#1\rrbracket}

\NewDocumentCommand{\scur}{m O{\base}}{%
\lettercurspace^{\mathrm{sub}}_{#1}\!\left(#2\right)%
}

\DeclareMathOperator{\WF}{WF}
\DeclareMathOperator{\Dens}{Dens}
\DeclareMathOperator{\vol}{vol}

\DeclareMathOperator{\cone}{cone}
